\documentclass[11pt,a4paper]{article}
\usepackage[utf8]{inputenc}
\usepackage{amsmath}
\usepackage{amsfonts}
\usepackage{amssymb}
\usepackage{amsthm}
\usepackage{hyperref}
\usepackage{enumitem}
\usepackage{xcolor}
\usepackage[left=3cm,right=3cm,top=3cm,bottom=3cm]{geometry}

\usepackage{tikz}
\usetikzlibrary{calc}

\tikzstyle{vertex}=[circle,draw, top color=gray!5, 
	    bottom color=gray!30, minimum size=10pt, scale=1, inner sep=0.5pt]

\tikzstyle{vertexsmall}=[circle,draw, top color=black, bottom color=black,
        minimum size=10pt, scale=1, inner sep=0.5pt]

\tikzstyle{vertexverysmall}=[circle,draw, top color=black, bottom color=black,
        minimum size=5pt, scale=1, inner sep=0.5pt]

\tikzstyle{edge}=[thick]

\newtheorem{theorem}{Theorem}
\newtheorem{corollary}[theorem]{Corollary}

\newtheorem{lemma}[theorem]{Lemma}

\newtheorem{claim}[theorem]{Claim}
\newtheorem{definition}[theorem]{Definition}
\newtheorem{problem}[theorem]{Problem}

\DeclareMathOperator{\bigO}{\mathcal{O}}
\DeclareMathOperator{\bigOmega}{\Omega}
\DeclareMathOperator{\dist}{\mathrm{dist}}
\newcommand{\wreach}{\mathrm{WReach}}
\newcommand{\wcol}{\mathrm{wcol}}
\newcommand{\diam}{\mathrm{diam}}

\providecommand{\noopsort}[1]{}

\title{Neighborhood complexity of planar graphs}

\author{Gwena\"el Joret\thanks{Supported by a CDR grant and a PDR grant from the Belgian National Fund for Scientific Research (FNRS).}\\
\small D\'epartement d'Informatique\\[-0.8ex]
\small Universit\'e libre de Bruxelles\\[-0.8ex] 
\small Brussels, Belgium\\
\small\tt gwenael.joret@ulb.be\\
\and
Clément Rambaud \\
\small D\'epartement d'Informatique\\[-0.8ex]
\small ENS Paris, PSL University\\[-0.8ex] 
\small Paris, France\\
\small Universit\'e Côte d'Azur\\[-0.8ex]
\small CNRS, Inria, I3S\\[-0.8ex] 
\small Sophia Antipolis, France\\
\small\tt clement.rambaud@inria.fr
}

\begin{document}

\maketitle

\begin{abstract}

Reidl, Sánchez~Villaamil, and Stravopoulos (2019) characterized graph classes of bounded expansion as follows: A class $\mathcal{C}$ closed under subgraphs has bounded expansion if and only if there exists a function $f:\mathbb{N} \to \mathbb{N}$ such that for every graph $G \in \mathcal{C}$, every nonempty subset $A$ of vertices in $G$ and every nonnegative integer $r$, the number of distinct intersections between $A$ and a ball of radius $r$ in $G$ is at most $f(r) |A|$. 
When $\mathcal{C}$ has bounded expansion, the function $f(r)$ coming from existing proofs is typically exponential. 
In the special case of planar graphs, it was conjectured by Soko{\l}owski (2021) that $f(r)$ could be taken to be a polynomial.  

In this paper, we prove this conjecture: For every nonempty subset $A$ of vertices in a planar graph $G$ and every nonnegative integer $r$, the number of distinct intersections between $A$ and a ball of radius $r$ in $G$ is $\bigO(r^4 |A|)$. 
We also show that a polynomial bound holds more generally for every proper minor-closed class of graphs. 
\end{abstract} 

\section{Introduction}

Nešetřil and Ossona~de~Mendez~\cite{nesetril_grad_2008} introduced the notion of graph classes with bounded expansion as a way to capture graphs that are sparse in a robust way: Very informally, the graphs should not only have linearly many edges but also all their minors with `bounded depth' should have linearly many edges (where the bound depends on the depth). 
Nowadays, there are several characterizations of graph classes with bounded expansion. 
These classes have been characterized using generalized coloring numbers~\cite{zhu_coloring_2009}, low treedepth colorings~\cite{nesetril_grad_2008},  $p$-centered colorings~\cite{nesetril_grad_2008}, and quasi-wideness~\cite{D10}, to name a few.  
See the textbook by Nešetřil and Ossona~de~Mendez~\cite{NOdM-book} and their more recent survey~\cite{SurveyND} for an overview of this rich area. 

In this paper, we are interested in the following characterization of graph classes with bounded expansion, due to
Reidl, Sánchez~Villaamil and Stravopoulos~\cite{RSS19}:
A class $\mathcal{C}$ of graphs closed under taking subgraphs
has \emph{bounded expansion} if and only if there exists a function 
$f:\mathbb{N} \to \mathbb{N}$ such that for every graph $G \in \mathcal{C}$,
for every nonempty subset $A \subseteq V(G)$ of vertices in $G$ and 
every nonnegative integer $r$,
\[
|\{N^r[v] \cap A \mid v \in V(G)\}| \leq f(r) |A|.
\]
Here, $N^r[v]$ denotes the {\em ball of radius $r$ around $v$}, namely, the set of vertices at distance at most $r$ from $v$ in $G$.  
When the function $f$ exists, we say that the class $\mathcal{C}$ has {\em bounded neighborhood complexity}, and the minimum such function $f$ is called the \emph{neighborhood complexity} of $\mathcal{C}$. 
(We remark that in some papers a different terminology is used: `bounded neighborhood complexity' is called `linear neighborhood complexity' instead, emphasizing the linear dependency on $|A|$. 
In this paper, `linear neighborhood complexity' has thus a different meaning, namely that $f$ can be taken in $\bigO(r)$.)

Neighborhood complexity has algorithmic applications. In particular, it has been used in~\cite{eickmeyer_neighborhood_2016} to build a linear kernel for {\sc Distance-$r$-Dominating-Set} in graph classes with bounded expansion.

However, the general bound given by Reidl, Sánchez~Villaamil and Stravopoulos
in~\cite{RSS19} is exponential, and no effective bound
for neighborhood complexity of some particular classes, such as planar graphs,
was known before this work. 
Using the fact that the family of all balls in a planar graph has VC-dimension at most $4$~\cite{chepoi2007covering} in combination with the Sauer-Shelah Lemma (Lemma~\ref{lemma:sauer_shelah}), it is not difficult to show that  $|\{N^r[v] \cap A \mid v \in V(G)\}| = \bigO(|A|^4)$ holds for every planar graph $G$, every nonempty subset $A$ of vertices, and every nonnegative integer $r$; see  Corollary~\ref{corollary:bound_polynomial_in_A}. 
For planar graphs, the best known upper bound is due to 
Sokołowski~\cite{sokolowski_bounds_2021}, who showed that for every planar graph $G$, every nonempty subset $A$ of vertices, and every nonnegative integer $r$, 
\[
|\{N^r[v] \cap A \mid v \in V(G)\}| = \bigO(r^7|A|^3).
\]

Thus, a bound polynomial in $r$ can be achieved at the price of a cubic dependence on $|A|$, instead of a linear one. 
Sokołowski~\cite{sokolowski_bounds_2021} conjectured that the neighborhood complexity of planar graphs is polynomial, that is, there exists
a constant $c$ such that $|\{N^r[v] \cap A \mid v \in V(G)\}| = \bigO(r^c|A|)$.

\subsection*{Our contributions}

Our main contribution is a proof of the above conjecture, that planar graphs have polynomial neighborhood complexity.  

\begin{theorem}
    \label{thm:neighborhood_complexity_planar}
    The neighborhood complexity of planar graphs is in $\bigO(r^4)$. 
\end{theorem}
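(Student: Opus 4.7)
The plan is to combine the polynomial weak coloring numbers of planar graphs with a refined counting argument that avoids the exponential blow-up of the standard approach. By a theorem of van den Heuvel, Ossona de Mendez, Quiroz, Rabinovich, and Siebertz, planar graphs satisfy $\wcol_r(G) = \bigO(r^3)$; and the general approach of Reidl, S\'anchez Villaamil, and Stavropoulos turns a bound on $\wcol_{2r}$ into a bound on neighborhood complexity, but does so via an exponential count of ``distance types'' which gives only $\bigO(\exp(r^3))$ for planar graphs. The goal is to replace the type-counting step with a per-vertex-of-$A$ charging argument that is polynomial in $r$.

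First I would fix an ordering $\sigma$ of $V(G)$ achieving $\wcol_{2r}(G,\sigma) = \bigO(r^3)$. The starting observation, standard in this area, is that whenever $d_G(u,v) \leq r$ a shortest $uv$-path contains a $\sigma$-minimum vertex $w$ that lies in $\wreach_r(G,\sigma,u) \cap \wreach_r(G,\sigma,v)$; hence for any $a \in A$, membership $a \in N^r[v]$ is witnessed by some such common $w$ with $\dist(w,v)+\dist(w,a) \leq r$. Rather than classifying $v$ by the full distance function on $\wreach_r(G,\sigma,v)$, I would charge each profile to a triple $(a,w,i)$ where $a \in A$, $w \in \wreach_r(G,\sigma,a)$, and $i = \dist(w,a) \in \{0,\dots,r\}$. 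For each fixed $a$ there are $|\wreach_r(G,\sigma,a)| \cdot (r+1) = \bigO(r^4)$ such triples, suggesting the target bound $\bigO(r^4\,|A|)$, provided one can show that the profile $N^r[v] \cap A$ is determined by a small number of such triples and that distinct profiles can be injected into a suitable collection of ``witness configurations''.

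Step three, the reconstruction step, is where I would exploit planarity beyond the weak-coloring bound. Here the plan is to use the fact that balls in a planar graph have VC-dimension at most $4$ (Chepoi, Estellon, Vaxès), combined with the Sauer--Shelah lemma (Lemma~\ref{lemma:sauer_shelah}), applied \emph{locally} to the set of vertices that share a common witness $w$: within this local family the VC-dimension bound limits the number of distinct intersection patterns with $A$ to a polynomial in the local set size, while the global sum over witnesses $w$ contributes the extra $\wcol$-factor. Multiplying the $\bigO(r^3)$ weakly reachable vertices per $a$ by the $\bigO(r)$ possible distance labels yields the announced $\bigO(r^4\,|A|)$.

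The main obstacle is the reconstruction step: showing that the per-$a$ witness data suffices to determine $N^r[v] \cap A$ globally, rather than only locally, without reintroducing an exponential count. The delicate point is that the same $v$ may receive witnesses through many different vertices of $A$ simultaneously, so independence between the charges must be argued by a combinatorial or topological property of planar graphs (for instance, via an interaction between the weak coloring order and a BFS/product-structure layering). If this independence cannot be proved directly, I would fall back on the planar product structure theorem, reducing to bands of $\bigO(r)$ consecutive BFS layers of bounded-treewidth type and proving an $\bigO(r)$-per-layer bound there; matching the $\bigO(r^4)$ target in this alternative route again requires carefully avoiding an exponential count in the width of the local tree decomposition.
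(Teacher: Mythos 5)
Your strategy---polynomial weak coloring numbers plus a VC-dimension/Sauer--Shelah finish---is essentially the paper's argument for general $K_t$-minor-free graphs (Theorem~\ref{thm:NC_Kt_Minor_free}), and the paper explicitly records what it yields for planar graphs: even using the optimal $\wcol_{2r}(G)=\bigO(r^3)$ bound, one obtains only $\bigO(r^{16})$, far from $\bigO(r^4)$. The loss is structural in this framework: the profile of $v$ on $A$ is reconstructed from its profile on a guarding set $\wreach_{2r}[G,<,\phi(v)]$ of size $\Theta(r^3)$, and Corollary~\ref{corollary:bound_polynomial_in_A_profiles} then costs $(r+1)^4|S|^4$ per guard. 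Your proposed shortcut---charging each profile to a triple $(a,w,i)$ and counting $\bigO(r^4)$ triples per $a\in A$---is not an injection: a vertex $v$ realizes one witness triple for \emph{each} $a\in N^r[v]\cap A$ simultaneously, and its profile is determined only by the whole collection of these triples, so a priori the count is over \emph{sets} of triples, which is exponential. This is exactly the step you flag as ``the main obstacle,'' and it is where the argument breaks. The local-VC idea does not repair it, because the set of vertices of $A$ reachable through a fixed witness $w$ can be all of $A$, so Sauer--Shelah applied there gives a bound polynomial in $|A|$, not in $r$; to get a bound polynomial in $r$ one must first pass to a separator of size $\mathrm{poly}(r)$, which is precisely what reinstates the $\bigO(r^{16})$-type loss.

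The paper's $\bigO(r^4)$ proof uses entirely different ingredients, none of which appear in your outline: (1) Soko{\l}owski's reduction (Lemma~\ref{lemma:reduction_outerface}), which cuts the plane open along a spanning tree so that $A$ becomes the vertex set of the outerface of a plane graph $G'$ with $|A'|\leq 2(2r+1)|A|$; (2) a tripod decomposition built via Sperner's lemma from a spanning forest of height at most $r$ rooted in $A'$ (Lemma~\ref{lemma:structural_lemma_best_bound_planar}), which splits the disc into $\bigO(|A'|/r)$ regions, each bounded by a cycle of length $\bigO(r)$; and (3) the Li--Parter theorem (Theorem~\ref{theorem:base_bound_Li_Parter}), which bounds the number of profiles on the $\bigO(r)$ boundary vertices of each region by $\bigO(r\cdot r^3)=\bigO(r^4)$. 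Summing over the $\bigO(|A'|/r)$ regions via Lemma~\ref{lemma:covers_and_cuts} gives $\bigO(r^3|A'|)=\bigO(r^4|A|)$. Your fallback via the product structure theorem is also carried out in the paper, but it gives only $\bigO(r^{11})$ (Theorem~\ref{theorem:NC_planar_degree_16}); without the outerface reduction and the Li--Parter input, neither of your routes reaches $\bigO(r^4)$.
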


Actually this upper bound holds for the slightly more general notion of `profile complexity':

\begin{theorem}
    \label{thm:profile_complexity_planar}
    The profile complexity of planar graphs is in $\bigO(r^4)$. 
\end{theorem}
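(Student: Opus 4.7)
The $r$-profile of a vertex $v$ with respect to $A$ is presumably the function $\pi_v\colon A\to\{0,1,\ldots,r,\infty\}$ sending $a$ to $d_G(v,a)$ if $d_G(v,a)\le r$ and to $\infty$ otherwise, and the profile complexity of $G$ counts the distinct such profiles. Since $N^r[v]\cap A$ is recovered from $\pi_v$ by thresholding, Theorem~\ref{thm:profile_complexity_planar} implies Theorem~\ref{thm:neighborhood_complexity_planar}, and it suffices to target the former.

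The plan is to combine weak coloring numbers with a planarity-specific structural argument. Fix an ordering $\preceq$ of $V(G)$ achieving $\wcol_r(G,\preceq)=\bigO(r^3)$, which exists for planar graphs by van~den~Heuvel et al., and define the signature $\mathrm{sig}(v):=\bigl(w\mapsto d_G(v,w)\bigr)_{w\in\wreach_r[v]}$. By the classical fact that any shortest path of length at most $r$ between two vertices contains a vertex in the intersection of their weak $r$-reaches, the profile $\pi_v$ is determined by $\mathrm{sig}(v)$ together with the $A$-dependent lookup table $\{(a,w,d_G(a,w)):a\in A,\,w\in\wreach_r[a]\}$. Consequently, the profile complexity is bounded by the number of distinct \emph{restricted} signatures $\mathrm{sig}(v)|_W$, where $W:=\bigcup_{a\in A}\wreach_r[a]$ has size at most $\wcol_r(G)\cdot|A|=\bigO(r^3|A|)$. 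So it suffices to bound the number of distinct restricted signatures by $\bigO(r\cdot|W|)=\bigO(r^4|A|)$.

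The structural heart of the argument is this final count. In an arbitrary bounded-expansion class, naive counting of restricted signatures is exponential in $r$ (each restricted signature is a partial function on $W$ with support of size $\wcol_r=\bigO(r^3)$ and $r+1$ possible values), and the Sauer--Shelah route (Corollary~\ref{corollary:bound_polynomial_in_A}) only brings the dependence on $|A|$ down to polynomial, not linear. The gain must come from genuine use of planarity. I expect this to be the main obstacle, and my plan would be to exploit a planar structural decomposition — such as the product structure theorem of Dujmović et al., layered treewidth, or nested cycle separators — to constrain how $\wreach_r[v]$ and its associated distances can vary as $v$ moves through $G$, and then to argue via a charging scheme that each element added to $W$ (in a suitable order determined by the decomposition) contributes only $\bigO(r)$ new possible restricted signatures, with the factor $r$ coming from the $r+1$ possible distance values.
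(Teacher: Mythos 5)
There is a genuine gap: your argument stops exactly where the difficulty begins. The first two paragraphs (reduce the profile on $A$ to the ``restricted signature'' on $W=\bigcup_{a\in A}\wreach_r[a]$, with $|W|=\bigO(r^3|A|)$ via the polynomial weak coloring numbers of planar graphs) are sound and are essentially the strategy the paper uses for \emph{general} $K_t$-minor-free classes in Section~\ref{section:proper_minor_closed}; but there the count of restricted signatures is finished with a VC-dimension/Sauer--Shelah argument on a small guarding set, which for planar graphs only yields $\bigO(r^{16})$. Your final claim --- that a decomposition-guided charging scheme shows each element of $W$ contributes only $\bigO(r)$ new restricted signatures --- is the entire theorem, and you give no mechanism for it: no decomposition is fixed, no ordering of $W$ is specified, and no reason is given why the number of signatures could not multiply (rather than add $\bigO(r)$) when a new element of $W$ is revealed. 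A restricted signature is a partial function on $W$ with support of size up to $\wcol_r(G)=\Theta(r^3)$ and $r+1$ values per coordinate, so without a concrete structural argument the count is a priori exponential in $r$, as you yourself note.

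For comparison, the paper's proof of the $\bigO(r^4)$ bound does not go through weak coloring numbers at all. It first applies Soko\l{}owski's cutting argument (Lemma~\ref{lemma:reduction_outerface}) to reduce to a plane graph $G'$ whose outerface carries the set $A'$ with $|A'|=\bigO(r|A|)$; it then uses a Sperner-lemma-based tripod decomposition (Lemma~\ref{lemma:structural_lemma_best_bound_planar}) to partition the disk into $\bigO(|A'|/r)$ regions, each bounded by a cycle of length $\bigO(r)$; and finally it applies the Li--Parter bound $\bigO(r|B|^3)$ for a set $B$ of consecutive vertices on a face boundary (Theorem~\ref{theorem:base_bound_Li_Parter}) to each region with $|B|=\bigO(r)$, giving $\bigO(r^4)$ per region and $\bigO(r^4\cdot|A'|/r)=\bigO(r^4|A|)$ in total. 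If you want to salvage your plan, the missing piece is precisely a planar-specific tool of Li--Parter type (or the VC-dimension-$3$ bound for boundary sets, Lemma~\ref{lemma:VC_dim_outer_face_at_most_3}) applied after a decomposition into $\bigO(r)$-size boundary pieces; the weak-coloring reduction alone does not get you there.
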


{\em Profile complexity} is a refined version of neighborhood complexity, which we now introduce.
The {\em profile} of a vertex $u$ on a set $A$ of vertices at distance $r$
is the $|A|$-tuple of all the distances $\dist(u,a)$ for $a \in A$, where values larger than $r$ are replaced by $+\infty$.
Since the profile of $u$ on $A$ at distance $r$ determines the intersection $N^r[u] \cap A$, the number of distinct profiles at distance $r$ on $A$ is an upper bound on $|\{N^r[u] \cap A \mid u \in V(G)\}|$. 
In fact, most upper bounds on $|\{N^r[u] \cap A \mid u \in V(G)\}|$ in the literature are actually obtained by bounding the number of distinct profiles at distance $r$ on $A$.
This motivates the introduction of the notion of \emph{profile complexity} of a class $\mathcal{C}$ of graphs, defined as the minimum function $f$ (if it exists) such that for every $G \in \mathcal{C}$, for every nonempty set $A \subseteq V(G)$ and for every nonnegative integer $r$,
the number of distinct profiles on $A$ at distance $r$ is at most $f(r)|A|$.
Furthermore, under mild conditions on $\mathcal{C}$, a class $\mathcal{C}$ of graphs with bounded neighborhood complexity also has bounded profile complexity (c.f.\ Lemma~\ref{lemma:from_balls_to_profiles}).

As it turns out, the upper bound in Theorem~\ref{thm:profile_complexity_planar} is almost tight. 
In Section~\ref{section:lower_bounds}, we present an $\Omega(r^3)$ lower bound.

A second contribution of this paper is a proof that every proper minor-closed class of graphs has polynomial profile complexity, and thus polynomial neighborhood complexity. 

\begin{theorem}
    \label{thm:profile_complexity_excluded_minor} 
    For every integer $t\geq 1$ the profile complexity of $K_t$-minor-free graphs is in $\bigO_t(r^{t^2-1})$. 
\end{theorem}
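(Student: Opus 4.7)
The plan is to adapt the argument underlying Theorem~\ref{thm:profile_complexity_planar} to $K_t$-minor-free graphs, using in place of the planar weak-coloring bound $\wcol_r(G) = \bigO(r^3)$ the more general bound $\wcol_r(G) = \bigO_t(r^{t-1})$ established by van den Heuvel, Ossona~de~Mendez, Quiroz, Rabinovich, and Siebertz for $K_t$-minor-free graphs.

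Given such a $G$, a nonempty $A \subseteq V(G)$, and $r \ge 0$, the first step is to fix a linear ordering $\sigma$ of $V(G)$ witnessing $\wcol_{2r}(G) \le c_t\,r^{t-1}$ for a constant $c_t$ depending only on $t$. The key observation is that for any $v \in V(G)$ and any $a \in A$ at distance at most $r$, every shortest $v$-$a$ path has a $\sigma$-minimum vertex $m$ in $\wreach^\sigma_r(v) \cap \wreach^\sigma_r(a)$, so that $\dist(v,a) = \dist(v,m) + \dist(m,a)$. Consequently, setting $W := \bigcup_{a \in A} \wreach^\sigma_r(a)$, which has size $\bigO_t(r^{t-1})\cdot|A|$, the profile of each $v$ on $A$ at distance $r$ is entirely determined by the pair $\bigl(\wreach^\sigma_r(v) \cap W,\; \dist(v,\cdot)|_{\wreach^\sigma_r(v) \cap W}\bigr)$.

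The core combinatorial task is then to bound the number of distinct such anchored distance patterns. A naive enumeration --- pick a subset of $W$ of size at most $\wcol_r(G)$, then assign each element a distance in $\{0,\dots,r\}$ --- only yields a polynomial of degree $\wcol_r(G) = \bigO_t(r^{t-1})$ in $|W|$, hence polynomial but far from linear in $|A|$. To recover the linear dependency, the plan is to charge each pattern to an \emph{anchor} $m \in W$ (for example the $\sigma$-minimum element of $\wreach^\sigma_r(v) \cap W$), and then, for each fixed anchor, to apply the weak-coloring-number bound a second time to control the number of distinct patterns rooted there. Multiplying $\bigO_t(r^{t-1})|A|$ choices for the anchor by $\bigO_t(r^{t-1})$ patterns per anchor, and absorbing one further factor of $r$ for the distance encoding, should produce the claimed upper bound $\bigO_t(r^{t^2-1})|A|$, the exponent matching the factorization $t^2 - 1 = (t-1)(t+1)$.

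The main obstacle will be making this iterated charging precise: naive exponentiation of $\wcol$ only gives a bound exponential in $r$, so polynomial profile complexity requires reusing the weak-reachability trick at two nested scales in a way that never double-counts. This is exactly the step where the planar proof of Theorem~\ref{thm:profile_complexity_planar} exploits plane embeddings to obtain the sharper exponent $4$; in the general minor-closed setting, the argument must instead absorb an extra factor of $\wcol$, leading to the larger but still polynomial exponent $t^2-1$.
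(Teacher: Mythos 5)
Your setup is the same as the paper's: take an ordering $<$ witnessing $\wcol_{2r}(G)=\bigO_t(r^{t-1})$ (Theorem~\ref{thm:wcol_Kt_minor_free}), form $W=\bigcup_{a\in A}\wreach_r[G,<,a]$ of size $\bigO_t(r^{t-1})|A|$, and observe that the profile of $v$ on $A$ is determined by its distances to $\wreach_r[G,<,v]\cap W$. The gap is in the counting step. Your plan to ``apply the weak-coloring-number bound a second time'' to get only $\bigO_t(r^{t-1})$ distance patterns per anchor is not an argument: the set of vertices whose patterns you must count per anchor has size $\bigO_t(r^{t-1})$, and a priori the number of distance patterns on a set of size $s$ is $(r+2)^s$, i.e.\ exponential in $r$ --- this is exactly the costly enumeration in the proof of Reidl et al.\ that the paper says must be avoided, and nothing about weak reachability by itself collapses it to $\bigO_t(r^{t-1})$. (If your per-anchor count were correct, you would get $\bigO_t(r^{2t-1})|A|$, far stronger than the theorem and essentially resolving the paper's second open problem; note also that your arithmetic $r^{t-1}\cdot r^{t-1}\cdot r$ gives exponent $2t-1$, which equals $t^2-1$ only for $t=2$, so the proposal does not actually produce the claimed bound.)

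The missing ingredient is the VC-dimension argument. The paper anchors each $u$ at $b=\phi(u)=\max_<\bigl(W\cap\wreach_r[G,<,u]\bigr)$ (the maximum, not the minimum --- this is needed so that the minimum $w$ of any short $(u,A)$-path satisfies $w\le b$ and is hence weakly $2r$-reachable from $b$ via the concatenated walk), and shows that $S_b:=\wreach_{2r}[G,<,b]$, of size $\bigO_t(r^{t-1})$, meets every $(u,A)$-path of length at most $r$. So $\{S_b\mid b\in W\}$ is a guarding set, and the profile of $u$ on $A$ is determined by its profile on the small set $S_{\phi(u)}$. The number of profiles on a set $S$ of size $\bigO_t(r^{t-1})$ is then bounded not by enumeration but by Corollary~\ref{corollary:bound_polynomial_in_A_profiles} (Bousquet--Thomass\'e's bound of $t-1$ on the VC-dimension of balls in $K_t$-minor-free graphs plus Sauer--Shelah, plus the ball-to-profile reduction), giving $(r+1)^{t-1}|S|^{t-1}=\bigO_t(r^{t^2-t})$ per anchor; multiplied by the $\bigO_t(r^{t-1})|A|$ anchors this yields $\bigO_t(r^{t^2-1})|A|$. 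Without some substitute for this VC-dimension step, your argument does not close.
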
 

(The subscript in the notation $\bigO_t(\cdot)$ indicates that the hidden constant factor depends on $t$, and likewise for the other asymptotic notations in the paper.) 

We also consider graphs of bounded treewidth and graphs on surfaces, and give upper bounds on their profile complexities that are nearly tight, see Sections~\ref{section:bounded_treewidth},~\ref{section:surfaces}, and~\ref{section:lower_bounds}.  
Table~\ref{tab:summary_of_the_bounds} gives a summary of the main upper and lower bounds proved in this paper. 

\begin{table}[!ht]
    \centering
    \renewcommand{\arraystretch}{2.5}
    \begin{tabular}{| c  c  c  c |}
        \hline
        \bf Graph class & \bf Upper bound & \bf Lower bound & \bf Reference \\
        \hline 
        \sc Planar & $\bigO(r^4)$ & $\bigOmega(r^3)$ & 
        Thm.~\ref{theorem:NC_planar_degree_4} and~Thm.~\ref{theorem:construction_bounded_treewidth} \\
        \hline
        \sc Euler Genus $g$ & $\bigO(r^5)$ & $\bigOmega(r^3)$  &  Cor.~\ref{cor:neighborhood_complexity_surfaces} and~Thm.~\ref{theorem:construction_bounded_treewidth} \\
        \hline
        \sc Treewidth $t$ & 
        $\bigO_t(r^{2t})$  & $\bigOmega_t(r^{t+1})$ &  Cor.~\ref{corollary:NC_bounded_treewidth} and Thm.~\ref{theorem:construction_bounded_treewidth}\\
        \hline
        \sc $K_t$-minor-free & $\bigO_t(r^{t^2-1})$  & $\bigOmega_t(r^{t-1})$  &  Thm.~\ref{thm:NC_Kt_Minor_free} and Thm.~\ref{theorem:construction_bounded_treewidth}\\
        \hline        
    \end{tabular}
    \caption{Summary of our upper and lower bounds for the profile complexities of various graph classes.}
    \label{tab:summary_of_the_bounds}
\end{table}

While planar graphs, and more generally proper minor-closed classes of graphs, have polynomial profile complexity (and so polynomial neighborhood complexity), we show in Section~\ref{section:lower_bounds} that $1$-planar graphs, which
are graphs that can be drawn in the plane in such a way that every edge crosses at most one other edge,
have super-polynomial profile and neighborhood complexities. 
$1$-planar graphs are relatively close to planar graphs---in particular, they have $\bigO(\sqrt{n})$-size separators, where $n$ denotes the number of vertices---and are among the ``simplest'' non-minor closed graph classes within the realm of classes with bounded expansion. 
Thus, the latter result suggests that, besides proper minor closed graph classes, there are probably not many other bounded expansion classes of interest that have polynomial neighborhood complexity.

\subsection*{Related works}

Very recently, the relation between neighborhood complexity and twin-width has been studied in~\cite{bonnet_twin-width_2021} and independently in~\cite{przybyszewski_vc-density_2022}. 
In particular, it is shown in these two papers that graphs  with bounded twin-width have bounded neighborhood complexity. 
(We refer the reader to~\cite{bonnet_twin-width_2021, przybyszewski_vc-density_2022} for the definition of twin-width.) 
These bounds have been recently improved in~\cite{bonnet2023_NC_bounded_tww}. 

We remark that every proper minor-closed class of graphs has both bounded expansion (as is well known) and bounded twin-width (as proved in~\cite{BKTW21}). 
Thus, bounded neighborhood complexity for such a class also follows from the above mentioned results on twin-width. 
It is worth pointing out though that bounded expansion and bounded twin-width are two incomparable properties for graph classes, as neither implies the other. 
In particular, there are graph classes that do not have bounded expansion but have bounded twin-width (e.g.\ cographs). 
Also, let us emphasize that the characterization of bounded expansion classes in terms of bounded neighborhood complexity mentioned at the beginning of the introduction only applies to classes closed under subgraphs, which is not the case of bounded twin-width graphs. 

We also note that neighborhood complexity at distance $1$ and $2$ has also been used recently in~\cite{bonnet_reduced_2022} for problems related to twin-width.

We close this introduction by mentioning another concept that is closely related to neighborhood complexity.
The \emph{metric dimension} of a connected graph $G$ is the minimum size $k$ of a set $R$ of vertices in $G$ such that for every vertex $v$, the $k$-tuple
$(\dist_G(v,r))_{r \in R}$ uniquely determines $v$. 
Beaudou, Foucaud, Dankelmann, Henning, Mary, and Parreau~\cite{beaudou_bounding_2018} gave upper bounds on the number of vertices of graphs with given diameter and metric dimension. 
In particular, they showed that for every connected $n$-vertex graph $G$ with diameter $d$ and metric dimension $k$, 
if $G$ has treewidth at most $t$ then 
\[
n\in \bigO(d^{3t+3}k)
\] 
and if $G$ is $K_t$-minor free then 
\[
n \leq (dk+1)^{t-1}+1.
\] 
As it turns out, upper bounds on profile complexities can be translated to upper bounds in the latter setting, and as a result we obtain improved bounds on the number of vertices in the latter setting:  
if $G$ has treewidth at most $t$ then 
\[
n \in \bigO_t(d^{2t}k) 
\]
and if $G$ is $K_t$-minor free then 
\[
n \in \bigO_t(d^{t^2-1}k),
\]
see Theorem~\ref{thm:metric_dimension} in Section~\ref{sec:metric_dimension} (The latter bound is of course an improvement only when $k \gg d$.) 
As far as we are aware, this connection between neighborhood complexity and metric dimension has not been made before in the literature. 

\subsection*{Paper organization}

The paper is organized as follows. 
First, in Section~\ref{sec:prelim} we introduce all necessary definitions, as well as some preliminary lemmas. 
Then in Section~\ref{section:proper_minor_closed} we prove the aforementioned bound of $\bigO_t(r^{t^2-1})$ on the profile complexity of $K_t$-minor-free graphs (Theorem~\ref{thm:profile_complexity_excluded_minor}). 
For the special case of graphs of treewidth $t$ (which are $K_{t+2}$-minor free), we give an improved bound of $\bigO_t(r^{2t})$  in Section~\ref{section:bounded_treewidth}, see Corollary~\ref{corollary:NC_bounded_treewidth}. 
In Section~\ref{sec:product_structure}, we consider graphs admitting a so-called `product structure', which includes planar graphs, and show the existence of certain types of `guarding sets' in these graphs (to be used in Section~\ref{section:planar}), see Section~\ref{sec:product_structure} for the definitions. 
The next section, Section~\ref{section:planar}, contains our main result, namely a bound of $\bigO(r^{4})$ on the profile complexity of planar graphs (Theorem~\ref{thm:profile_complexity_planar}).  
Since planar graphs are $K_5$-minor free, a bound of $\bigO(r^{24})$ already follows Theorem~\ref{thm:profile_complexity_excluded_minor}. 
As a warm-up to the proof of the $\bigO(r^{4})$ bound, we first give two easier proofs in Section~\ref{section:planar}, giving bounds of respectively $\bigO(r^{11})$ and $\bigO(r^{6})$. 
Next, in Section~\ref{section:surfaces}, we lift the result for planar graphs to graphs on surfaces and obtain a $\bigO_g(r^{5})$ bound on the profile complexity of graphs of Euler genus $g$ (Corollary~\ref{cor:neighborhood_complexity_surfaces}). 
In Section~\ref{section:lower_bounds}, we prove the lower bounds mentioned in Table~\ref{tab:summary_of_the_bounds}. 
In Section~\ref{sec:metric_dimension}, we study the connection between profile complexity and metric dimension, and obtain a number of new bounds on metric dimension using  our results. 
Finally, we conclude the paper by emphasizing two open problems in  Section~\ref{sec:conclusion}.

\section{Preliminaries}
\label{sec:prelim}

All graphs considered in this paper are finite, simple, and undirected. 
We will use the following notations.
Let $G$ be a graph and let $u$ be a vertex of $G$.

\begin{itemize}
    \item $[n] = \{1, \dots ,n\}$ for every integer $n \geq 1$.
    \item $V(G)$ is the set of vertices of $G$, $E(G)$ its set of edges.
    \item For any set $X$ and integer $d \geq 0$, $\binom{X}{d} = \{Y \subseteq X \mid
    |Y|=d \}$ and $\binom{X}{\leq d} = \{Y \subseteq X \mid |Y| \leq d\}$.
    \item For any set $X$, $\mathcal{P}(X)$ is the set of all subsets of $X$.    
    \item $N_G(u) = \{v \in V(G) \mid uv \in E(G)\}$, and more generally
        for every $r \geq 0$ we define
        $N^r_G(u) = \{v \in V(G) \mid \dist_G(u,v)=r\}$.
        Similarly $N^r_G[u] = \{v \in V(G) \mid \dist_G(u,v) \leq r\}$. 
        Here, $\dist_G(u,v)$ denotes the distance between $u$ and $v$ in $G$. 
    \item If $P$ is a path, we denote by $\ell(P)$ its length, 
        which is defined as its number of edges.
\end{itemize}

A \emph{tree decomposition} of a graph $G$ is a pair $(T,(X_z)_{z \in V(T)})$,
where $T$ is a tree and $X_z \subseteq V(G)$ for every $z \in V(T)$, with the 
following properties:
\begin{enumerate}[label=(\roman*)]
    \item for every vertex $u \in V(G)$, the subgraph $T_u$ of $T$
        induced by $\{z \in V(T) \mid u \in X_z\}$ is a non empty subtree, and
    \item for every edge $uv \in E(G)$, $V(T_u) \cap V(T_v) \neq \emptyset$.
\end{enumerate}
We call the sets $X_z$ the \emph{bags} of the tree decomposition.
The \emph{width} of a tree decomposition is the maximum size of a bag minus one.
The \emph{treewidth} of a graph $G$ is defined to be the minimum width of a tree
decomposition of $G$.
For standard results on treewidth, see for example~\cite[Section 12.3]{diestel_book}.
We will in particular use the following facts:
\begin{itemize}
    \item if $C$ is a clique of $G$, then $C$ is included in a bag,
    \item for every edge $yz \in E(T)$, if $T_1$ (resp.\ $T_2$)
    denotes the connected component of $y$ (resp.\ $z$) in $T-yz$,
    then $X_y \cap X_z$ intersects every 
    $(\bigcup_{x \in V(T_1)} X_x, \bigcup_{x \in V(T_2)} X_x)$-path in $G$.
\end{itemize}

The tree decompositions considered in this paper will be arbitrarily (and implicitly)
rooted at a node $s \in V(T)$. This induces a root $t_u$ of the subtree $T_u$ for every
vertex $u \in V(G)$ (namely, the node of $T_u$ closest to $s$ in $T$). 
Moreover, we write $x <_T y$ for two distinct nodes 
$x,y \in V(T)$ if $x$ is in the (unique) $(y,s)$-path in $T$. 
This defines a partial order on $V(T)$.

Given a subset $A \subseteq V(G)$ of vertices, a vertex $u \in V(G)$ and an integer $r\geq 0$, 
the \emph{profile} of $u$ on $A$ at distance $r$ is the function 
$\pi_{r,G}[u \to A]$ that maps a vertex $a \in A$ to $\dist_G(u,a)$
if $\dist_G(u,a) \leq r$, to $+\infty$ otherwise.
Formally, $\pi_{r,G}[u \to A]: A \to \{0, \dots,r, +\infty\}, a \mapsto
\mathrm{Cap}_r(\dist_G(u,a))$, where $\mathrm{Cap}_r$ is the function that maps an integer $x$ to itself if $x \leq r$, to $+\infty$ otherwise.
For every $U \subseteq V(G)$, we denote by $\Pi_{r,G}[U \to A]$ 
the set of all profiles $\pi_{r,G}[u \to A]$ for vertices $u \in U$ except the `all $+\infty$' profile; that is, $\Pi_{r,G}[U \to A]=\{\pi_{r,G}[u \to A] \mid u \in U\} \setminus \{\lambda a.+\infty\}$. 
(We exclude the `all $+\infty$' profile because it is convenient in the proofs.) 
Clearly, $|\{ N^r[u] \cap A \mid u \in V(G)\}| \leq |\Pi_{r,G}[V(G) \to A]| + 1$, 
so it is enough to bound $|\Pi_{r,G}|V(G) \to A]|$ to get directly a bound on the
neighborhood complexity. The advantage of considering profiles is the following
``transitivity'' property: Given $U \subseteq V(G)$, if $S\subseteq V(G)$ meets 
every $(U,A)$-path of length at most $r$ in $G$, 
then the profile of a vertex $u \in U$ on $A$ is completely determined
by its profile on $S$.
This observation is also used in~\cite{RSS19} to prove graph classes with bounded expansion have bounded neighborhood complexity. 
This is a key idea, which justifies the introduction of our main tool,
the guarding sets, defined in Subsection~\ref{subsection:guarding_sets}.

\subsection{VC-dimension}

A \emph{set system} $\mathcal{F}$ over a universe $\Omega$
is a collection of subsets of $\Omega$.
The \emph{Vapnik–Chervonenkis dimension} (VC-dimension)~\cite{vapnik2015uniform}
of a set system $\mathcal{F}$ is the largest integer $d$ such that there 
exists $X \subseteq \Omega$ of size $d$ which is \emph{shattered} by $\mathcal{F}$:
for every $X' \subseteq X$, there exists $F \in \mathcal{F}$ such that $X \cap F = X'$.
Our first tool is a direct consequence of the well-known Sauer–Shelah Lemma \cite{SAUER1972145, shelah1972combinatorial}. 

\begin{lemma}[Corollary of Sauer–Shelah Lemma \cite{SAUER1972145, shelah1972combinatorial}]\label{lemma:sauer_shelah}
Let $\mathcal{F}$ be a set system over a universe $\Omega$ of VC-dimension
at most $d$ with $d \geq 2$. If $A \subseteq \Omega$ is non-empty then
\[
|\{A \cap F \mid F \in \mathcal{F}\}| \leq |A|^d.
\]
\end{lemma}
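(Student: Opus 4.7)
The plan is to derive this bound directly from the classical Sauer-Shelah inequality, which states that any set system $\mathcal{G}$ on a finite universe $U$ with VC-dimension at most $d$ satisfies $|\mathcal{G}| \leq \sum_{i=0}^{d} \binom{|U|}{i}$. The corollary here only asks for a clean polynomial form of this bound restricted to a chosen set $A$, so the whole task reduces to combining this inequality with an elementary numerical estimate.

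The first step is to pass to the \emph{trace} of $\mathcal{F}$ on $A$, namely the set system $\mathcal{F}|_A := \{A \cap F \mid F \in \mathcal{F}\}$ on the finite universe $A$. The quantity we wish to bound is exactly $|\mathcal{F}|_A|$. A short observation is that $\mathcal{F}|_A$ still has VC-dimension at most $d$: any $X \subseteq A$ shattered by $\mathcal{F}|_A$ is also shattered by $\mathcal{F}$, since a witness $F' = A \cap F \in \mathcal{F}|_A$ with $F' \cap X = X'$ comes from some $F \in \mathcal{F}$ with $F \cap X = X'$; hence $|X| \leq d$. Applying the classical Sauer-Shelah inequality to $\mathcal{F}|_A$ on $A$ then yields
\[
\bigl|\{A \cap F \mid F \in \mathcal{F}\}\bigr| \;=\; |\mathcal{F}|_A| \;\leq\; \sum_{i=0}^{d} \binom{|A|}{i}.
\]

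The second and final step is to verify that this binomial sum is at most $|A|^d$. This is the only place where actual calculation is needed, and I expect it to be the main (mild) obstacle. A convenient approach is induction on $d$: the base case $d = 2$ amounts to checking that $1 + |A| + \binom{|A|}{2} \leq |A|^2$, which rearranges to $(|A|-2)(|A|+1) \geq 0$ and hence holds whenever $|A| \geq 2$; the inductive step combines the crude estimate $\binom{|A|}{d} \leq |A|^d/d!$ with the inequality $1 + |A|/d! \leq |A|$, valid for $|A| \geq 2$ and $d \geq 2$. The hypothesis $d \geq 2$ in the statement is essential, since for $d = 1$ the bound $|A|^d$ already fails as soon as $|A| \geq 2$ and $\mathcal{F}$ shatters a singleton of $A$. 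Beyond this routine arithmetic check, the proof is entirely carried by Sauer-Shelah and by the elementary fact that taking traces does not increase the VC-dimension.
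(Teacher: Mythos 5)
Your proposal is correct and follows essentially the same route as the paper: pass to the trace of $\mathcal{F}$ on $A$, apply the classical Sauer--Shelah bound $\sum_{i=0}^{d}\binom{|A|}{i}$, and then verify that this sum is at most $|A|^{d}$. The only difference lies in that last elementary step, where you use induction on $d$ while the paper counts the nonempty subsets of $[|A|]$ of size at most $d$ via a surjective but non-injective map from $[|A|]^{d}$; note that both arguments (and in fact the statement itself, when $\mathcal{F}$ realizes both traces on a singleton) quietly require $|A|\geq 2$, an edge case neither your proof nor the paper's addresses.
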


\begin{proof}
Let $n= |A|$.
The Sauer-Shelah Lemma states that $|\{A \cap F \mid F \in \mathcal{F}\}| \leq \sum_{i=0}^d
\binom{n}{i}$, so we only need to show the inequality
\[
\sum_{i=0}^d \binom{n}{i} \leq n^d.
\]
Consider $f:[n]^d \to \binom{[n]}{\leq d} \setminus \{\emptyset\}, 
(x_1, \dots, x_d) \mapsto \{x_i\}_{i=1,\dots,d}$.
Then $f$ is surjective but not injective (because $d \geq 2$) so
$|\binom{[n]}{\leq d}\setminus \{\emptyset\}| < n^d$ and thus
$\sum_{i=0}^d \binom{n}{i} \leq n^d$.
\end{proof}

We now prove that some particular set systems related to balls in planar graphs have bounded VC-dimension. 
Let us emphasize that, in the following lemma, $A$ is not an arbitrary vertex subset but is restricted to be a vertex subset of the outerface. This allows for a better bound on the VC-dimension (and this special case is sufficient for our  purposes). 

\begin{lemma}\label{lemma:VC_dim_outer_face_at_most_3}
Let $G$ be a plane graph and let $A$ be a subset of vertices of the outerface.
Then $\{N^r_G[v] \cap A \mid r \geq 0, v \in V(G)\}$ has VC-dimension at most $3$.
\end{lemma}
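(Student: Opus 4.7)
The plan is to argue by contradiction: assume some four vertices $a_1, a_2, a_3, a_4 \in A$ are shattered by the family of balls. Since $A$ lies on the outer face boundary, I may relabel so that $a_1, a_2, a_3, a_4$ appear in this cyclic order on that boundary. The key idea is that the two complementary pairs $\{a_1, a_3\}$ and $\{a_2, a_4\}$ interleave on the outer face, and this combinatorial interleaving will clash with planarity. Using the shattering hypothesis, I fix a ball $B_{13} = N^{r_{13}}[v_{13}]$ with $B_{13} \cap \{a_1, a_2, a_3, a_4\} = \{a_1, a_3\}$ and a ball $B_{24} = N^{r_{24}}[v_{24}]$ with $B_{24} \cap \{a_1, a_2, a_3, a_4\} = \{a_2, a_4\}$, together with shortest paths $P_1, P_3$ from $v_{13}$ to $a_1, a_3$ and shortest paths $P_2, P_4$ from $v_{24}$ to $a_2, a_4$.

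The first step is topological. Viewing $W_{13} := P_1 \cup P_3$ as a curve from $a_1$ to $a_3$ and $W_{24} := P_2 \cup P_4$ as a curve from $a_2$ to $a_4$, both drawn inside the plane region bounded by the outer face, the fact that $a_1, a_2, a_3, a_4$ alternate on the boundary forces $W_{13}$ and $W_{24}$ to intersect. Because the embedding has no crossings, they must share an actual vertex $w$. By symmetry I may assume $w \in P_1 \cap P_2$.

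The second step is metric. Since $w$ lies on a shortest $v_{13}$-$a_1$ path, $\dist(v_{13}, w) = \dist(v_{13}, a_1) - \dist(w, a_1)$, and hence
\[
\dist(v_{13}, a_2) \le \dist(v_{13}, w) + \dist(w, a_2) \le r_{13} - \dist(w, a_1) + \dist(w, a_2).
\]
Since $a_2 \notin B_{13}$, one has $\dist(v_{13}, a_2) > r_{13}$, which forces $\dist(w, a_1) < \dist(w, a_2)$. The symmetric argument applied to $w \in P_2$ together with $a_1 \notin B_{24}$ yields $\dist(w, a_2) < \dist(w, a_1)$, a contradiction. The three remaining cases ($w \in P_1 \cap P_4$, $P_3 \cap P_2$, or $P_3 \cap P_4$) are handled identically by swapping roles of the indices.

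The step I expect to require the most care is the topological one producing the common vertex $w$: the outer face boundary of a plane graph is only a closed walk (not necessarily a simple cycle), and $v_{13}, v_{24}$ or internal vertices of the $P_i$'s might lie on it, complicating a direct appeal to the Jordan curve theorem. I would handle this by temporarily augmenting the drawing with four arcs in the outer face joining consecutive $a_i$'s without introducing crossings and without shortening distances in $G$, thereby enclosing $G$ inside a topological disc whose boundary circle carries $a_1, a_2, a_3, a_4$ in cyclic order; two curves in a disc with alternating endpoints on its boundary must meet, and since the drawing is crossing-free, the meeting point must be a vertex. Once $w$ is in hand, the metric contradiction is routine.
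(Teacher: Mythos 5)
Your proposal is correct and follows essentially the same route as the paper's proof: take the ball realizing $\{a_1,a_3\}$ and the ball realizing $\{a_2,a_4\}$, form the unions of shortest paths to the two interleaved pairs, use planarity to extract a common vertex $w$, and reach a contradiction via the triangle inequality. The only (harmless) differences are that you finish with two opposing strict inequalities where the paper uses a WLOG assumption $\dist(x',a)\leq \dist(x',b)$ and a single inequality chain, and that you treat the topological intersection step more carefully than the paper's one-line appeal to planarity.
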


\begin{proof}
Suppose for contradiction that there exist $a,b,c,d \in A$
such that $\{a,b,c,d\}$ is shattered by some balls in $G$. 
Permuting $a,b,c,d$ if necessary, we may assume that there is a walk on the outerface such that $a,b,c,d$ appear for the first time in the walk in this order.
In particular, there exist $x_{ac},x_{bd} \in V(G)$ and
$r_{ac},r_{bd} \geq 0$ such that 
$N^{r_{ac}}_G[x_{ac}] \cap A = \{a,c\}$ and $N^{r_{bd}}_G[x_{bd}] \cap A = \{b,d\}$.

Let $P_{ac}$ (resp. $P_{bd}$) be the union of a shortest $(a,x_{ac})$-path
(resp. $(b,x_{bd})$-path) and a shortest $(x_{ac},c)$-path (resp. $(x_{bd},d)$-path).
By planarity, $P_{ac}$ and $P_{bd}$ have at least one common vertex $x'$. 
Permuting again $a,b,c,d$ if necessary (and possibly changing the orientation of the facial walk), we may assume that $x'$ is in
a shortest $(a,x_{ac})$-path $P'_{ac}$ and in a shortest $(b,x_{bd})$-path $P'_{bd}$. Moreover, we may also suppose that $\dist(x',a) \leq \dist(x',b)$.

As $a \not\in N^{r_{bd}}[x_{bd}]$ we have $\dist(x_{bd}, a) > r_{bd}$.
But $\dist(x_{bd},a) \leq \dist(x_{bd},x') + \dist(x',a) \leq \dist(x_{bd},x') +
\dist(x',b) = \dist(x_{bd}, b) \leq r_{bd}$. This contradiction completes the proof
of the lemma.
\end{proof}

The above lemma will be used later on in our proofs. 
We conclude this section with the following general theorem due to
Bousquet and Thomassé~\cite{bousquet_vc-dimension_2015}. 

\begin{theorem}[Bousquet and Thomassé~\cite{bousquet_vc-dimension_2015}]\label{theorem:VC_dim_Kt_minor_free}
Let $G$ be a $K_t$-minor-free graph, and let $\mathcal{H}$ be the set system on $V(G)$ consisting of all the balls in $G$ (of all possible radii). 
Then $\mathcal{H}$ has VC-dimension at most $t-1$.
\end{theorem}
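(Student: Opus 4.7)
The plan is to prove the contrapositive: if there is a subset $\{v_1,\ldots,v_t\}\subseteq V(G)$ shattered by the balls of $G$, then $G$ contains $K_t$ as a minor. The branch sets of this $K_t$-model will be built by a Voronoi-type partition: for each $u\in V(G)$, assign $u$ to the class $V_i$ for which the pair $(\dist_G(u,v_i),i)$ is lexicographically smallest. By construction the $V_i$'s are pairwise disjoint and $v_i\in V_i$; note also that only pairwise shattering will actually be used, so the full power of the hypothesis is not needed.

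The first routine step would be to check that each $V_i$ is connected in $G$. Given $u\in V_i$ and a shortest $(u,v_i)$-path $Q$, for every vertex $u'$ of $Q$ and every $k\neq i$ one has $\dist_G(u',v_i) = \dist_G(u,v_i) - \dist_G(u,u')$ while $\dist_G(u',v_k) \geq \dist_G(u,v_k) - \dist_G(u,u')$ by the triangle inequality, so $(\dist_G(u',v_i),i) \leq (\dist_G(u',v_k),k)$ lexicographically and $u'\in V_i$.

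The core of the argument is to show that for every pair $i\neq j$ there is an edge of $G$ between $V_i$ and $V_j$. The shattering hypothesis provides $x_{ij}\in V(G)$ and $r_{ij}\geq 0$ with $N^{r_{ij}}[x_{ij}]\cap\{v_1,\ldots,v_t\} = \{v_i,v_j\}$. Let $P_{ij}$ be the walk obtained by concatenating a shortest $(v_i,x_{ij})$-path with a shortest $(x_{ij},v_j)$-path. I would then argue that every vertex $u$ of $P_{ij}$ belongs to $V_i\cup V_j$: if $u$ lies on the $(x_{ij},v_j)$-subpath then $\dist_G(u,v_j) = \dist_G(x_{ij},v_j) - \dist_G(x_{ij},u) \leq r_{ij} - \dist_G(x_{ij},u)$, and for any $k\notin\{i,j\}$ the triangle inequality combined with $\dist_G(x_{ij},v_k) > r_{ij}$ yields $\dist_G(u,v_k) > r_{ij} - \dist_G(x_{ij},u) \geq \dist_G(u,v_j)$, so $u\notin V_k$; the same reasoning handles the $(v_i,x_{ij})$-subpath. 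Since $P_{ij}$ starts at $v_i\in V_i$, ends at $v_j\in V_j$, and stays entirely inside $V_i\cup V_j$, some edge of $P_{ij}$ must join $V_i$ to $V_j$.

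The main obstacle to watch for is precisely this triangle-inequality comparison along $P_{ij}$: one must carry out the bookkeeping carefully to rule out any $v_k$ with $k\notin\{i,j\}$ being strictly closer to a vertex of $P_{ij}$ than $v_i$ or $v_j$. Once this is in place, the sets $V_1,\ldots,V_t$ form the branch sets of a $K_t$-model in $G$, contradicting the fact that $G$ is $K_t$-minor-free and thereby establishing the claimed bound $t-1$ on the VC-dimension.
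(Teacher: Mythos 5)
The paper does not actually prove this statement: Theorem~\ref{theorem:VC_dim_Kt_minor_free} is imported as a black box from Bousquet and Thomass\'e, so there is no internal proof to compare yours against. That said, your argument is correct and is essentially the standard proof of this result (and close in spirit to the original one): the lexicographic Voronoi assignment makes the cells $V_1,\dots,V_t$ pairwise disjoint, the shortest-path computation shows each cell is connected, and for each pair $i\neq j$ the ball witnessing the trace $\{v_i,v_j\}$ yields a walk $P_{ij}$ from $v_i$ to $v_j$ all of whose vertices $u$ satisfy $\dist_G(u,v_k)>\min(\dist_G(u,v_i),\dist_G(u,v_j))$ for every $k\notin\{i,j\}$ --- note this inequality is strict, so the tie-breaking rule is irrelevant here and $u\in V_i\cup V_j$ follows; hence some edge of $P_{ij}$ joins $V_i$ to $V_j$ and the cells form a $K_t$-model. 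Two minor points you should make explicit in a polished write-up: the cells only partition the set of vertices at finite distance from $\{v_1,\dots,v_t\}$ (which suffices, as all vertices used lie on the paths $P_{ij}$ and the shortest paths to the $v_i$), and the $v_i$ are pairwise distinct and pairwise connected because the shattering provides, for each pair, a ball containing exactly that pair. As you observe, only the pair traces are used, so the argument in fact shows the stronger statement that the balls cannot realize all $\binom{t}{2}$ two-element traces on any $t$-set in a $K_t$-minor-free graph.
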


Together with Lemma~\ref{lemma:sauer_shelah}, this gives directly
the following corollary.

\begin{corollary}\label{corollary:bound_polynomial_in_A}
Let $t$ be an integer with $t \geq 3$.
If $G$ is $K_t$-minor-free and $A \subseteq V(G)$ is nonempty, then 
\[
|\{N^r[v] \cap A \mid r \geq 0, v \in V(G)\}|  \leq |A|^{t-1}.
\]
\end{corollary}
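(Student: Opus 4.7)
The plan is essentially a two-line application of the preceding two results, so there is no real obstacle here. First I would apply Theorem~\ref{theorem:VC_dim_Kt_minor_free} to the graph $G$: since $G$ is $K_t$-minor-free, the set system $\mathcal{H} = \{N^r[v] \mid r \geq 0, v \in V(G)\}$ consisting of all balls in $G$ has VC-dimension at most $t-1$.

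Next, observe that because $t \geq 3$ we have $d := t-1 \geq 2$, so Lemma~\ref{lemma:sauer_shelah} applies to $\mathcal{H}$ with this value of $d$. Taking the nonempty set $A \subseteq V(G)$ as the distinguished subset of the universe $\Omega = V(G)$, the lemma yields
\[
|\{A \cap F \mid F \in \mathcal{H}\}| \leq |A|^{t-1},
\]
which is exactly the stated bound $|\{N^r[v] \cap A \mid r \geq 0,\, v \in V(G)\}| \leq |A|^{t-1}$.

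The only thing worth checking carefully is the hypothesis $d \geq 2$ of Lemma~\ref{lemma:sauer_shelah}, which is where the assumption $t \geq 3$ is used; for $t = 2$ the graph $G$ would be edgeless and the statement would be trivial anyway. No further work is needed.
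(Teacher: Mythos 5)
Your proposal is correct and matches the paper's argument exactly: the corollary is stated there as a direct combination of Theorem~\ref{theorem:VC_dim_Kt_minor_free} (VC-dimension of balls at most $t-1$) with Lemma~\ref{lemma:sauer_shelah} applied with $d=t-1\geq 2$. Nothing further is needed.
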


\subsection{From balls to profiles}

Recall that the profile at distance $r$ of a vertex $u$ on a set $A$ of vertices in a graph $G$ is
the function $\pi_{r,G}[u \to A]:A \to \{0, \dots,r, + \infty\}$ that maps a vertex
$a \in A$ to $\dist_G(u,A)$ if this distance is at most $r$, to $+ \infty$ otherwise. 
Also, for $U\subseteq A$, $\Pi_{r,G}[U \to A]$ denotes
the set of all profiles $\pi_{r,G}[u \to A]$ for vertices $u \in U$ except the `all $+\infty$' profile. 
Thus, 
\[
|\{ N^r[u] \cap A \mid u \in V(G)\}| \leq |\Pi_{r,G}[V(G) \to A]| + 1.
\]
In this subsection, we show that the reverse inequality also holds in some approximate sense. 

\begin{lemma}\label{lemma:from_balls_to_profiles}
Let $\mathcal{C}$ be a class of graphs such that
\begin{enumerate}
\item if $G \in \mathcal{C}$, then every graph obtained from $G$ by adding a new vertex adjacent to exactly one vertex of $G$ is in $\mathcal{C}$ as well, and
\item there exists a function $f: \mathbb{N}^2 \to \mathbb{N}$ such that
    for every integer $r \geq 0$, for every graph $G \in \mathcal{C}$
    and every $A \subseteq V(G)$,
    \[
        |\{N^r_G[v] \cap A \mid v \in V(G)\}| \leq f(r,|A|).  
    \]
\end{enumerate}
Then, for every graph $G \in \mathcal{C}$ and every $A \subseteq V(G)$, we have
\[
|\Pi_{r,G}[V(G) \to A]| \leq f(r,(r+1)|A|).
\]

\end{lemma}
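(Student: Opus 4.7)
The plan is to reduce the problem of counting profiles at distance $r$ in $G$ to counting ball intersections in a slightly larger graph $G' \in \mathcal{C}$, to which property~2 can be applied. Given $G \in \mathcal{C}$ and a nonempty $A \subseteq V(G)$, I would construct $G'$ from $G$ by attaching, for each $a \in A$, a pendant path of length $r$: new vertices $a_1, a_2, \dots, a_r$ with $a_i$ adjacent only to $a_{i-1}$, where we set $a_0 := a$. At each step the new vertex has exactly one neighbor in the current graph, so by iterated application of property~1 we have $G' \in \mathcal{C}$. Set $A' := \{a_i \mid a \in A,\ 0 \le i \le r\}$, so that $|A'| = (r+1)|A|$.

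The key observation is that the added vertices sit on dangling paths with a unique entry point in $G$. Therefore, for any $v \in V(G)$ and any $a \in A$, every $(v,a_i)$-path in $G'$ must enter the attached path through $a$ and traverse its first $i$ edges, giving $\dist_{G'}(v, a_i) = \dist_G(v, a) + i$. In particular, $a_i \in N^r_{G'}[v]$ if and only if $\dist_G(v,a) \le r - i$, so from the intersection $N^r_{G'}[v] \cap A'$ we can recover, for each $a \in A$, the exact value of $\dist_G(v,a)$ when this distance is at most $r$, and detect (by the absence of any $a_i$) when it exceeds $r$. Hence the profile $\pi_{r,G}[v \to A]$ is completely determined by $N^r_{G'}[v] \cap A'$, and distinct profiles in $\Pi_{r,G}[V(G) \to A]$ yield distinct intersections with $A'$ in $G'$.

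Applying property~2 to the graph $G' \in \mathcal{C}$ with the set $A'$ then gives
\[
|\Pi_{r,G}[V(G) \to A]| \;\le\; \bigl|\{N^r_{G'}[v] \cap A' \mid v \in V(G')\}\bigr| \;\le\; f\bigl(r, (r+1)|A|\bigr),
\]
which is the desired bound. There is no real obstacle in the argument; the only point requiring care is the distance identity $\dist_{G'}(v, a_i) = \dist_G(v, a) + i$ for $v \in V(G)$, and this is immediate from the fact that each $a_i$ has a unique access vertex, namely $a$, in $G$.
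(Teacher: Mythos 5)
Your proposal is correct and follows essentially the same route as the paper: attach a pendant path of length $r$ to each $a \in A$, observe that $\dist_{G'}(v,a_i)=\dist_G(v,a)+i$ so that $N^r_{G'}[v]\cap A'$ determines the profile $\pi_{r,G}[v\to A]$, and apply hypothesis~2 to $G'$ with $|A'|=(r+1)|A|$. The only difference is the (immaterial) reversal of the indexing of the path vertices relative to the paper's convention.
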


\begin{proof}
We build $G'$ from $G$ by gluing on every vertex $a \in A$ a path $a_0 a_1 \dots a_r$  of length $r$ with
$a_r=a$, and we let $A' = \{a_i \mid a \in A, 0 \leq i \leq r\}$ 
denote the union of the vertex set of these paths (including $A$). 
By hypothesis, $G'$ is in $\mathcal{C}$.

We claim that for every $u,v \in V(G)$, if $N^r_{G'}[u] \cap A'=N^r_{G'}[v] \cap A'$ then 
$\pi_{r,G}[u \to A] = \pi_{r,G}[v \to A]$.
Indeed, for every $a \in A$, $\pi_{r,G}[u \to A](a) = \min\{i \mid a_i \in N^r[u]\}=
\min\{i \mid a_i \in N^r[v]\} = \pi_{r,G}[v \to A](a)$ with the convention
$\min \emptyset = + \infty$.

Hence, the result follows from the fact that $|A'| = (r+1)|A|$.
\end{proof}

Combining Lemma~\ref{lemma:from_balls_to_profiles} with Corollary~\ref{corollary:bound_polynomial_in_A} gives the following corollary. 

\begin{corollary}\label{corollary:bound_polynomial_in_A_profiles} 
Let $t$ be an integer with $t\geq 3$ and let $G$ be a $K_t$-minor-free graph. Then, for every nonempty set $A \subseteq V(G)$ and
every integer $r \geq 0$
\[
|\Pi_{r,G}[V(G) \to A]| \leq (r+1)^{t-1}|A|^{t-1}
\]
In particular, if $G$ is planar, then $|\Pi_{r,G}[V(G) \to A]| \leq (r+1)^4|A|^4$.
\end{corollary}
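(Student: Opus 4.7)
The plan is to deduce the corollary by a direct application of Lemma~\ref{lemma:from_balls_to_profiles} to the class $\mathcal{C}$ of $K_t$-minor-free graphs, using Corollary~\ref{corollary:bound_polynomial_in_A} to supply the function $f$. I first need to check the two hypotheses of Lemma~\ref{lemma:from_balls_to_profiles}. Hypothesis (1) is that $\mathcal{C}$ is closed under attaching a pendant vertex; this holds because any $K_t$ minor in the augmented graph either avoids the new vertex $v$ (and hence already lives in $G$) or uses $v$, in which case, since $v$ has degree one, contracting its unique incident edge turns the alleged $K_t$ minor into a $K_t$ minor of $G$ itself. Thus $K_t$-minor-freeness is preserved.

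For hypothesis (2), Corollary~\ref{corollary:bound_polynomial_in_A} tells us that for every $K_t$-minor-free graph $H$ and every nonempty $B \subseteq V(H)$,
\[
|\{N^r_H[v] \cap B \mid v \in V(H)\}| \leq |B|^{t-1},
\]
so one may take $f(r,m) = m^{t-1}$ (the bound is trivial when $B$ is empty). Note that $f$ is constant in $r$, which is exactly what makes the final estimate polynomial.

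Applying Lemma~\ref{lemma:from_balls_to_profiles} with this $f$ to any $K_t$-minor-free graph $G$ and any nonempty $A \subseteq V(G)$ yields
\[
|\Pi_{r,G}[V(G) \to A]| \leq f\bigl(r,(r+1)|A|\bigr) = \bigl((r+1)|A|\bigr)^{t-1} = (r+1)^{t-1}|A|^{t-1},
\]
which is exactly the first inequality of the statement. The planar case is then the specialization to $t=5$, since by Wagner's theorem planar graphs are $K_5$-minor-free, giving the bound $(r+1)^4 |A|^4$.

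There is no real obstacle here: the whole content of the corollary is packaged in the two previous statements, and the only thing to verify carefully is the closure of $\mathcal{C}$ under pendant additions, which is immediate from the definition of a minor.
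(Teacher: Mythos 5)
Your proof is correct and follows exactly the route the paper intends: the paper derives this corollary by "combining Lemma~\ref{lemma:from_balls_to_profiles} with Corollary~\ref{corollary:bound_polynomial_in_A}" without further detail, and you have simply filled in the routine verifications (closure of $K_t$-minor-free graphs under pendant additions, and the choice $f(r,m)=m^{t-1}$). No issues.
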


We remark that a similar bound for metric dimension is proved in~\cite{beaudou_bounding_2018} using similar methods.

Using Lemma~\ref{lemma:VC_dim_outer_face_at_most_3}, we also deduce the following
bound.

\begin{corollary}\label{corollary:bound_profiles_outer_face}
If $G$ is a plane graph and $A \subseteq V(G)$ is non empty and is included in the
outerface of $G$, then for every integer $r \geq 0$
\[
|\Pi_{r,G}[V(G) \to A]| \leq (r+1)^3 |A|^3.
\]
\end{corollary}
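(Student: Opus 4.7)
The plan is to combine Lemma~\ref{lemma:VC_dim_outer_face_at_most_3} with the pendant-path gluing trick used in the proof of Lemma~\ref{lemma:from_balls_to_profiles}, taking care that the construction preserves both planarity and the property that all ``marked'' vertices sit on the outer face. Concretely, I want to build an auxiliary plane graph $G'$ in which a set $A'$ of size $(r+1)|A|$ lies on the outer face, then apply Lemma~\ref{lemma:VC_dim_outer_face_at_most_3} together with Lemma~\ref{lemma:sauer_shelah} to $A'$, and finally read off the desired bound on profiles in the original graph $G$.

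For the construction, I would proceed as in the proof of Lemma~\ref{lemma:from_balls_to_profiles}: for each $a\in A$, attach a new path $a_0 a_1 \ldots a_{r-1} a_r$ of length $r$ with $a_r = a$, and set $A' = \{a_i \mid a\in A,\ 0\leq i\leq r\}$, so that $|A'| = (r+1)|A|$. The key point specific to this corollary is that the pendant paths must be drawn so that $A'$ still lies on the outer face. This is straightforward: since each $a\in A$ is incident to the outer face of $G$, we can draw the path $a_0 a_1 \ldots a_{r-1} a$ as a simple curve leaving $a$ into the outer face, and by choosing these curves pairwise disjoint we obtain a plane graph $G'$ whose outer face still contains all of $A$ and now also all the new path vertices. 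Hence $A'$ is entirely on the outer face of $G'$.

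With $G'$ and $A'$ in hand, Lemma~\ref{lemma:VC_dim_outer_face_at_most_3} gives that the set system $\{N^r_{G'}[v]\cap A' \mid v\in V(G')\}$ has VC-dimension at most $3$, so by Lemma~\ref{lemma:sauer_shelah} (applied with $d=3$) its size is at most $|A'|^3 = (r+1)^3|A|^3$. To transfer back to profiles in $G$, I would reuse the observation from the proof of Lemma~\ref{lemma:from_balls_to_profiles}: for any $u\in V(G)$ and $a\in A$, one has $\pi_{r,G}[u\to A](a) = \min\{i \mid a_i\in N^r_{G'}[u]\}$ (with $\min\emptyset = +\infty$), so $N^r_{G'}[u]\cap A'$ determines $\pi_{r,G}[u\to A]$. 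Therefore $|\Pi_{r,G}[V(G)\to A]|$ is bounded by $|\{N^r_{G'}[v]\cap A'\mid v\in V(G')\}| \leq (r+1)^3|A|^3$. The only point requiring care in this plan is the planar embedding step above; everything else is a direct composition of results already established in the section.
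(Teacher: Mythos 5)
Your proposal is correct and follows essentially the same route as the paper: build $G'$ by attaching pendant paths of length $r$ drawn into the outer face so that $A'$ of size $(r+1)|A|$ lies on the outerface, then apply Lemma~\ref{lemma:VC_dim_outer_face_at_most_3} together with Lemma~\ref{lemma:sauer_shelah} and transfer back to profiles in $G$. The paper's proof is exactly this, only stated more tersely.
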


\begin{proof}
Consider $G'$ as in the proof of Lemma~\ref{lemma:from_balls_to_profiles}. 
Observe that $G'$ is planar and can be drawn in the plane in such a way that $A'$ is included in its outerface.
Hence, by Lemma~\ref{lemma:VC_dim_outer_face_at_most_3} and Lemma~\ref{lemma:sauer_shelah}, 
\[
|\Pi_{r,G}[V(G) \to A]| \leq |\{ A' \cap N_{G'}^r[u] \mid u \in V(G)\}| \leq |A'|^3
\]
and the result follows as $|A'| = (r+1)|A|$.
\end{proof}

Corollaries~\ref{corollary:bound_polynomial_in_A_profiles} and~\ref{corollary:bound_profiles_outer_face} are our base blocks to derive polynomial bounds on the profile and neighborhood complexities. 
Most of the work will consist in reducing to the case where $A$ has size polynomial in $r$.

\subsection{Guarding sets}\label{subsection:guarding_sets}


\begin{definition}\label{definition:guarding_set}
Let $G$ be a graph, let $A \subseteq V(G)$,
and let $r,p\geq 0$ be integers.
We say that a family $\mathcal{S} \subseteq \mathcal{P}(V(G))$ is
an {\em $(r,p)$-guarding set for $A$ in $G$} if
\begin{enumerate}[label=(\roman*)]
\item $|S| \leq p$ for every $S \in \mathcal{S}$, and
\item for every vertex $v \in V(G)$, there exists $S \in \mathcal{S}$
    such that $S$ intersects every $(v,A)$-path of length at most $r$ in $G$ (if any).
    \label{item:def_guarding_cut_condition}
\end{enumerate}
\end{definition}

Typically we will be looking for an $(r,poly(r))$-guarding set of size $poly(r)|A|$.
This is a useful tool to bound the profile complexity thanks to the following property.

\begin{lemma}\label{lem:guarding_family_imply_small_NC}
Let $G$ be a graph, let $A \subseteq V(G)$, let $r,p\geq 0$ be integers, and let $\mathcal{S}$ be an $(r,p)$-guarding set for $A$. 
Suppose that, for some non-decreasing function $f$,  
\[
|\Pi_{r,G}[V(G) \to A']| \leq f(r,|A'|)
\]
for every $A' \subseteq V(G)$. 
Then
\[
|\Pi_{r,G}[V(G) \to A]| \leq f(r,p)|\mathcal{S}|.
\]
\end{lemma}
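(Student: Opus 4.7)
The plan is to use the guarding set to partition $V(G)$ into classes associated with each $S \in \mathcal{S}$, and apply the hypothesis with $A' := S$ to bound the number of profiles contributed by each class. The central observation underlying this reduction is a ``transitivity'' of profiles already alluded to in the preliminaries: if $S \in \mathcal{S}$ guards $v$ with respect to $A$ at distance $r$, then $\pi_{r,G}[v \to A]$ is entirely determined by the pair consisting of $S$ and the capped profile $\pi_{r,G}[v \to S]$.

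To formalize this, I would fix for each $v \in V(G)$ a witness $S_v \in \mathcal{S}$ of condition~\ref{item:def_guarding_cut_condition} of Definition~\ref{definition:guarding_set}. For any $a \in A$ with $\dist_G(v,a) \leq r$, a shortest $(v,a)$-path is a $(v,A)$-path of length at most $r$, so it meets $S_v$ at some vertex $s$; the triangle inequality then forces
\[
\dist_G(v,a) = \dist_G(v,s) + \dist_G(s,a) = \min_{s' \in S_v}\bigl(\dist_G(v,s') + \dist_G(s',a)\bigr).
\]
Conversely, if $\dist_G(v,a) > r$ then the minimum on the right (with the convention that infinite terms give $+\infty$) also exceeds $r$, since otherwise the triangle inequality would yield $\dist_G(v,a) \leq r$. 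Capping at $r$, this expresses $\pi_{r,G}[v\to A]$ as an explicit function of $S_v$, the capped profile $\pi_{r,G}[v\to S_v]$, and the fixed numbers $\dist_G(s',a)$ for $s' \in S_v$ and $a \in A$. In particular, two vertices $v,v'$ with $S_v=S_{v'}=S$ and the same capped profile on $S$ share the same capped profile on $A$.

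The bound now follows by bookkeeping. For each $S \in \mathcal{S}$, applying the hypothesis with $A' := S$, together with $|S| \leq p$ and the monotonicity of $f$, gives at most $f(r,p)$ capped profiles $\pi_{r,G}[v \to S]$ that are not identically $+\infty$; and a vertex $v$ guarded by $S$ whose profile on $S$ is identically $+\infty$ necessarily has profile on $A$ identically $+\infty$ as well (by the cut property), so contributes nothing to $\Pi_{r,G}[V(G) \to A]$. Hence at most $f(r,p)$ distinct elements of $\Pi_{r,G}[V(G) \to A]$ arise from vertices $v$ with $S_v = S$, and summing over $S \in \mathcal{S}$ gives the claimed bound $f(r,p)|\mathcal{S}|$. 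There is no genuine obstacle here: the only mildly delicate point is the careful handling of the capping and the omission of the all-$+\infty$ profile, which is precisely why working with profiles rather than ball intersections makes the argument this clean.
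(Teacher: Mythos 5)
Your proof is correct and follows essentially the same route as the paper's: cover $V(G)$ by the sets of vertices guarded by each $S\in\mathcal{S}$, observe that for a guarded vertex the capped profile on $A$ is determined by the capped profile on $S$ via $\mathrm{Cap}_r\circ\min_{s\in S}(\dist_G(v,s)+\dist_G(s,a))$, and apply the hypothesis with $A'=S$. Your treatment of the capping and of the all-$+\infty$ profile is if anything slightly more explicit than the paper's.
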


\begin{proof}
For every $S \in \mathcal{S}$, let $V_S$ be the set of vertices $v \in V(G)$
such that $S$ intersects every shortest $(v,a)$-path in $G$ for all $a \in A \cap
N^r[v]$.
The family $\{V_S\}_{S \in \mathcal{S}}$ covers $V(G)$
by~\ref{item:def_guarding_cut_condition}, 
hence $|\Pi_{r,G}[V(G) \to A]| \leq \sum_{S \in \mathcal{S}} |\Pi_{r,G}[V_S \to A]|$.

Now fix some $S \in \mathcal{S}$, and let us bound $|\Pi_{r,G}[V_S \to A]|$.
Let $v \in V_S$. For every $a \in A$ we have
\[
\begin{split}
\pi_{r,G}[v \to a] 
&= \mathrm{Cap}_r \circ \min_{s \in S}(\dist_G(v,s) + \dist_G(s,a)) \\
&= \mathrm{Cap}_r \circ \min_{s \in S}(\pi_{r,G}[v \to s] + \dist_G(s,a)).\\
\end{split}
\]

Therefore, $\pi_{r,G}[v \to A]$ is fully determined by $\pi_{r,G}[v \to S]
\in \Pi_{r,G}[V_S \to S]$. But by assumption, $|\Pi_{r,G}[V_S \to S]| \leq f(r,|S|)
\leq f(r,p)$. Hence $|\Pi_{r,G}[V(G) \to A]| \leq |\mathcal{S}| f(r,p)$ as claimed.
\end{proof}

We remark that Lemma~\ref{lem:guarding_family_imply_small_NC} still holds if
we relax the condition~\ref{item:def_guarding_cut_condition} in the definition of guarding sets by:
{\it
\begin{enumerate}
    \item[(ii')] for every $v \in V(G)$, there exists $S \in \mathcal{S}$ such that
    for every $a \in A$, either $\dist_G(v,a) > r$, or $S$ intersects at least one
    shortest $(v,a)$-path in $G$.
\end{enumerate}
}
However, all our constructions of guarding sets will satisfy~\ref{item:def_guarding_cut_condition} 
so we prefer to keep this stronger definition.

\subsection{Weak coloring numbers}

Given a graph $G$, a linear ordering $<$ on $V(G)$, an integer $r\geq 0$, and two vertices $u,v \in V(G)$,
we say that $v$ is \emph{$r$-weakly reachable} from $u$ if there exists a
$(u,v)$-path $P$ of length at most $r$ such that the minimum of $P$ for $<$ is $v$. 
We denote by $\wreach_r[G,<,u]$ the set of $r$-weakly reachable vertices from $u$
in $G$ for $<$. 
(Observe that this set contains $u$.)
The \emph{$r$-weak coloring number} of a graph $G$ is the minimum over all linear
orderings $<$ of $V(G)$ of the quantity $\max_{u \in V(G)} |\wreach_r[G,<,u]|$.

Weak coloring numbers give another characterization of graph classes with bounded expansion: Zhu~\cite{zhu_coloring_2009} proved that a class 
$\mathcal{C}$ of graphs has bounded expansion if and only if, for every $r\geq 0$,  the $r$-weak coloring number of every graph in $\mathcal{C}$ is bounded by a function of $r$ (which depends only on $\mathcal{C}$).

Weak coloring numbers will be a useful tool to design guarding sets when there is
a suitable tree decomposition, in particular in the proofs of
Theorems~\ref{theorem:guarding_bounded_treewidth}
and~\ref{theorem:guarding_product_general_form}.
We will use the following result on weak coloring numbers. 
(Let us recall that, given a tree decomposition $(T,(X_z)_{z \in V(T)})$ of a graph $G$, we consider $T$ to be rooted, and for $v\in V(G)$, we denote by $t_v$ the root of the subtree of $T$ induced by $v$.) 

\begin{theorem}[Grohe, Kreutzer, Rabinovich, Siebertz, and Stavropoulos~\cite{grohe_coloring_2018}]\label{theorem:wcol_bounded_tw} 
Let $t, r$ be nonnegative integers, let $G$ be a graph, and let $(T,(X_z)_{z \in V(T)})$ be a tree decomposition of $G$ of width at most $t$. 
Let $<$ be a linear ordering of $V(G)$ such that
for all $v,w \in V(G)$, if $t_v <_T t_w$, then $v<w$.
Then, for every vertex $v \in V(G)$, 
\[
|\wreach_r[G,<,v]| \leq \binom{r+t}{t} = \bigO_t(r^t).
\]
\end{theorem}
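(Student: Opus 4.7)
Fix $v \in V(G)$ and the goal of bounding $|\wreach_r[G,<,v]| \leq \binom{r+t}{t}$.

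\textbf{Structural lemma.} The first step is to show that for every $w \in \wreach_r[G,<,v]$, we have $t_w \leq_T t_v$. Pick a witnessing path $P = v_0 v_1 \cdots v_k$ with $v_0 = v$, $v_k = w$, $k \leq r$, and $w$ the $<$-minimum of $P$. For every edge $v_{i-1}v_i$, the tree decomposition property $V(T_{v_{i-1}}) \cap V(T_{v_i}) \neq \emptyset$ forces $t_{v_{i-1}}$ and $t_{v_i}$ to be comparable in $<_T$ (both are ancestors of any common node). Therefore $\bigcup_{i=0}^k T_{v_i}$ is a connected subtree of $T$, and its root equals $t_{v_{i^*}}$ for some index $i^*$; then $t_{v_{i^*}} \leq_T t_{v_i}$ for every $i$, and in particular $t_{v_{i^*}} \leq_T t_w$. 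Since $w$ is the $<$-minimum of $P$, we have $v_{i^*} \geq w$, and the compatibility assumption on $<$ gives $t_{v_{i^*}} \not<_T t_w$. Combined with $t_{v_{i^*}} \leq_T t_w$, this yields $t_{v_{i^*}} = t_w$, so $t_w \leq_T t_v$. Thus every $w \in \wreach_r[G,<,v]$ lies in some bag $X_{t_w}$ where $t_w$ is a (weak) ancestor of $t_v$ in $T$.

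\textbf{Counting.} To obtain the sharp bound $\binom{r+t}{t}$, the plan is to prove, by induction on $k$, the refined estimate $R(k) \leq \binom{k+t-1}{t-1}$, where $R(k) = |\wreach_k[G,<,v] \setminus \wreach_{k-1}[G,<,v]|$ (with $R(0) = 1$); the theorem then follows from the hockey-stick identity $\sum_{k=0}^{r} \binom{k+t-1}{t-1} = \binom{r+t}{t}$. The case $k = 1$ is immediate: if $w$ is a $<$-smaller neighbor of $v$, then $v$ and $w$ share a bag, and the structural lemma forces this bag to contain $t_v$, so $w \in X_{t_v} \setminus \{v\}$ and $R(1) \leq t = \binom{t}{t-1}$. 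For $k \geq 2$, I would analyze the canonical shortest witnessing path $v = v_0, v_1, \ldots, v_k = w$ based on the position of $v_1$: if $v_1 \in X_{t_v}$ one uses the $t$ available slots in the top bag together with the recursion on $\wreach_{k-1}[G,<,v_1]$; if $v_1 \notin X_{t_v}$, then by a variant of the structural argument $t_{v_1}$ is a strict descendant of $t_v$, and one iterates the recursion on the tail subpath, tracking which ancestor of $t_v$ is eventually reached as $t_w$.

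\textbf{Main obstacle.} The heart of the proof is the inductive counting. The danger is overcounting: different first steps $v_1$ may produce overlapping sets of weakly reachable $w$'s, so a naive recursion yields a bound like $(t+1)^k$ instead of $\binom{k+t-1}{t-1}$. The key combinatorial content is that the $t+1$ bag slots and the $r$ path-length budget combine in such a way that the valid bag-and-slot traces of $w$'s are in bijection with monotone lattice paths in a $(t+1) \times r$ grid, whose count is exactly $\binom{r+t}{t}$. Turning this intuition into a clean injection, or equivalently into a tight amortized inductive recursion matching the Pascal identity $\binom{k+t-1}{t-1} = \binom{k-1+t-1}{t-1} + \binom{k+t-2}{t-2}$, is where the technical work of the proof lies.
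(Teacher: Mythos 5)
First, a point of reference: the paper does not prove Theorem~\ref{theorem:wcol_bounded_tw} at all --- it is imported verbatim from Grohe et al.~\cite{grohe_coloring_2018} --- so your proposal has to stand entirely on its own. Your structural lemma does: the union $\bigcup_i T_{v_i}$ along a witnessing path is a connected subtree of $T$ whose root is $t_{v_{i^*}}$ for some $i^*$, and the minimality of $w$ on the path together with the compatibility of $<$ forces $t_{v_{i^*}}=t_w$, hence $t_w\leq_T t_v$. The base cases $R(0)=1$ and $R(1)\leq t$ are also correct (a smaller neighbour of $v$ must lie in $X_{t_v}$).

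The genuine gap is that the counting step --- which is the entire content of the $\binom{r+t}{t}$ bound --- is never carried out. You announce the plan ($R(k)\leq\binom{k+t-1}{t-1}$ plus the hockey-stick identity), correctly diagnose that a naive recursion over the first step $v_1$ overcounts to $(t+1)^k$, and then state that producing the injection or the tight recursion ``is where the technical work of the proof lies.'' That is precisely the work the theorem consists of, and it is not routine: the structural lemma by itself gives no bound, since it does not control how far up the root path $x_0=t_v,x_1,x_2,\dots$ the node $t_w$ can sit. One is tempted to argue that a witnessing path of length at most $r$ must meet each adhesion set $X_{x_{i-1}}\cap X_{x_i}$ for $i\leq j$ and hence $j\leq r$, but a single path vertex $u$ whose subtree $T_u$ spans many consecutive nodes of the root path covers many adhesion sets at once --- which is exactly why $|\wreach_r|$ can genuinely grow like $r^t$ and why the induction is delicate. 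The argument in~\cite{grohe_coloring_2018} resolves this with a double induction (on the remaining radius and on the number of already-consumed vertices of the current top bag) realizing Pascal's rule $\binom{r+t}{t}=\binom{r-1+t}{t}+\binom{r+t-1}{t-1}$; you name the right identity but never define quantities that satisfy this recursion, and your per-level refinement $R(k)\leq\binom{k+t-1}{t-1}$ is itself asserted rather than proved. As written, the proposal is a correct preliminary reduction followed by an unproven combinatorial claim, so it does not constitute a proof of the theorem.
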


\section{Proper minor-closed classes of graphs}
\label{section:proper_minor_closed}

In this section we prove Theorem~\ref{thm:profile_complexity_excluded_minor}, which states that proper minor-closed graph classes  have polynomial profile complexity. 
Our proof is a modification of the proof in~\cite{RSS19} that graph classes with bounded expansion have bounded neighborhood complexity. 
The latter proof uses weak coloring numbers as their main tool to bound neighborhood complexity. 
While weak coloring numbers can be exponential for graph classes with bounded expansion, it is known that graphs excluding a fixed minor have polynomial weak coloring numbers:
\begin{theorem}[\cite{heuvel_generalised}]\label{thm:wcol_Kt_minor_free}
    For every positive integers $t$ and $r$ with $t \geq 4$, and for every $K_t$-minor-free graph $G$,
    \[
    \wcol_r(G) \leq \binom{r+t-2}{t-2} (t-3) (2r+1) \leq 2(t-3)(r+1)^{t-1}.
    \]
\end{theorem}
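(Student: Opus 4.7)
The plan is to combine a structural decomposition of $K_t$-minor-free graphs into \emph{geodesic parts} with the treewidth-based bound on weak coloring numbers given by Theorem~\ref{theorem:wcol_bounded_tw}. The heart of the argument is a chordal partition lemma: every $K_t$-minor-free graph $G$ admits a partition $V(G) = V_1 \sqcup \cdots \sqcup V_p$ together with an ordering of the parts such that (a) each part $V_i$ is the union of at most $t-3$ shortest paths of the subgraph $G[V_i \cup V_{i+1} \cup \cdots \cup V_p]$, and (b) the quotient graph $H$ obtained by contracting each $V_i$ to a single vertex is chordal with clique number at most $t-1$, and hence has treewidth at most $t-2$. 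Such a partition is built by a recursive peeling procedure: at each stage, pick a root vertex $v$ in the current remaining graph, grow a BFS tree from $v$, extract the geodesic from $v$ together with the ``forced'' geodesics that must connect $v$ to already-extracted parts adjacent to the current component, and recurse on each connected component of what remains. The bound of $t-3$ on the number of geodesics per part is exactly what prevents a $K_t$-minor from being created in the contraction.

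Given this partition, I would fix a linear order $<$ on $V(G)$ as follows. First, I would choose a tree decomposition of $H$ of width at most $t-3$ and order the parts $V_1, \ldots, V_p$ so that $V_i$ precedes $V_j$ whenever the root-node of $V_i$'s subtree is an ancestor of that of $V_j$ (so that the induced order $<_H$ on the parts satisfies the hypothesis of Theorem~\ref{theorem:wcol_bounded_tw}). Then, within each part $V_i$, I would order the vertices by their position along the geodesics composing $V_i$, breaking ties between geodesics by any fixed rule.

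The crucial step is then to bound $|\wreach_r[G,<,v]|$ for a fixed $v \in V_i$. Let $u \in \wreach_r[G,<,v]$ with $u \in V_j$, and let $P$ be a $(v,u)$-path of length $\leq r$ on which $u$ is $<$-minimum. Projecting $P$ to the quotient $H$ yields a walk of length $\leq r$ from $V_i$ to $V_j$, and because $u$ is $<$-minimum on $P$ and $<$ refines $<_H$, the node $V_j$ must be $<_H$-minimum on this walk; hence $V_j \in \wreach_r[H, <_H, V_i]$. By Theorem~\ref{theorem:wcol_bounded_tw} applied to $H$ there are at most $\binom{r+t-2}{t-2}$ choices for $V_j$. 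Inside a fixed $V_j$, the subpath of $P$ from its \emph{last} entry into $V_j$ up to $u$ has length $\leq r$ and lies entirely in $V_j$, so $u$ is reachable within $V_j$ by a walk of length $\leq r$ from the entry vertex; since $V_j$ consists of at most $t-3$ shortest paths in the appropriate subgraph, and a ball of radius $r$ along a single such geodesic has at most $2r+1$ vertices, there are at most $(t-3)(2r+1)$ candidates for $u$ inside $V_j$. Multiplying gives $\wcol_r(G) \leq \binom{r+t-2}{t-2}(t-3)(2r+1)$. The second inequality then follows from the elementary estimate $\binom{r+t-2}{t-2} \leq (r+1)^{t-2}$ together with $2r+1 \leq 2(r+1)$.

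The main obstacle is the chordal partition lemma itself. Proving that $K_t$-minor-freeness forces a geodesic-based partition whose quotient has treewidth at most $t-2$ requires a delicate induction: one must track how every new geodesic added to a part is forced by previously-extracted parts adjacent to the current component, and argue that if more than $t-3$ geodesics are required, then together with the root and the contracted neighbours one can exhibit a $K_t$-minor. A secondary subtlety, already present in Step~3, is to verify that the geodesic structure of the parts is taken in the \emph{right} subgraph (the union of the current part and later ones, not $G$ itself), so that the $2r+1$ bound on balls along each geodesic transfers correctly to balls in~$G$.
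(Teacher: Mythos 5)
First, a point of reference: the paper does not prove this statement at all — it is imported verbatim from \cite{heuvel_generalised} — so the only meaningful comparison is with the proof in that reference. Your plan is essentially a reconstruction of it: partition the $K_t$-minor-free graph into parts, each a union of at most $t-3$ geodesics of the corresponding suffix graph, with a chordal quotient of clique number at most $t-1$ (hence treewidth at most $t-2$ — not $t-3$ as you write, although the binomial $\binom{r+t-2}{t-2}$ you then use is the right one), and combine the bound of Theorem~\ref{theorem:wcol_bounded_tw} on weakly reachable parts with a $(2r+1)$ bound per geodesic. The global counting and the final estimate $\binom{r+t-2}{t-2}\le (r+1)^{t-2}$, $2r+1\le 2(r+1)$ are fine.

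As a proof, however, there is a genuine gap, which you acknowledge yourself: the geodesic/chordal partition lemma is the technical heart of the cited theorem, and you only gesture at a ``recursive peeling'' and assert that needing more than $t-3$ geodesics would produce a $K_t$-minor. Making this work requires maintaining the inductive invariant that the previously extracted parts attached to the current component are connected and pairwise adjacent (so that, by minor-freeness, they form a quotient clique of size at most $t-2$), and that at most $t-3$ shortest paths inside the current component suffice so that every new component again sees such a clique; none of this is argued, and it is where all the work lies. In addition, your per-part count is incorrect as stated: the ``entry vertex'' of $P$ into $V_j$ varies with $u$, so balls of radius $r$ around entry vertices do not confine the candidates to $2r+1$ vertices per geodesic. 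The correct argument is centered at $v$: since $u$ is the $<$-minimum of $P$, the whole path stays in the suffix graph $G_j=G[V_j\cup\cdots\cup V_p]$, hence $\dist_{G_j}(v,u)\le r$, and since each geodesic of $V_j$ is shortest in $G_j$, any two candidates on it are at most $2r$ apart along it. This in turn forces you to order the parts by their construction order (which is compatible with a tree decomposition of the chordal quotient, so Theorem~\ref{theorem:wcol_bounded_tw} still applies), rather than by an arbitrary tree decomposition of $H$: with your reordering, the union of parts $\ge_H V_j$ need not be contained in the suffix graph in which the geodesics of $V_j$ are shortest, and the $2r+1$ window argument breaks — precisely the subtlety you flag at the end but do not resolve.
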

However, this fact alone is not enough to turn the proof from~\cite{RSS19} into a proof that graphs excluding a minor have polynomial neighborhood complexity. 
This is because in one of the last steps of the proof in~\cite{RSS19}, one needs to enumerate all subsets of a given set of vertices, which inherently results in a bound on neighborhood complexity that is exponential. 
In our proof, we avoid this costly step and use instead an argument based on VC-dimension to finish the proof. 
 
Theorem~\ref{thm:profile_complexity_excluded_minor}  follows from the following result. 

\begin{theorem}\label{thm:NC_Kt_Minor_free}
    For every positive integers $t$ and $r$ with $t \geq 4$, for every $K_t$-minor-free graph $G$,
    for every set $A$ of vertices of $G$,
    \[
    |\Pi_{r,G}[V(G) \to A]| \leq (2^t(t-3))^t \cdot (r+1)^{t^2-1}|A|.
    \]
\end{theorem}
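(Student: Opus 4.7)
The plan is to adapt the weak-coloring-number approach of Reidl, S\'anchez-Villaamil, and Stravopoulos~\cite{RSS19}, replacing their exponential-in-$\wcol_r$ enumeration step with an application of the polynomial VC-dimension bound from Corollary~\ref{corollary:bound_polynomial_in_A_profiles}. First, apply Theorem~\ref{thm:wcol_Kt_minor_free} at depth $2r$ to fix a linear ordering $<$ of $V(G)$ such that $|\wreach_{2r}[G,<,v]| \leq W := 2(t-3)(2r+1)^{t-1}$ for every $v$. Write $L(v) := \wreach_r[G,<,v]$ and $L'(v) := \wreach_{2r}[G,<,v]$, so that $L(v) \subseteq L'(v)$ and $|L'(v)| \leq W$. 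Set $U := \bigcup_{a \in A} L(a)$; then $|U| \leq W|A|$.

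The standard weak-reachability identity says that for every $v \in V(G)$ and every $a \in A$ with $\dist(v,a) \leq r$, the $<$-minimum vertex $u^*$ of any shortest $(v,a)$-path lies in $L(v) \cap L(a) \subseteq L(v) \cap U$ and satisfies $\dist(v,a) = \dist(v,u^*) + \dist(u^*,a)$. In particular $L(v) \cap U$ is nonempty whenever $\dist(v,A) \leq r$; for such $v$ define the \emph{anchor} $w(v) := \min_<(L(v) \cap U) \in U$.

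The crucial lemma is that $L(v) \cap U \subseteq L'(w(v))$. Indeed, take any $u \in L(v) \cap U$; since $u,w(v) \in L(v)$ there are $(v,u)$- and $(v,w(v))$-paths of length at most $r$ whose $<$-minima are $u$ and $w(v)$ respectively. All vertices of the former are $\geq u \geq w(v)$ (using the minimality of $u$ on its path and the fact that $u \in L(v) \cap U$ implies $u \geq w(v)$), and all vertices of the latter are $\geq w(v)$; concatenating and shortcutting yields a $(w(v),u)$-path of length at most $2r$ in $G$ whose vertices are all $\geq w(v)$, so that $u \in L'(w(v))$. Combined with the first identity, for every $a \in A$
\[
\dist(v,a) = \min_{u \in L(a) \cap L'(w(v))} \bigl(\dist(v,u) + \dist(u,a)\bigr),
\]
capped at $r$. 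The set $L(a) \cap L'(w(v))$ and the distances $\dist(u,a)$ depend only on $a$ and $w(v)$, not on $v$, so the profile $\pi_{r,G}[v \to A]$ is entirely determined by the pair $(w(v),\,\pi_{r,G}[v \to L'(w(v))])$.

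Counting such pairs and applying Corollary~\ref{corollary:bound_polynomial_in_A_profiles} to each $L'(w)$ (of size at most $W$) gives
\[
|\Pi_{r,G}[V(G) \to A]| \leq \sum_{w \in U} |\Pi_{r,G}[V(G) \to L'(w)]| \leq |U| \cdot (r+1)^{t-1} W^{t-1} \leq W^t (r+1)^{t-1} |A|,
\]
and estimating $W \leq 2^t(t-3)(r+1)^{t-1}$ via $2r+1 \leq 2(r+1)$ yields the claimed bound. The main step is the key lemma $L(v) \cap U \subseteq L'(w(v))$, which localizes the information relevant to the profile from depth $r$ (around each $v$) to depth $2r$ (around a single anchor in $U$ of linear size); once this localization is in place, the bound follows directly from Corollary~\ref{corollary:bound_polynomial_in_A_profiles}.
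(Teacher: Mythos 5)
Your overall strategy is the same as the paper's: order the vertices via Theorem~\ref{thm:wcol_Kt_minor_free} at depth $2r$, collect $U=\bigcup_{a\in A}\wreach_r[G,<,a]$, anchor each vertex $v$ at a single element of $U$, guard with the depth-$2r$ weak-reachability set of that anchor, and finish with the VC-dimension bound of Corollary~\ref{corollary:bound_polynomial_in_A_profiles}; the final count is identical. However, your key lemma is false as stated, because you anchor at the \emph{minimum} of $L(v)\cap U$ instead of the \emph{maximum}. By definition, every element of $\wreach_{2r}[G,<,w]$ is the $<$-minimum of a path containing $w$, hence is $\leq w$ in the order; so $L'(w(v))\subseteq\{x\mid x\leq w(v)\}$. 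On the other hand $L(v)\cap U\subseteq\{x\mid x\geq w(v)\}$ by your choice of $w(v)$ as the minimum. The claimed inclusion $L(v)\cap U\subseteq L'(w(v))$ therefore forces $L(v)\cap U=\{w(v)\}$ and fails whenever this set has at least two elements. Concretely, in your concatenation argument the $(w(v),u)$-walk you build has $<$-minimum $w(v)$, not $u$, so it witnesses $w(v)\in\wreach_{2r}[G,<,u]$ --- the reverse of what you need. (A small counterexample: the path $a$--$p$--$v$--$q$--$b$ with $A=\{a,b\}$, $r=2$ and order $p<q<a<b<v$ gives $L(v)\cap U=\{p,q\}$ and $w(v)=p$, but $L'(p)=\{p\}$, so your distance formula returns $+\infty$ for $\dist(v,b)=2$.)

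The fix is exactly the paper's choice: set $w(v):=\max_<(L(v)\cap U)$. Then every $u\in L(v)\cap U$ satisfies $u\leq w(v)$, the concatenation of the $(v,w(v))$-path and the $(v,u)$-path has $<$-minimum $u$ (an endpoint), and $u\in\wreach_{2r}[G,<,w(v)]$ follows. With this single change your localization lemma, the distance formula, and the final count all go through and reproduce the paper's bound $(2^t(t-3))^t(r+1)^{t^2-1}|A|$ by essentially the same argument as in the paper.
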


\begin{proof}
     Using Theorem~\ref{thm:wcol_Kt_minor_free}, 
    let $<$ be an ordering of $V(G)$ witnessing the fact that 
    \[
    \wcol_{2r}(G) \leq 2(t-3)(2r+1)^{t-1} \leq 2^t(t-3) (r+1)^{t-1}.
    \]
    Let \[
    B = \bigcup_{a \in A} \wreach_r[G,<,a].
    \]
    Observe that $|B| \leq 2^t(t-3)(r+1)^{t-1} |A|$.
    For every vertex $b \in B$, let 
    \[
    S_b = \wreach_{2r}[G,<,b].
    \]
    For every vertex $u \in V(G)$ at distance at most $r$ from a vertex in $A$, let
    \[
    \phi(u) = \max_{<} \big(B \cap \wreach_r[G,<,u]\big).
    \]
    \begin{claim}
        For every vertex $u \in V(G)$ at distance at most $r$ from $A$, $S_{\phi(u)}$ intersects every $(u,A)$-path of length at most $r$ in $G$.
    \end{claim}
    \begin{proof}
        Let $Q$ be a path from $u$ to $\phi(u)$ witnessing the fact that $\phi(u) \in \wreach_r[G,<,u]$.
        Consider a path $P$ of length at most $r$ from $u$ to some vertex $a \in A$.
        Let $w = \min_< V(P)$, let $P_1$ be the subpath of $P$ from $u$ to $w$ and let $P_2$ the subpath of $P$ from $w$ to $a$.
        Then $P_1$ witnesses the fact that $w \in \wreach_r[G,<,u]$ and $P_2$ witnesses the fact that $w \in \wreach_r[G,<,a] \subseteq B$.
        Hence $w \in B \cap \wreach_r[G,<,u]$.
        In particular, $w \leq \phi(u)$ by definition of $\phi(u)$.
        Then the concatenation of $Q$ with $P_1$ witnesses the fact that $w \in \wreach_{2r}[G,<,\phi(u)] = S_{\phi(u)}$, and so
        $S_{\phi(u)}$ intersects $V(P)$ as claimed.
    \end{proof}

    Hence $\{S_{w} \mid w \in B\}$ is an $(r,2^t(t-3)(r+1)^{t-1})$-guarding set since $|S_w| \leq 2^t(t-3)(r+1)^{t-1}$ for every $w \in B$. 
    
    Now, we use the fact that $K_t$-minor-free graphs have bounded VC-dimension: By Corollary~\ref{corollary:bound_polynomial_in_A_profiles}, 
    \[|
    \Pi_{r,G}[V(G) \to S]| \leq (r+1)^{t-1}|S|^{t-1} \leq (2^t(t-3))^{t-1}(r+1)^{t^2-t}
    \] 
    for every set $S$ of at most $2^t(t-3)(r+1)^{t-1}$ vertices of $G$. 
    Using Lemma~\ref{lem:guarding_family_imply_small_NC}, we deduce that    
    \[
    \begin{split}
    |\Pi_{r,G}[V(G) \to A]| &\leq (2^t(t-3))^{t-1}(r+1)^{t^2-t} \cdot |\{S_{w} \mid w \in B\}|  \\
    &\leq (2^t(t-3))^{t-1}(r+1)^{t^2-t} \cdot |B| \\
    &\leq (2^t(t-3))^{t-1}(r+1)^{t^2-t} \cdot 2^t(t-3) (r+1)^{t-1} |A| \\
    &= (2^t(t-3))^t (r+1)^{t^2-1} |A|,
    \end{split}
    \]
    which concludes the proof of the theorem.
\end{proof}

\section{Graphs of bounded treewidth}\label{section:bounded_treewidth}

Since graphs of treewidth at most $t$ are $K_{t+2}$-minor-free, Theorem~\ref{thm:NC_Kt_Minor_free} implies that these graphs have profile complexity in $\bigO_t(r^{t^2+2t})$.
In this section, we show the more precise upper bound $\bigO_t(r^{2t})$.

\begin{theorem}\label{theorem:guarding_bounded_treewidth}
Let $t, r$ be nonnegative integers, let $G$ be a graph of treewidth at most $t$, and let $A \subseteq V(G)$.  
Then, there is an $(r,t)$-guarding set for $A$ in $G$ of size at most $(t+1)\binom{r+t}{t}|A|$.
\end{theorem}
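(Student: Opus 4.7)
The plan is to mirror the proof of Theorem~\ref{thm:NC_Kt_Minor_free} and exploit the polynomial weak coloring number provided by Theorem~\ref{theorem:wcol_bounded_tw}, while replacing the large guards $\wreach_{2r}[G,<,b]$ used there by small guards coming from the bag structure of a tree decomposition.

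First I would fix a tree decomposition $(T,(X_z)_{z\in V(T)})$ of $G$ of width at most $t$, rooted at some node, and let $<$ be a linear ordering of $V(G)$ compatible with $T$ in the sense that $t_u <_T t_v$ implies $u<v$. By Theorem~\ref{theorem:wcol_bounded_tw}, $|\wreach_r[G,<,v]| \leq \binom{r+t}{t}$ for every $v$. Setting $B = \bigcup_{a \in A}\wreach_r[G,<,a]$ we have $|B| \leq \binom{r+t}{t}|A|$, and I would define
\[
\mathcal{S} = \{X_{t_b} \setminus \{x\} : b \in B,\ x \in X_{t_b}\},
\]
which has at most $(t+1)|B| \leq (t+1)\binom{r+t}{t}|A|$ elements, each of size at most $t$ since $|X_{t_b}|\leq t+1$.

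To verify that $\mathcal{S}$ is $(r,t)$-guarding, for each $v$ at distance at most $r$ from $A$ I would set $b = \phi(v) := \max_<(B \cap \wreach_r[G,<,v])$, which is well-defined, and then produce an $x \in X_{t_b}$ such that $X_{t_b}\setminus\{x\}$ hits every $(v,A)$-path $P$ of length at most $r$. As in the proof of Theorem~\ref{thm:NC_Kt_Minor_free}, the $<$-minimum $w_P$ of $P$ lies in $B \cap \wreach_r[G,<,v]$, so $w_P \leq_< b$ and hence $t_{w_P}$ is an ancestor of $t_b$ in $T$ (or equal to it). Combining this with the separator property of the tree decomposition---the adhesion $X_{t_b}\cap X_{\mathrm{parent}(t_b)}$ has size at most $t$, does not contain $b$, and separates the subtree rooted at $t_b$ from the rest of $G$---shows that $V(P)\cap X_{t_b}$ is always non-empty: either via an adhesion vertex (when $P$ exits the subtree of $t_b$) or via $w_P$ itself (when $P$ stays inside, in which case $w_P\in X_{t_b}$ by the subtree connectivity of $V(T_{w_P})$).

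The main obstacle is the last substep, selecting a single $x\in X_{t_b}$ that works uniformly against all paths $P$ from $v$ to $A$. I plan to split the analysis according to whether $t_b=t_v$ or $t_b$ is a strict ancestor of $t_v$. In the first case $v$ itself lies in $X_{t_b}$ and on every $P$, so only $x=v$ is potentially problematic and any $x\in X_{t_b}\setminus\{v\}$ succeeds. In the second case, $v\notin X_{t_b}$, and one shows that every such $P$ is forced through the adhesion $X_{t_b}\cap X_{\mathrm{parent}(t_b)}\subseteq X_{t_b}\setminus\{b\}$, so $x=b$ is a safe choice and yields a guard containing the full adhesion. The hardest part will be ruling out pathological behaviour in the second case---paths that touch $b$ without crossing the adhesion---which requires exploiting the constraint that $b$'s neighbours along such a path must share a common bag with $b$ in $T$.
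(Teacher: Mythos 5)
Your construction of the guarding family is essentially the paper's: the same ordering from Theorem~\ref{theorem:wcol_bounded_tw}, the same set $B$ of weakly reachable vertices, and (up to the degenerate case $|X_{t_b}|\le t$, where you should also keep the full bag $X_{t_b}$ itself, e.g.\ when $X_{t_v}=\{v\}$ your family contains only $\emptyset$ from that bag) the same collection of $t$-subsets of the bags $X_{t_b}$, $b\in B$. The identification of the bag $X_{t_b}$ with $b=\phi(v)$ that every short $(v,A)$-path must meet is also correct. The gap is in your Case~B. The claim that every $(v,A)$-path $P$ of length at most $r$ is forced through the \emph{upward} adhesion $X_{t_b}\cap X_{\mathrm{parent}(t_b)}$ is false: the minimum $w_P$ of $P$ may satisfy $t_{w_P}=t_b$, in which case $P$ can stay entirely inside the subtree of $T$ rooted at $t_b$ and meet $X_{t_b}$ only at $b$ itself, so that $x=b$ is precisely the wrong vertex to delete. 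Concretely, take $T$ with root $s$, child $z$, and children $y_1,y_2$ of $z$; bags $X_s=\{q\}$, $X_z=\{q,b,c\}$, $X_{y_1}=\{b,c,v\}$, $X_{y_2}=\{b,c,a\}$; $G$ with edge set $\{vb,\,ba\}$; $A=\{a\}$, $r=2$, and the compatible order $q<c<b<v<a$. Then $B=\wreach_2[G,<,a]=\{a,b\}$ and $\wreach_2[G,<,v]=\{v,b\}$, so $\phi(v)=b$ and $t_b=z$ is a strict ancestor of $t_v=y_1$. Your guard is $X_z\setminus\{b\}=\{q,c\}$, which misses the length-$2$ path $v,b,a$ from $v$ to $A$; note that this path never touches the upward adhesion $X_z\cap X_s=\{q\}$.

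The correct separator, and the one the paper uses, is the \emph{downward} adhesion $X_y\cap X_{t_b}$ where $y$ is the child of $t_b$ on the path towards $t_v$: the vertex $v$ lies only in bags of the subtree rooted at $y$, while $w_P$ lies in a bag at or above $t_b$, so every $(v,w_P)$-subpath of $P$ crosses $X_y\cap X_{t_b}$. One must therefore choose $x\notin X_y\cap X_{t_b}$, and since $b$ may well belong to $X_y$ (as in the example, where the right guard is $X_z\setminus\{q\}=\{b,c\}$), $b$ must in general be \emph{kept}, not deleted. This also explains why the guard assigned to a bag must depend on $v$ (through the child $y$), and why one needs the paper's preprocessing assumption that $|X_y\cap X_z|\le t$ for every tree edge $yz$ so that a $t$-subset containing the downward adhesion exists.
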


\begin{proof}
Consider a tree decomposition $(T,(X_z)_{z \in V(T)})$ of width at most $t$
rooted at an arbitrarily chosen node $s \in V(T)$ and a linear order $<$ on $V(G)$ 
as in Theorem~\ref{theorem:wcol_bounded_tw}.
In addition, we suppose that for every edge $yz \in E(T)$, $|X_y \cap X_z| \leq
t$ (otherwise $X_y=X_z$ and we can contract $yz$). 
As before, we denote by $t_v$ the root of the subtree $T_v$ for every vertex $v \in V(G)$.

Let $A' = \bigcup_{a \in A} \wreach_r[G,<,a]$ and $B = \{t_v \mid v \in A'\}$.
Finally, we set $\mathcal{S} = \{X_z \mid z \in B, |X_z|\leq t \} \cup \bigcup_{z \in B, |X_z|=t+1} \binom{X_z}{t}$.
Clearly $|\mathcal{S}| \leq \binom{t+1}{t} |A| \cdot \max_{a \in A} \wreach_r[G,<,a]$ and so by Theorem~\ref{theorem:wcol_bounded_tw} $|\mathcal{S}| \leq (t+1)\binom{r+t}{t}|A|$.
Moreover, for every $S \in \mathcal{S}$, $|S| \leq t$, so it only remains
to show that for every vertex $v \in V(G)$, there exists $S \in \mathcal{S}$
that intersects every $(v,A)$-path of length at most $r$ in $G$. 
Let thus $v \in V(G)$, and assume that $v$ is at distance at most $r$ from $A$ (otherwise, there is nothing to show). 

We find the desired set $S$ as follows:
Consider the unique $(t_v,s)$-path $P$ in $T$, and let $z$ be the first node in $P$ that belongs to $B$. 
Such a node $z$ exists, which can be seen as follows: 
Consider a $(v,A)$-path $Q$ of length at most $r$ in $G$. Let $w$ be the minimum of $Q$. 
Then $w \in \wreach_r[G,<,a]$ for some $a \in A$, 
which implies $t_w \in B$. 
Furthermore, $\{y \in V(T) \mid V(Q) \cap X_y \neq \emptyset\}$ induces a connected subgraph of $T$, thus a subtree of $T$, and by our choice of the ordering $<$, $t_w$ is the root of this subtree. 

If $z=t_v$, then take for $S$ an arbitrary subset of $X_{t_v}$ of size $t$ that
contains $v$ if $|X_{t_v}| =t+1$, or $S = X_{t_v}$ if $|X_{t_v}| \leq t$, 
and the result is clear.
Otherwise, let $y$ be the predecessor of $z$ in $P$, and take
$S \subseteq X_z$ of size at most $t$ such that $X_y \cap X_z \subseteq S$ if $|X_z|=t+1$,
and take $S=X_z$ if $|X_z| \leq t$. 
In both cases, $S \in \mathcal{S}$.
Now consider a $(v,a)$-path $Q$ of length at most $r$ in $G$ for some $a \in A$.
Then $Q$ contains a minimal vertex $w$ for $<$, and so $t_w$ is in $P$.
Moreover, $w \in \wreach_r[G,<,a]$ so $w \in A'$ and $t_w \in B$. 
It follows that the node $z$ is on the $(t_v,t_w)$-path in $T$.  
Given our choice of $S$, the set $S$ intersects every $(v,w)$-path, and in particular intersects $Q$.
This concludes the proof that $\mathcal{S}$ is an $(r,t)$-guarding set for
$A$ in $G$ of size at most $(t+1)\binom{r+t}{t}|A|$.
\end{proof}

Combining Theorem~\ref{theorem:guarding_bounded_treewidth} with Lemma~\ref{lem:guarding_family_imply_small_NC} 
and the fact that $|\Pi_{r,G}[V(G) \to A']| \leq (r+2)^{|A'|}$ for every $A' \subseteq V(G)$, 
we deduce that graphs of bounded treewidth have polynomial neighborhood
complexity.

\begin{corollary}\label{corollary:NC_bounded_treewidth}
For every graph $G$ of treewidth at most $t$, every $A \subseteq V(G)$, and every nonnegative integer $r$, 
\[
|\Pi_{r,G}[V(G) \to A]| \leq (t+1)(r+2)^t \binom{r+t}{t} |A| = \bigO_t(r^{2t}|A|)
.
\]
\end{corollary}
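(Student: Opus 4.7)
The plan is to directly combine the guarding-set bound from Theorem~\ref{theorem:guarding_bounded_treewidth} with the general conversion result Lemma~\ref{lem:guarding_family_imply_small_NC}, using the trivial counting bound on profiles of very small sets. There is essentially no real obstacle here: all the work has already been done in Theorem~\ref{theorem:guarding_bounded_treewidth}, and the remainder is a bookkeeping computation.

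First, I would fix $G$, $A$, and $r$ as in the statement, and apply Theorem~\ref{theorem:guarding_bounded_treewidth} to obtain an $(r,t)$-guarding set $\mathcal{S}$ for $A$ in $G$ with $|\mathcal{S}| \leq (t+1)\binom{r+t}{t}|A|$ and with $|S| \leq t$ for every $S \in \mathcal{S}$. Second, I would record the trivial upper bound on profile counts for arbitrary vertex subsets: since a profile $\pi_{r,G}[u \to A']$ is a function from $A'$ to the $(r+2)$-element set $\{0,1,\dots,r,+\infty\}$, we have
\[
|\Pi_{r,G}[V(G) \to A']| \leq (r+2)^{|A'|}
\]
for every $A' \subseteq V(G)$. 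Taking $f(r,m) = (r+2)^m$, this $f$ is non-decreasing in $m$, so the hypothesis of Lemma~\ref{lem:guarding_family_imply_small_NC} is satisfied.

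Third, applying Lemma~\ref{lem:guarding_family_imply_small_NC} with this $f$, with the guarding set $\mathcal{S}$ and with $p = t$, I get
\[
|\Pi_{r,G}[V(G) \to A]| \leq f(r,t)\cdot |\mathcal{S}| \leq (r+2)^t \cdot (t+1)\binom{r+t}{t}|A|.
\]
Finally, I would note that $(r+2)^t = \bigO_t(r^t)$ and $\binom{r+t}{t} = \bigO_t(r^t)$, so the whole expression is $\bigO_t(r^{2t}|A|)$, which yields the corollary. The only thing worth double-checking is the monotonicity hypothesis on $f$ in Lemma~\ref{lem:guarding_family_imply_small_NC} and the fact that $f(r,|S|) \leq f(r,p)$ is indeed what is being used, which is immediate for $f(r,m)=(r+2)^m$.
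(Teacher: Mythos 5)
Your proof is correct and follows exactly the paper's route: the paper likewise deduces the corollary by combining Theorem~\ref{theorem:guarding_bounded_treewidth} with Lemma~\ref{lem:guarding_family_imply_small_NC} and the trivial bound $|\Pi_{r,G}[V(G) \to A']| \leq (r+2)^{|A'|}$. The bookkeeping with $f(r,m)=(r+2)^m$ and $p=t$ is exactly as intended.
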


\section{Graphs admitting a product structure}
\label{sec:product_structure}

The \emph{strong product} of two graphs $A$ and $B$ is the graph
$A \boxtimes B$ with vertex set $V(A) \times V(B)$  
where two distinct vertices $(a,b),(a',b')$ are adjacent if 
\[
(a=a' \text{ or } aa' \in E(A)) \text{ and } (b=b' \text{ or } bb' \in E(B)).
\] 
For example, the strong product of two paths is a grid with the diagonals in each square. 
Our interest in strong products of graphs comes from the fact that planar graphs have the following `product structure'. 

\begin{theorem}[Dujmović, Joret, Micek, Morin, Ueckerdt, and Wood~\cite{dujmovic_planar_2020}]\label{theorem:product_structure_planar}
    For every planar graph $G$, there is a graph $H$ of treewidth at most $3$ and a path $P$ such that $G$ is isomorphic to a subgraph of $H  \boxtimes P \boxtimes K_3$.
\end{theorem}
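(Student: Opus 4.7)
The plan is to obtain the product structure via a BFS layering of $G$ refined by a decomposition into ``vertical paths.'' First I would reduce to the case that $G$ is a plane triangulation, since adding edges can only make the product structure harder to obtain. Fix a root vertex $v_0$ on the outer face, take a BFS spanning tree $T$ of $G$ rooted at $v_0$, and let $L_0,L_1,\dots,L_d$ be the BFS layers. Let $P$ be a path with one vertex per nonempty layer, so that the $P$-coordinate of any $v\in V(G)$ is its BFS distance from $v_0$.

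Next I would partition $V(G)$ into \emph{vertical paths} in $T$: each part $V_x$ is a downward path in $T$, and the collection is chosen so that the vertices in a single layer $L_i$ that lie in the same $V_x$ form a contiguous arc in the cyclic order induced by the planar embedding. The plane triangulation structure is what enables this: the children of any vertex $v$ in $T$ form a contiguous arc of the rotation at $v$, so a ``leftmost-child'' rule produces a consistent decomposition. Let $H$ be the quotient graph with one vertex per vertical path, where $V_xV_y\in E(H)$ iff some edge of $G$ has endpoints in $V_x$ and $V_y$. Choose $\sigma:V(G)\to\{1,2,3\}$ in such a way that intra-layer edges of $G$ whose endpoints do \emph{not} correspond to adjacent vertices of $H$ always get distinct values of $\sigma$ on their endpoints. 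Then the map $v\mapsto (x,i,\sigma(v))$, where $v\in V_x\cap L_i$, embeds $G$ into $H\boxtimes P\boxtimes K_3$; the $K_3$-factor exists precisely to absorb those intra-layer edges that the combinatorics of $H$ alone does not cover.

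The main obstacle, and the deepest part of the argument, is to show that $\mathrm{tw}(H)\leq 3$. Here the planarity of $G$ is used essentially: after contracting the vertical paths, the images of the BFS layers form nested closed walks in a plane drawing of $H$, and one builds a tree decomposition of $H$ whose bags correspond to triangular faces of $G$ and contain at most $4$ vertical paths each (the three vertical paths meeting at the triangle, plus at most one ``parent'' path). The tree of the decomposition can be taken to be a spanning tree of the dual of $G$ restricted to inter-layer triangles; one then verifies the edge-coverage and subtree-connectivity axioms by an inductive argument on the BFS layers. This is the hard combinatorial-geometric step: a naive attempt with the same ingredients only yields treewidth $8$, and pushing the bound down to $3$ is exactly what requires both the careful leftmost-child choice of vertical paths and the triple grouping of vertices encoded by the $K_3$-factor. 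I expect that most of the effort of Dujmović--Joret--Micek--Morin--Ueckerdt--Wood is concentrated in this width analysis.
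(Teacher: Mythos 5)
The paper does not prove this statement; it is quoted verbatim from Dujmovi\'c, Joret, Micek, Morin, Ueckerdt, and Wood~\cite{dujmovic_planar_2020}, so there is no in-paper proof to compare against. Judged on its own merits, your sketch has the right opening moves (reduce to a plane triangulation, BFS layering giving the $P$-coordinate, a vertical-path decomposition of the BFS tree, quotient graph $H$), but it diverges from a workable argument at the two places that actually matter.

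First, the parts of the partition must not be single vertical paths but \emph{tripods}: unions of up to three vertical paths whose top ends form a triangle of the triangulation. With single vertical paths, the best one can prove for the quotient is treewidth $8$ (Pilipczuk--Siebertz), and no ``leftmost-child'' rule repairs this; the improvement to treewidth $3$ comes precisely from grouping three paths into one node of $H$, found by an inductive application of Sperner's lemma (the same tripod machinery that this paper adapts in Lemma~\ref{lemma:structural_lemma_best_bound_planar}). Second, your use of the $K_3$ factor is not sound. In $H \boxtimes P \boxtimes K_3$, two vertices with distinct $H$-coordinates $x \neq x'$ are adjacent only if $xx' \in E(H)$, \emph{regardless} of their $K_3$-coordinates; so the $K_3$ factor cannot ``absorb intra-layer edges whose endpoints do not correspond to adjacent vertices of $H$'' --- every edge of $G$ between distinct parts must already be an edge of $H$, full stop. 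The true role of $K_3$ is that a tripod meets each BFS layer in at most three vertices (one per constituent vertical path), so a node of $H$ times a node of $P$ must host up to three vertices of $G$. Finally, the treewidth-$3$ analysis you propose (bags indexed by triangular faces, a spanning tree of the dual) is not the actual argument and is not obviously completable; the known proof instead builds the tree decomposition along the Sperner recursion, each bag consisting of the at most four tripods bounding the current region. As written, the proposal would not compile into a correct proof without importing the tripod construction wholesale.
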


In this section, we lift the construction of guarding sets in graphs of bounded
treewidth to graphs having such a product structure.

Theorem~\ref{theorem:guarding_bounded_treewidth} can be extended to graphs with a
product structure as follows.

\begin{theorem}\label{theorem:guarding_product_general_form} 
Let $t, r$ be nonnegative integers.  
Let $H$ be a graph of treewidth at most $t$, let $c$ be a positive integer and let $P$ be a path. 
Let $G \subseteq H  \boxtimes P \boxtimes K_c$ and let $A \subseteq V(G)$. 
Then, $G$ has a $(r,4c(t+1)r)$-guarding set of size at most $2\binom{r+t}{t}|A|$.  
\end{theorem}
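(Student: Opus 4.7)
The plan is to mimic the proof of Theorem~\ref{theorem:guarding_bounded_treewidth}, using the product coordinates to control guard sizes, with a factor of~$2$ in the count to handle the extra $P$-direction. Fix a tree decomposition $(T,(X_z)_{z\in V(T)})$ of $H$ of width at most $t$, rooted at an arbitrary node $s$, and a linear ordering $<$ on $V(H)$ as in Theorem~\ref{theorem:wcol_bounded_tw}, so that $|\wreach_r[H,<,h]|\leq \binom{r+t}{t}$ for every $h\in V(H)$. Each vertex $v\in V(G)$ will be written as a triple $(h_v,p_v,k_v)\in V(H)\times V(P)\times V(K_c)$.

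The guarding family $\mathcal{S}$ will consist, for each triple $(a,w,\sigma)$ with $a\in A$, $w\in \wreach_r[H,<,h_a]$, and $\sigma\in\{+,-\}$, of the single set
\[
S_{a,w,\sigma} \;=\; X_{t_w} \;\times\; J_{a,\sigma} \;\times\; V(K_c),
\]
where $J_{a,+}$ and $J_{a,-}$ are two intervals of $4r$ consecutive vertices of $P$ around $p_a$ (concretely, $J_{a,+}=\{p_a-2r,\ldots,p_a+2r-1\}$ and $J_{a,-}=\{p_a-2r+1,\ldots,p_a+2r\}$), chosen so that their union contains $[p-r,p+r]$ for every $p$ with $|p-p_a|\leq r$. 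Each $S_{a,w,\sigma}$ has size at most $(t+1)\cdot 4r\cdot c=4c(t+1)r$, and the family has at most $2\sum_{a\in A}|\wreach_r[H,<,h_a]|\leq 2\binom{r+t}{t}|A|$ members.

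For each $v\in V(G)$ at distance at most $r$ from $A$, I will identify the right guard as follows. For every $(v,A)$-path $Q$ in $G$ of length at most $r$, let $w^\ast(Q)$ denote the $<$-minimum of the $H$-projection $\tilde Q$ of $Q$. As in the proof of Theorem~\ref{theorem:guarding_bounded_treewidth}, the compatibility of $<$ with the tree decomposition forces $t_{w^\ast(Q)}$ to be the root of the $T$-subtree $\{z\in V(T)\colon X_z\cap V(\tilde Q)\neq\emptyset\}$, and in particular $t_{w^\ast(Q)}\leq_T t_{h_v}$. Hence, as $Q$ ranges over all $(v,A)$-paths of length at most $r$, the nodes $t_{w^\ast(Q)}$ form a subset of the chain from $t_{h_v}$ to $s$ in $T$. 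Pick a pair $(a_v,Q_v)$ attaining the maximal (deepest) such $t_{w^\ast(Q)}$ under $\leq_T$, and set $w_v=w^\ast(Q_v)$. Then $w_v\in \wreach_r[H,<,h_{a_v}]$, and since $|p_v-p_{a_v}|\leq d_G(v,a_v)\leq r$, there exists $\sigma\in\{+,-\}$ with $[p_v-r,p_v+r]\subseteq J_{a_v,\sigma}$.

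It then remains to argue that $S_{a_v,w_v,\sigma}$ intersects every $(v,A)$-path $Q$ of length at most $r$. By maximality $t_{w^\ast(Q)}\leq_T t_{w_v}$, so the $T$-subtree spanned by $\tilde Q$, rooted at $t_{w^\ast(Q)}$ and containing $t_{h_v}$, also contains $t_{w_v}$; hence some vertex of $\tilde Q$ lies in $X_{t_{w_v}}$, and the corresponding vertex $q$ of $Q$ lies in $X_{t_{w_v}}\times V(P)\times V(K_c)$. Being at distance at most $r$ from $v$, the vertex $q$ has $P$-coordinate in $[p_v-r,p_v+r]\subseteq J_{a_v,\sigma}$, so $q\in S_{a_v,w_v,\sigma}$. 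The main obstacle will be precisely this final verification: a single guard must catch paths from $v$ to \emph{all} targets in $A$ simultaneously, and the maximal choice of $(a_v,w_v)$ is what makes this possible, exploiting the chain structure of the $T$-roots of the projected walk minima.
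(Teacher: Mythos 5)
Your proof is correct and follows essentially the same strategy as the paper's: a weak-reachability ordering from Theorem~\ref{theorem:wcol_bounded_tw} in the $H$-factor, windows of $4r$ consecutive rows in the $P$-factor (whence the guard size $4c(t+1)r$ and the factor $2$ in the count), and a cut argument based on the $T$-root of the minimum of the projected path. The only (immaterial) differences are that the paper tiles $P$ with fixed overlapping intervals $I_j$ rather than anchoring the two windows at each $a \in A$, and selects the guard as the first marked ancestor of $t_{h_v}$ in $T$ rather than via the deepest root $t_{w^\ast(Q)}$ over all short $(v,A)$-paths $Q$.
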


\begin{proof}
Let $(T,(X_z)_{z \in V(T)})$ be a tree decomposition of $H$ of width at most $t$
rooted at an arbitrary node $s\in V(T)$. 

Each vertex of $P$ defines a corresponding {\em row} of the product $H \boxtimes P$ in a natural way. 
Ordering the vertices of $P$ from one of its ends to the other, consider the corresponding rows, and let $L_0, L_1, \dots, L_p$ denote their intersections with $V(G)$, which we call a {\em layering} of $G$.  
Observe that, for every edge $uv\in E(G)$, the two vertices $u,v$ are either in the same layer or in consecutive layers. 
Observe also that every ball of radius $r$ in $G$ is included in at least one of the intervals 
\[
I_j = \bigcup_{k=0}^{4r-1}L_{2rj+k}
\]
for $j\geq 0$. (We use the convention that $L_i=\emptyset$ for $i>p$.)

Fix one of these intervals $I_j$, and let $A_j = A \cap I_j$.
For every $a \in A_j$, we denote by $u_a$ the (unique) vertex in $H$ such that 
$a \in V(H[\{u_a\}] \boxtimes P \boxtimes K_c )$.
Let $A'_j = \bigcup_{a \in A_j} \wreach_r[H, <, u_a] \subseteq V(H)$ where $<$ is as
in Theorem~\ref{theorem:wcol_bounded_tw}.
By Theorem~\ref{theorem:wcol_bounded_tw}, $|A'_j| \leq \binom{r+t}{t}|A_j|$.

We set $\mathcal{S}_j = \{V(H[X_{t_u}] \boxtimes P \boxtimes K_c ) \cap I_j\mid u \in A'_j\}$,
and finally $\mathcal{S} = \bigcup_{j \geq 0} \mathcal{S}_j$.

First, we have $|S| \leq (t+1)\cdot c \cdot 4r$ 
for every $S \in \mathcal{S}$,  by construction.
Second, $|\mathcal{S}| \leq \sum_{j\geq 0} |\mathcal{S}_j| \leq \sum_{j \geq 0} \binom{r+t}{t}|A_j|
\leq 2\binom{r+t}{t}|A|$ (since each $a \in A$ is included in $A_j$ for at most two distinct indices $j$).

It remains to show property~\ref{item:def_guarding_cut_condition}.
Let $v \in V(G)$ be a vertex such that $\dist_G(u,A) \leq r$.
There exists $j$ such that $N^r[v] \subseteq I_j$,
and so $A \cap N^r[v] \subseteq A_j$.
Let $z \in V(T)$ be such that $v \in X_z$.
Consider the (unique) $(z, s)$-path $P_{z, s}$ in $T$, and let
$y$ be the first node in $P_{z, s}$ that also belongs to 
$\{t_u \mid u \in A'_j\}$.
Note that the node $y$ is well defined because there is at least one $(v,A)$-path
$Q$ of length at most $r$ in $G$, and the path $Q$ induces a connected subgraph $\Tilde{Q}$ of $H$,
which has a minimum $w$ with respect to $<$. 
Moreover, $w \in A'_j$ (as follows from the definition of $A'_j$ and the existence of $\Tilde{Q}$), 
and $t_w \in V(P_{z, s})$ (because of our specific ordering $<$). 

We take $S = V(H[X_y] \boxtimes P \boxtimes K_c ) \cap I_j$, which is in  $\mathcal{S}_j$.
We claim that $S$ intersects every $(v,A)$-path of length at most $r$ in $G$. 
The argument is similar to the one showing the existence of $y$ above: 
For every $a \in A$ and every $(v,a)$-path $Q$ of length at most $r$,
we have $V(Q) \subseteq N^r[v] \subseteq I_j$.
Moreover, $Q$ induces in $H$ a connected subgraph $\Tilde{Q}$, which has a minimum $w$ with respect to $<$.  
Then $t_{w}$ belongs to the unique $(z,s)$-path in $T$,
and it follows that $X_y$ intersects $V(\Tilde{Q})$ by our choice of $y$, and so $S$ intersects $Q$.
This proves property~\ref{item:def_guarding_cut_condition}, and concludes the
proof of the theorem.
\end{proof}

\section{Planar graphs}\label{section:planar}

Since planar graphs are $K_5$-minor-free, Theorem~\ref{thm:NC_Kt_Minor_free} directly implies that planar graphs have profile complexity (and so neighborhood complexity) in $\bigO(r^{24})$. 
When restricting the proof of Theorem~\ref{thm:NC_Kt_Minor_free} to planar graphs, one may use a better bound of $\bigO(r^3)$ for their weak coloring numbers (as proved in~\cite{heuvel_generalised}), which results in a better upper bound of $\bigO(r^{16})$ for the profile complexity of planar graphs. 
In this section we improve on these bounds.

We give three proofs.
The first one is very short, it combines the product structure of planar graphs~\cite{dujmovic_planar_2020} mentioned in the previous section together with Theorem~\ref{theorem:guarding_product_general_form} and gives a $\bigO(r^{11})$ bound on the profile complexity. 
The second one starts by applying a lemma of Soko\l{}owski~\cite{sokolowski_bounds_2021} to reduce to the case where the set $A$ under consideration is the vertex set of the outerface, and also relies on the existence of `sparse covers' for planar graphs (see below). 
It results in a $\bigO(r^6)$ bound. 
The third proof is a variant of the second one, giving a $\bigO(r^4)$ bound. 
It uses two extra ingredients, a recent theorem of Li and Parter~\cite{li_parter_stoc_2019} (Theorem~\ref{theorem:base_bound_Li_Parter} below)  and some form of decomposition of planar graphs using `tripods' that is directly inspired from the proof of the product structure of planar graphs~\cite{dujmovic_planar_2020}. 
We note that this third proof resulted from discussions with Jacob Holme, Erik Jan van Leeuwen, and Marcin Pilipczuk, who upon reading an earlier version of this paper pointed out reference~\cite{li_parter_stoc_2019} to us, and the fact that it could be used to improve the $\bigO(r^6)$ bound to $\bigO(r^4)$. 
We are grateful to them for their helpful feedback. 

\subsection{First proof}

We start with the $\bigO(r^{11})$ bound on the profile complexity of planar graphs. 

\begin{theorem}\label{theorem:NC_planar_degree_16}
If $G$ is a planar graph, $r$ is a nonnegative integer, and $A$ is a nonempty set of vertices of $G$, 
then
\[
|\Pi_{r,G}[V(G) \to A]| \leq 10^8 (r+1)^{11}|A|.
\]
\end{theorem}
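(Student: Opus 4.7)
The plan is to combine three ingredients already proved in the paper. First, the product structure theorem (Theorem~\ref{theorem:product_structure_planar}) lets us embed $G$ as a subgraph of $H \boxtimes P \boxtimes K_3$ for some graph $H$ of treewidth at most $3$ and some path $P$. Applying Theorem~\ref{theorem:guarding_product_general_form} with $t=3$ and $c=3$ then produces an $(r, 48r)$-guarding set $\mathcal{S}$ for $A$ in $G$ of size at most $2\binom{r+3}{3}|A|$. Finally, since planar graphs are $K_5$-minor-free, Corollary~\ref{corollary:bound_polynomial_in_A_profiles} yields $|\Pi_{r,G}[V(G) \to S]| \leq (r+1)^4 |S|^4 \leq (r+1)^4 (48r)^4$ for every $S \in \mathcal{S}$.

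It then remains only to feed these two bounds into Lemma~\ref{lem:guarding_family_imply_small_NC}, which asserts that each vertex's profile on $A$ is determined by its profile on some $S \in \mathcal{S}$. This directly gives
\[
|\Pi_{r,G}[V(G) \to A]| \;\leq\; |\mathcal{S}|\cdot \max_{S \in \mathcal{S}} |\Pi_{r,G}[V(G) \to S]| \;\leq\; 2\binom{r+3}{3}|A|\cdot (r+1)^4 (48r)^4.
\]
The residual work is purely numerical: using $\binom{r+3}{3} = \bigO((r+1)^3)$ and collecting constants yields the announced bound $10^8 (r+1)^{11}|A|$. The exponent $11 = 3 + 4 + 4$ decomposes naturally as the size of $\mathcal{S}$ (contributing $r^3$), the $K_5$-minor-free VC-based bound from Corollary~\ref{corollary:bound_polynomial_in_A_profiles} (contributing $(r+1)^4$), and the cap $|S|\leq 48r$ on each guard (contributing $r^4$).

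Since every ingredient is off-the-shelf in this paper, the argument presents no real obstacle; the only decision to make is to pick the parameters $t=3$, $c=3$ in Theorem~\ref{theorem:guarding_product_general_form}, which are dictated by the planar product structure theorem. What this first proof does \emph{not} exploit is any finer geometry of the outerface or of separators in planar graphs, which explains why the exponent $11$ is still far from the $\bigO(r^4)$ eventually achieved in Theorem~\ref{theorem:NC_planar_degree_4}, and why this short derivation serves naturally as a warm-up.
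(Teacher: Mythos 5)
Your proposal is correct and is essentially identical to the paper's proof: both combine the product structure theorem with Theorem~\ref{theorem:guarding_product_general_form} to get an $(r,48r)$-guarding set of size at most $2\binom{r+3}{3}|A|$, then apply Lemma~\ref{lem:guarding_family_imply_small_NC} together with the VC-dimension bound of Corollary~\ref{corollary:bound_polynomial_in_A_profiles}. The parameter choices, the exponent decomposition $11=3+4+4$, and the final constant all match.
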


\begin{proof}
By Theorems~\ref{theorem:product_structure_planar} and~\ref{theorem:guarding_product_general_form}, $G$ has an $(r,48r)$-guarding
set $\mathcal{S} \subseteq \mathcal{P}(V(G))$ of size at most $2\binom{r+3}{3}|A|$.
It follows from Lemma~\ref{lem:guarding_family_imply_small_NC} and Corollary~\ref{corollary:bound_polynomial_in_A_profiles} that
\[
|\Pi_{r,G}[V(G) \to A]| \leq (r+1)^4(48r)^4|\mathcal{S}| \leq (48)^4 (r+1)^8 2\binom{r+3}{3}|A| \leq 2(48)^4 (r+1)^{11} |A|
\]
and so $|\Pi_{r,G}[V(G) \to A]| \leq 10^8 (r+1)^{11}|A|$.
\end{proof}

\subsection{Second proof}

We begin this second proof with a lemma  similar to Lemma~\ref{lem:guarding_family_imply_small_NC}.

\begin{lemma}\label{lemma:covers_and_cuts}
Let $G$ be a graph, let $A$ be a nonempty set of vertices of $G$, and let $r$ be a nonnegative integer. 
Suppose that $V_0,V_1, \dots, V_q$ is a collection of subsets of $V(G)$ with $\bigcup_{i=0}^q V_i = V(G)$
and $S_0, \dots, S_q$ is a collection of set of vertices
such that for every
$i\in [0, q]$ $S_i \subseteq V_i$, and for every vertex $v \in V(G)$, there exists $i \in [0,q]$ so that $v \in V_i$ and for every $a \in A \cap N^r_G[v]$, there is a shortest $(v,a)$-path $P$ in $G$ whose maximal prefix in $V_i$ intersects $S_i$
(informally, $P$ must intersect $S_i$ before leaving $V_i$).
Then
\[
|\Pi_{r,G}[V(G) \to A]| \leq
\sum_{i=0}^q |\Pi_{r,G[V_i]}[V_i \to S_i]|.
\]
\end{lemma}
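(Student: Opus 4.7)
The plan is to adapt the proof of Lemma~\ref{lem:guarding_family_imply_small_NC}, with the twist that each `cut' $S_i$ only controls distances inside the induced subgraph $G[V_i]$ rather than in all of $G$. First, I would fix for every vertex $v \in V(G)$ an index $i(v) \in \{0,\dots,q\}$ witnessing the hypothesis, and set $U_i = \{v \in V(G) \mid i(v)=i\}$, so that $U_0,\dots,U_q$ partition $V(G)$ with $U_i \subseteq V_i$. The main step is to prove, for each $i$, the inequality
\[
|\Pi_{r,G}[U_i \to A]| \leq |\Pi_{r,G[V_i]}[V_i \to S_i]|,
\]
after which summing over $i$ and using $\Pi_{r,G}[V(G) \to A] \subseteq \bigcup_i \Pi_{r,G}[U_i \to A]$ yields the statement.

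For the key inequality, fix $v \in U_i$ and any $a \in A$ with $\dist_G(v,a) \leq r$. By hypothesis there is a shortest $(v,a)$-path $P$ in $G$ whose maximal prefix inside $V_i$ meets $S_i$; let $s$ be the first vertex of $S_i$ along $P$. Then the subpath of $P$ from $v$ to $s$ lies entirely in $V_i$ and, being a prefix of a shortest path in $G$, has length $\dist_G(v,s)$, whence $\dist_{G[V_i]}(v,s) = \dist_G(v,s)$. Therefore
\[
\dist_G(v,a) = \dist_G(v,s) + \dist_G(s,a) = \dist_{G[V_i]}(v,s) + \dist_G(s,a),
\]
while for every $s' \in S_i$, $\dist_{G[V_i]}(v,s') + \dist_G(s',a) \geq \dist_G(v,s') + \dist_G(s',a) \geq \dist_G(v,a)$. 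Hence
\[
\dist_G(v,a) = \min_{s \in S_i} \bigl( \dist_{G[V_i]}(v,s) + \dist_G(s,a) \bigr),
\]
and capping at $r$ shows that $\pi_{r,G}[v \to a]$ is entirely determined by $\pi_{r,G[V_i]}[v \to S_i]$ together with the values $\dist_G(s,a)$, none of which depend on $v$.

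It remains to handle the fact that the all-$+\infty$ profile is excluded from both $\Pi_{r,G}[\cdot]$ sets. If $v \in U_i$ has the all-$+\infty$ profile on $S_i$ in $G[V_i]$, then the displayed identity (in its contrapositive form) forces $\dist_G(v,a) > r$ for every $a \in A$, so $v$ also has the all-$+\infty$ profile on $A$ in $G$. Consequently every profile in $\Pi_{r,G}[U_i \to A]$ arises from some profile in $\Pi_{r,G[V_i]}[U_i \to S_i] \subseteq \Pi_{r,G[V_i]}[V_i \to S_i]$, which yields the per-$i$ bound. The argument is largely bookkeeping once the key identity is established; the only care to take is to verify that the $+\infty$ convention and the capping at $r$ cooperate correctly with the $\min$, which I do not expect to be a serious obstacle.
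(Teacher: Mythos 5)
Your proposal is correct and follows essentially the same route as the paper's proof: both rest on the identity $\pi_{r,G}[v \to a] = \mathrm{Cap}_r \circ \min_{s \in S_i}\bigl(\pi_{r,G[V_i]}[v \to s] + \dist_G(s,a)\bigr)$ for the witnessing index $i$, which shows the profile of $v$ on $A$ is determined by its profile on $S_i$ inside $G[V_i]$. The paper states this identity without justification, whereas you supply the (correct) verification via the first $S_i$-vertex on the shortest path and the handling of the all-$+\infty$ profile.
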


\begin{proof}
Let $v \in V(G)$. Let $i \in [0,q]$ be as in the statement,
then for every $a \in A$
\[
\begin{split}
\pi_{r,G}[v \to a] 
&= \mathrm{Cap}_r \circ \min_{s \in S_i}(\dist_{G[V_i]}(v,s) + \dist_G(s,a)) \\
&= \mathrm{Cap}_r \circ \min_{s \in S_i}(\pi_{r,G[V_i]}[v \to s] + \dist_G(s,a)).\\
\end{split}
\]
It follows that $|\Pi_{r,G}[V(G) \to A]| \leq
\sum_{i= 0}^q |\Pi_{r,G[V_i]}[V_i \to S_i]|$.
\end{proof}

The following lemma due to Soko\l{}owski~\cite{sokolowski_bounds_2021} is a reduction for planar graphs to the case where the vertices in $A$ are exactly the vertices of the outerface, at the cost of a small increase in the size of $A$. 
It is a key ingredient of our second and third proofs. 

\begin{lemma}[Sokołowski~\cite{sokolowski_bounds_2021}]\label{lemma:reduction_outerface}
Let $G$ be a connected planar graph, let $A$ be a nonempty set of vertices of $G$, and let $r$ be a nonnegative integer. 
If every vertex of $G$ is at distance at most $r$ from $A$, then one can construct a plane graph $G'$ and a set $A' \subseteq V(G')$ such that
\begin{enumerate}[label=(\roman*)]
\item $|A'| \leq 2(2r+1)|A|$;
\item the outerface of $G'$ is bounded by a cycle;
\item $A'$ is the vertex set of the outerface of $G'$, and
\item $|\Pi_{r,G}[V(G) \to A]| \leq |\Pi_{r,G'}[V(G') \to A']|$.
\end{enumerate}
\end{lemma}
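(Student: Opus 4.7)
The plan is to construct $G'$ by ``opening up'' a plane embedding of $G$ via cuts along short shortest paths to $A$, so that all $A$-vertices end up on the outer boundary. After preliminary steps (adding dummy edges to assume $G$ is $2$-connected, which is harmless since the new vertices can be ignored in the profile), we fix a plane embedding. I would then iteratively process inner faces: while there is more than one bounded face, pick an inner face $f$ and a shortest path $P_f$ from some vertex of its boundary to $A$; since every vertex is at distance at most $r$ from $A$, we have $\ell(P_f) \leq r$. ``Cutting'' along $P_f$ duplicates the vertices and edges of $P_f$, merges $f$ with a neighboring face, and reduces the number of inner faces by one. Once only one bounded face remains, the resulting plane graph $G'$ has an outer face bounded by a closed walk, which after mild smoothing can be made a cycle. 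We set $A'$ to be the vertex set of this outer face.

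For property (iv), I would define a projection $\pi\colon V(G') \to V(G)$ sending each copy back to its original, and show that $\dist_{G'}(u,v) \geq \dist_G(\pi(u),\pi(v))$, since cutting never decreases distances. Conversely, for each $v \in V(G)$ fix a canonical copy $\hat v \in V(G')$; then any shortest $(v,a)$-path in $G$ lifts to a walk in $G'$ from $\hat v$ to \emph{some} copy of $a$, giving
\[
\dist_G(v,a) \;=\; \min_{a' \in \pi^{-1}(a)} \dist_{G'}(\hat v, a').
\]
Since each copy of $a$ lies in $A'$, the profile $\pi_{r,G}[v \to A]$ is fully determined by $\pi_{r,G'}[\hat v \to A']$, and the map $v \mapsto \hat v$ then yields the desired injection from profiles in $G$ to profiles in $G'$, giving $|\Pi_{r,G}[V(G) \to A]| \leq |\Pi_{r,G'}[V(G') \to A']|$.

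For the size bound $|A'| \leq 2(2r+1)|A|$, I would order the cuts by a spanning tree of an auxiliary dual-like graph whose ``faces'' are clustered by nearest $A$-vertex. Each cut contributes at most $2(r+1)$ vertices to the boundary of the outer face (two sides of the duplicated path, each of length at most $r$), and each $a \in A$ serves as endpoint of at most a constant (in fact, at most $2$) of the selected paths in the final spanning structure. Combining these two bounds gives the outer-face length inequality $|A'| \leq 2(2r+1)|A|$.

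The main obstacle is the size bound: the naive version of this argument bounds the number of cuts by the number of inner faces, which can be $\Omega(|V(G)|)$ rather than $O(|A|)$. The key is to globally organize the cuts so that each $A$-vertex is used as an endpoint only a bounded number of times. This requires carefully exploiting Voronoi-like regions around $A$ with respect to the shortest-path metric, together with a planarity/Euler-formula argument on how these regions meet the faces of $G$. Once the combinatorics of cut-endpoints is under control, assembling the full statement from the distance-preservation analysis is routine.
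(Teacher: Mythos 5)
There is a genuine gap, and it is the one you yourself flag: the size bound. Your construction merges inner faces one at a time until a single bounded face remains, but this requires a number of cuts equal to the number of inner faces minus one, which is $\Theta(|V(G)|)$ for, say, a triangulation; no amount of clustering of cut endpoints into Voronoi-like regions can reduce the total boundary created below the number of cuts performed, so $|A'|$ cannot be bounded by $\bigO(r|A|)$ along this route. Moreover, merging all inner faces is not what the lemma asks for: the inner faces of $G'$ are unconstrained, and what you actually need is to bring (copies of) the vertices of $A$ onto a single short outer face. The paper does this in one shot: since $G$ is connected and every vertex is within distance $r$ of $A$, a minimal tree $T$ of $G$ containing $A$ satisfies $|E(T)| \leq (2r+1)(|A|-1)$; one then cuts the plane open along $T$ so that the new face is bounded by an Euler tour of $T$ compatible with the embedding (each $u \in V(T)$ contributing $d_T(u)$ copies), giving $|A'| = 2|E(T)| \leq 2(2r+1)|A|$ immediately. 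This single global cut along a small Steiner tree is the missing idea.

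A second, more local error is your distance formula $\dist_G(v,a) = \min_{a' \in \pi^{-1}(a)} \dist_{G'}(\hat v, a')$. Cutting strictly increases some distances: a shortest $(v,a)$-path in $G$ that crosses the cut transversally at an intermediate vertex $w$ does not lift to a walk in $G'$, because its two edges at $w$ attach to different copies of $w$. So the right-hand side can exceed $\dist_G(v,a)$, and the claimed equality fails. The correct recovery is $\dist_G(v,a) = \min_{w' \in A'} \bigl(\dist_{G'}(v,w') + \dist_G(\pi(w'),a)\bigr)$, obtained by lifting only the prefix of a shortest path up to its first vertex on the cut tree; this is precisely why the profile on the \emph{entire} outer face $A'$, and not merely on the copies of $a$, is what determines the profile on $A$. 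Your conclusion for item (iv) is the right one, but as justified it does not follow.
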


Since this lemma is not explicitly stated in~\cite{sokolowski_bounds_2021}, but rather used implicitly in a proof (see proof of Theorem 26 in~\cite{sokolowski_bounds_2021}), we sketch here its elegant proof.
\begin{proof}[Sketch of proof]
Since $G$ is connected and every vertex in $G$ is at distance at most $r$ from a vertex in $A$, we deduce that there is a tree $T$ in $G$ such that $A \subseteq V(T)$ and $|E(T)| \leq (2r+1)(|A|-1)$. Such a tree can be obtained by considering a tree $T$ with $A \subseteq V(T)$ of minimal size.

The graph $G'$ is now obtained from $G$ by ``cutting  the plane open'' along $T$:
the tree $T$ is replaced by a face $A'$ whose vertices
are the occurrences of the vertices in $T$ along an Euler tour of $T$ compatible with the embedding. See Figure~\ref{fig:cut_along}.
Since every vertex $u$ in $T$ corresponds to $d_{T}(u)$ vertices in $A'$, we have $|A'| = \sum_{u \in V(T)} d_T(u) = 2|E(T)| \leq 2(2r+1)|A|$.
Moreover, one can check that for every $x,y \in V(G) \setminus A$, if $x$ and $y$ have the same profile at distance $r$ on $A'$ in $G'$, then $x$ and $y$ have the same profile
at distance $r$ on $A$ in $G$. We conclude that
\[
\begin{split}
|\Pi_{r,G}[V(G) \to A] 
&\leq |A| + |\Pi_{r,G}[V(G) \setminus A \to A]| \\
&\leq |A| + |\Pi_{r,G'}[V(G') \setminus A' \to A']| \\
& \leq |A'| + |\Pi_{r,G'}[V(G') \setminus A' \to A']| \\
&\leq |\Pi_{r,G'}[V(G') \to A']|. \\
\end{split}
\]
\end{proof}

\begin{figure}[ht]
    \centering
    \includegraphics{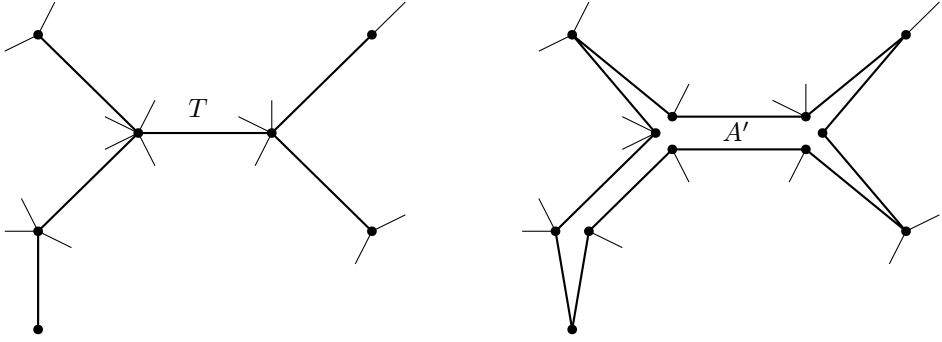}
    \caption{The construction of $G'$ and $A'$ in the proof of Lemma~\ref{lemma:reduction_outerface}}
    \label{fig:cut_along}
\end{figure}

We will also use the following objects called `sparse covers', 
which will allow us to focus on graphs with radius $\bigO(r)$ when bounding the profile complexity at distance $r$. 
Let $G$ be a graph. 
Given integers $r,d,k \geq 0$, a family $\mathcal{X} \subseteq \mathcal{P}(V(G))$ is an
{\em $(r,d,k)$-sparse cover of $G$} if
\begin{enumerate}[label=(\roman*)]
\item for every vertex $v \in V(G)$, there exists $X \in \mathcal{X}$ such that $N^r_G[v] \subseteq X$,
\item for every $X \in \mathcal{X}$, the graph $G[X]$ has radius at most $d$, and
\item for every vertex $v \in V(G)$, we have $|\{X \in \mathcal{X}\mid v \in X\}| \leq k$.
\end{enumerate}

The following two theorems provide sparse covers for planar graphs and graphs excluding a minor.

\begin{theorem}[Busch, LaFortune, and Tirthapura~\cite{busch_improved_2007}]\label{theorem:construct_X_planar}
For every planar graph $G$ and nonnegative integer $r$, $G$ has an $(r,24r,18)$-sparse cover.
\end{theorem}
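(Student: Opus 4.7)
The plan is to construct the sparse cover by a two-level BFS-based decomposition of $G$, using planarity to bound the overlap at each level.

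First, I would pick an arbitrary root $v_0 \in V(G)$ and perform BFS in $G$, obtaining layers $L_0, L_1, \ldots$. Group the layers into "primary slabs" of $6r$ consecutive layers, using three shifts $\{0, 2r, 4r\}$. Since any ball $N^r_G[v]$ spans only $2r+1$ consecutive layers, at least one of the three shifts places every such ball entirely inside a single slab, and each vertex of $G$ belongs to exactly three primary slabs. This first step therefore contributes a factor of $3$ to the overlap and reduces the problem to constructing, within each primary slab $S$, a cover of $S$ by clusters of $G$-radius at most $24r$ with bounded overlap. Each primary slab, viewed as an induced subgraph, is still planar and has "vertical extent" at most $6r$ with respect to the BFS from $v_0$.

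Second, within each primary slab $S$, I would apply a further decomposition whose role is to break $S$ into pieces of globally bounded $G$-diameter. The naive idea of doing another BFS inside $S$ fails, because the "horizontal" diameter of a slab can be arbitrarily large. Instead, I would use a planar-separator argument in the spirit of Lipton--Tarjan: find a constant number of shortest-path separators (shortest paths in $G$ with endpoints on the boundary layers of $S$) whose removal partitions $S$ into subgraphs of $G$-diameter $O(r)$. Padding each subgraph by its $r$-neighborhood in $G$ then yields clusters of $G$-radius at most $24r$ that contain every $r$-ball centered in the subgraph. The overlap of this second decomposition can be shown to be at most $6$ by a planarity argument controlling how the padded separator paths can intersect a given vertex; combined with the overlap $3$ from the first step, the total is at most $18$.

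The main obstacle is clearly in the second step: one must show that, within each primary slab, the planarity-based separator decomposition together with the $r$-padding gives clusters of bounded $G$-radius while keeping the overlap at a small constant. This is the technical heart of the Busch--LaFortune--Tirthapura argument: the shortest-path separators must be chosen carefully (typically as $O(1)$ concatenated paths of the BFS tree restricted to $S$), and planarity is used both to bound the number of separator paths needed at each recursive step and to rule out configurations in which an $r$-padded neighborhood of a vertex could meet too many separator paths. Once this local statement inside a slab is established, the overall bound $(r,24r,18)$ follows by multiplying the overlaps of the two decomposition levels.
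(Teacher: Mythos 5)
First, a point of reference: the paper does not prove this theorem at all --- it is imported as a black box from Busch, LaFortune and Tirthapura~\cite{busch_improved_2007} --- so there is no internal proof to compare your argument against, and I can only assess the proposal on its own terms. Your first level (BFS layering from a root, slabs of $6r$ consecutive layers with three shifts, overlap $3$, every $r$-ball captured intact by at least one shift) is standard and correct.

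The genuine gap is the second level, and it is not a detail but the entire content of the theorem. A slab $S$ can have unbounded ``horizontal'' extent (take a long cycle attached to the root by a path of length $d$: the slab around layer $d$ is a path of unbounded length), so no constant number of separator paths can split $S$ into pieces of $G$-diameter $\bigO(r)$; one needs $\Omega(|S|/r)$ pieces and hence that many cuts. If you instead mean a recursion with $\bigO(1)$ separators per level, the recursion depth is unbounded and the accumulation of padded separator neighborhoods is precisely what threatens the overlap --- yet the bound of $6$ on the second-level overlap is asserted, not proved, and looks reverse-engineered from $18/3$. Two further points are left dangling: padding by $r$-neighborhoods \emph{in $G$} lets a cluster escape its slab, so the total overlap is not simply the product of the per-level overlaps without an extra argument; and the radius bound $24r$ needs the unpadded pieces to have radius roughly $23r$, which is exactly the unestablished diameter claim. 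For the record, the actual construction does not use a Lipton--Tarjan recursion inside a slab: one contracts everything below the slab to an apex, notes that in the contracted (still planar) graph every slab vertex is within $\bigO(r)$ of that apex, extracts from this a shortest path dominating the slab at distance $\bigO(r)$, and performs a \emph{second shifted interval decomposition along that path}; the constant $18$ arises as a product of shift multiplicities of the two interval decompositions. As written, your proposal is a correct outer shell around an unproved --- and, in the form stated, unworkable --- core.
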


\begin{theorem}[Abraham, Gavoille, Malkhi, and Wieder~\cite{abraham_strong-diameter_2007}]\label{theorem:construct_X_minor_closed}
For every $K_t$-minor-free graph $G$ and nonnegative integer $r$,
$G$ has an $(r,\bigO(t^2r),2^{\bigO(t)}t!)$-sparse cover.
\end{theorem}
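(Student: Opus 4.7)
The plan is to construct the sparse cover inductively on $t$, following the Klein--Plotkin--Rao decomposition paradigm. The base $t = 2$ is trivial because $K_2$-minor-free graphs are edgeless. For the inductive step, the core tool is a \emph{ball-growing} procedure: pick a center $v$ in each connected component, grow BFS balls $B_0 \subset B_1 \subset \cdots$ of radii $i\Delta$ for $\Delta = \Theta(r)$, and show that among the first $\bigO(t)$ annuli at least one, say $B_{i+1} \setminus B_i$, is ``thin'' enough to serve as a separator. Include $B_{i+1}$ as a cluster of the cover (its radius from $v$ is $\bigO(tr)$) and recurse on each connected component of $V(G) \setminus B_{i+1}$. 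To cover every $r$-ball, run the whole procedure $\bigO(t)$ times with starting offsets shifted by $r, 2r, \ldots, \bigO(tr)$, so that any fixed $r$-ball falls strictly inside some chosen annulus in at least one shift, ensuring property~(i) of sparse covers.

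To close the outer induction, I would use the structural input from the KPR argument: after $\bigO(t)$ consecutive ball-growing peels, a surviving component of diameter $\omega(tr)$ must be $K_{t-1}$-minor-free, for otherwise the $t-1$ nested shortest-path spines across the peeled annuli, together with a hypothetical $K_{t-1}$-minor inside the component, would assemble into a $K_t$-minor in the original graph. On each such surviving component the inductive hypothesis delivers an $(r,\, \bigO((t-1)^2 r),\, 2^{\bigO(t-1)}(t-1)!)$-cover. Merging these with the $\bigO(t)$-many clusters peeled at the current level yields the final cover. Parameters compose cleanly: cluster radii add across the $\bigO(t)$ levels of recursion, each contributing $\bigO(tr)$, giving $\bigO(t^2 r)$ in total; overlap multiplies, being $\bigO(t)$ per level (the shifts are arranged to produce \emph{disjoint} separators at each level), yielding $2^{\bigO(t)} t!$ overall.

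The main obstacle is the ball-growing lemma itself, which is where the minor-closed hypothesis enters nontrivially: one must show that among any $\bigO(t)$ successive annuli around $v$ at least one is a good separator. The natural approach is an exchange argument proving that if $t$ consecutive annuli were all ``heavy'', then suitable connected subsets extracted from them would form $t$ pairwise-adjacent connected branch sets in $G$, producing a $K_t$-minor and contradicting the hypothesis. Making these branch sets simultaneously connected, vertex-disjoint, and pairwise-adjacent --- under the metric geometry imposed by the particular BFS peeling and consistently across the multiple offset shifts --- is the delicate part of the proof, and requires carefully choosing shortest-path spines that remain compatible when restricted to each annulus. A secondary subtlety is that a naïve counting of the overlap gives $t^{\bigO(t)}$; extracting the tighter $2^{\bigO(t)} t!$ bound requires that, within one level, the $\bigO(t)$ shifts contribute \emph{additively} to overlap (since their separators are disjoint), and only across the $t$ recursive levels does the overlap compound multiplicatively.
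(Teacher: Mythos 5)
The paper does not prove this statement; it is imported verbatim as a black box from Abraham, Gavoille, Malkhi, and Wieder, so there is no internal proof to compare against. Judged on its own, your plan follows the right paradigm (KPR-style iterated ball-growing with a depth-$t$ recursion and a $K_t$-minor extracted from $t$ nested ``heavy'' rounds), but it has two concrete gaps. First, the claim that a surviving component of large diameter ``must be $K_{t-1}$-minor-free'' is not the correct inductive invariant and is false as stated: deleting an annulus around one BFS center says nothing about minors living elsewhere in a surviving component (a $K_{t-1}$-minor-free conclusion would require every $K_{t-1}$ minor in the component to be attachable to the peeled spines, which fails for minors far from the peeled region). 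The correct statement is relative: either the induction is on the graph with the previously peeled cores \emph{contracted}, or one argues that $t$ consecutive heavy rounds, each contributing a branch set threaded through its own annulus and connected downward by a shortest path to its BFS root, assemble into a $K_t$ minor; in either form the invariant travels with the recursion and is not a property of an isolated component. Second, the recursion structure as you describe it does not deliver the covering property together with strong radius: if you recurse on connected components of $V(G)\setminus B_{i+1}$, an $r$-ball of $G$ centered in such a component may route through $B_{i+1}$, and your shifting argument only places that ball inside some annulus $B_{i+1}\setminus B_i$ --- so the clusters that actually cover $r$-balls must be built \emph{inside the slabs} (each slab, with the inner ball contracted to a dominating vertex, being the object to which the inductive hypothesis applies), not inside the complements. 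Relatedly, the recursion on complements has no reason to terminate in $t$ levels.

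The parameter bookkeeping also needs the recursion pinned down before it ``composes cleanly'': the radius bound $\bigO(t^2 r)$ comes from $t$ levels each adding $\bigO(tr)$ only if each level's clusters are balls around centers reachable within the union of the slabs peeled so far, and the overlap bound $2^{\bigO(t)}t!$ comes from a recurrence of the form $f(t)=\bigO(t)\cdot f(t-1)$ driven by the $\bigO(t)$ shifts per level, which presupposes the slab-based recursion above. In short, the skeleton is the right one, but the two load-bearing steps --- the relative minor-freeness invariant and the slab-contraction recursion that secures strong radius --- are exactly the parts currently missing or misstated.
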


We are now ready to prove a $\bigO(r^6)$ bound on the profile complexity of
planar graphs.

\begin{theorem}\label{theorem:NC_planar_degree_6}
Let $G$ be a planar graph, let $A$ be a nonempty set of vertices of $G$, and let $r$ be a nonnegative integer. Then, 
\[
|\Pi_{r,G}[V(G) \to A]| \leq 10^8 (r+1)^6 |A|.
\]
\end{theorem}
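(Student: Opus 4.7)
The plan combines three ingredients: Sokolowski's reduction (Lemma~\ref{lemma:reduction_outerface}), the sparse cover theorem for planar graphs (Theorem~\ref{theorem:construct_X_planar}), and the cubic outerface profile bound (Corollary~\ref{corollary:bound_profiles_outer_face}), glued together via the cut decomposition of Lemma~\ref{lemma:covers_and_cuts}.

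First I would assume, without loss of generality, that every vertex of $G$ lies within distance $r$ of $A$, since vertices farther from $A$ carry the all-$+\infty$ profile that is excluded from $\Pi_{r,G}[V(G) \to A]$ and their deletion preserves all relevant distances. Lemma~\ref{lemma:reduction_outerface} then yields a plane graph $G'$ with a set $A' \subseteq V(G')$ on its outerface, $|A'| \leq 2(2r+1)|A|$, and $|\Pi_{r,G}[V(G) \to A]| \leq |\Pi_{r,G'}[V(G') \to A']|$.

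Next I would apply Theorem~\ref{theorem:construct_X_planar} to $G'$ to obtain an $(r,24r,18)$-sparse cover $\{X_1,\dots,X_q\}$, and set $V_i=X_i$, $S_i=A'\cap X_i$ in Lemma~\ref{lemma:covers_and_cuts}. For each vertex $v$, some $X_i$ contains $N^r_{G'}[v]$, so every shortest $(v,A')$-path of length at most $r$ stays in $V_i$ and ends in $S_i$. Since $A' \cap X_i$ lies on the outerface of $G'[X_i]$ under the induced plane embedding, Corollary~\ref{corollary:bound_profiles_outer_face} gives a per-piece bound of $(r+1)^3|A'\cap X_i|^3$, and Lemma~\ref{lemma:covers_and_cuts} yields
\[
|\Pi_{r,G'}[V(G') \to A']| \leq (r+1)^3 \sum_i |A' \cap X_i|^3.
\]

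The main obstacle is turning this inequality into a linear-in-$|A|$ bound; the naive estimate $\sum_i|A' \cap X_i| \leq 18|A'|$ only yields a cubic dependence. To recover linearity, I would refine the decomposition so that every cut $S_i$ has size $\bigO(r)$: whenever $|A' \cap X_i|$ exceeds $r$, split $A' \cap X_i$---viewed as a disjoint union of arcs of the outerface cycle of $G'$---into chunks of at most $r$ consecutive outerface vertices, and replace the pair $(X_i, A'\cap X_i)$ by several pairs, each associated to a single chunk together with an appropriately restricted ambient set chosen so that the covering hypothesis of Lemma~\ref{lemma:covers_and_cuts} still holds. Verifying this covering condition is the delicate step and is where the planar outerface structure of $G'$ must be exploited carefully (notably the fact that $A'$ lies on a single cycle, so its partition into arcs has a natural cyclic structure compatible with shortest paths in $G'$). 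Once this is in place, the refined decomposition has $\bigO(|A'|/r) = \bigO(|A|)$ pairs, each contributing $(r+1)^3\cdot r^3 = \bigO(r^6)$, and summing yields the announced bound $|\Pi_{r,G}[V(G) \to A]| = \bigO(r^6|A|)$.
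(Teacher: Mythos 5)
Your architecture is exactly that of the paper's second proof: reduce to the outerface via Lemma~\ref{lemma:reduction_outerface}, localize via the $(r,24r,18)$-sparse cover of Theorem~\ref{theorem:construct_X_planar}, and finish each piece with Corollary~\ref{corollary:bound_profiles_outer_face}. You also correctly identify that the naive summation only gives a cubic dependence on $|A|$ and that one must refine each piece into sub-pieces whose separators have size $\bigO(r)$. However, the step you flag as ``delicate'' --- choosing the restricted ambient sets so that the covering hypothesis of Lemma~\ref{lemma:covers_and_cuts} still holds --- is precisely the crux, and as stated your plan for it does not work. If the new pairs are (restricted ambient set, chunk of at most $r$ consecutive vertices of $A'$), the hypothesis of Lemma~\ref{lemma:covers_and_cuts} requires a \emph{single} index $i$ such that, for \emph{every} $a\in A'\cap N^r[v]$, some shortest $(v,a)$-path meets $S_i$ before leaving $V_i$; but a vertex $v$ deep inside $X$ can have short paths to $A'$-vertices scattered over many different arcs of the outerface cycle, and none of these paths need come anywhere near one fixed chunk. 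The set $S_i$ must therefore be a genuine topological separator enclosing $v$, not merely a subset of the target set $A'$.

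The missing idea is where the radius bound $24r$ from the sparse cover gets used. In the paper, one marks every $(r{+}1)$-th vertex of $A'\cap X$ along the outerface cycle $C'$ and joins all marked vertices to a common center $s$ of $G'[X]$ by shortest paths, yielding a tree $T$ of height at most $24r$. For consecutive marked vertices $a,b$, the tree path $P_{a,b}$ (at most $48r+1$ vertices) together with the arc of $C'$ from $a$ to $b$ bounds a closed region $V_{a,b}$; the separator is the \emph{entire boundary} $A'_{a,b}=V(P_{a,b})\cup(A'\cap\text{arc})$, of size at most $49r+1$, which lies on the outerface of the region subgraph so that Corollary~\ref{corollary:bound_profiles_outer_face} applies with $|A'_{a,b}|=\bigO(r)$. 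Any path leaving $V_{a,b}$ must cross this boundary, which is what validates the hypothesis of Lemma~\ref{lemma:covers_and_cuts}. Without the tree paths (or some equivalent family of $\bigO(r)$-length curves cutting $X$ into regions, which is exactly what the radius bound buys you), there is no way to confine the shortest paths, so your proposal has a genuine gap at its central step even though the surrounding scaffolding and the final arithmetic ($\bigO(|A'|/r)$ regions, each contributing $\bigO(r^6)$) are correct.
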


\begin{proof}
If some vertex is at distance more than $r$ from $A$, then we can remove it
without decreasing the size of $\Pi_{r,G}[V(G) \to A]$
(remember that for convenience, the profile which is  $+\infty$ everywhere does not belong to this set).
Thus, we may assume that every vertex in $G$ is at distance at most $r$ from
at least one vertex of $A$.
Moreover, each connected component of $G$ can be considered separately (with the restriction of $A$ to the component), so we may suppose that $G$ is connected. 

By Lemma~\ref{lemma:reduction_outerface}, there exists a plane graph $G'$
and $A' \subseteq V(G)$ such that
\begin{enumerate}[label=(\roman*)]
\item $|A'| \leq 2(2r+1)|A|$;
\item the outerface of $G'$ is bounded by a cycle $C'$;
\item $A'$ is the vertex set of the outerface of $G'$, and
\item $|\Pi_{r,G}[V(G) \to A]| \leq |\Pi_{r,G'}[V(G') \to A']|$.
\end{enumerate}

Next, apply Theorem~\ref{theorem:construct_X_planar} to $G'$, and let $\mathcal{X}$ be an $(r,24r,18)$-sparse cover of $G'$. 
Our goal is to bound $|\Pi_{r,G'}[X \to A'\cap X]|$ for each $X \in \mathcal{X}$. 

\medskip
Now consider some set $X \in \mathcal{X}$. 
We will show that $|\Pi_{r,G'}[X \to A'\cap X]| \leq 2\times 49^3 (r+1)^5 |A' \cap X|$. 

{\bf Case 1:} $|A'\cap X| \leq r+1$.

We apply directly Corollary~\ref{corollary:bound_profiles_outer_face} and obtain
\[
|\Pi_{r,G'}[X \to A'\cap X]| \leq (r+1)^3|A'\cap X|^3 \leq (r+1)^5|A'\cap X|.
\]

\medskip
{\bf Case 2:} $|A'\cap X| > r+1$.

Consider the cycle $C'$ as being oriented clockwise. 
Let $a_1, \dots, a_\ell$ denote the vertices in  $A'\cap X$ according to their order on $C'$. 
Let us mark each vertex $a_i$ with $i\in [\ell]$ such that $i=k(r+1)+1$ for some integer $k \geq 0$.  
Note that there are at least two marked vertices, since $\ell=|A'\cap X| > r+1$.  
We say that two marked vertices are {\em consecutive} if they are consecutive among marked vertices in the clockwise order around $C'$. 
As $G'[X]$ has radius at most $24r$, there exists a vertex $s \in X$
that is at distance at most $24r$ in $G'[X]$ from every vertex in $X$. 
By considering the union of a shortest $(a,s)$-path for every marked vertex $a$, one can find a tree $T$ in $G'$ rooted at $s$ containing all marked vertices and having height at most $24r$; see Figure~\ref{fig:planar_NC}. 
(The {\em height} of a rooted tree is defined as the maximum length of a path from a root to a leaf in the tree.)  

\begin{figure}[ht]
    \centering
    \begin{tikzpicture}   
        \draw (0,0) circle (5);
        \node[vertexverysmall, red, top color=red, bottom color=red] (s) at (0,0) {};
        
        \foreach \i in {0,...,14}{
          \node[vertexverysmall] (a\i) at (\i*360/15:5) {};
        }
        
        \node[vertexverysmall, red, top color=red, bottom color=red] (a1) at (1*360/15:5) {};
        \node at ($(1*360/15:5) + (60:.5)$) {$a_1$};
        
        \node[vertexverysmall, red, top color=red, bottom color=red] (a4) at (4*360/15:5) {};
        \node at ($(4*360/15:5) + (90:.5)$) {$a_{r+2}$};
        
        \node[vertexverysmall, red, top color=red, bottom color=red] (a8) at (7*360/15:5) {};
        \node at ($(7*360/15:5) + (180:.75)$) {$a_{2r+3}$};
        
        \node[vertexverysmall, red, top color=red, bottom color=red] (a8) at (10*360/15:5) {};
        \node at ($(10*360/15:5) + (-120:.5)$) {$a_{3r+4}$};
        
        \node[vertexverysmall, red, top color=red, bottom color=red] (a8) at (13*360/15:5) {};
        \node at ($(13*360/15:5) + (-45:.55)$) {$a_{4r+5}$};
        
        \node at (2.5*360/15:3) {$V_{a_1,a_{r+2}}$};
        \node at (5.5*360/15:3) {$V_{a_{r+2},a_{2r+3}}$};
        \node at (8.5*360/15:3) {$V_{a_{2r+3},a_{3r+4}}$};
        \node at (11.5*360/15:3) {$V_{a_{3r+4},a_{4r+5}}$};
        \node at (13.5*360/15:3) {$V_{a_{4r+5},a_{1}}$};
        
        \draw[red, very thick, dashed] (0,0) -- (0:2) -- (a1);
        \draw[red, very thick, dashed] (0:0) -- (-4.5*360/15:1.5) -- (a13);
        \draw[red, very thick, dashed] (-4.5*360/15:1.5) -- (a10);
        \draw[red, very thick, dashed] (0:0) -- (5.5*360/15:1.5) -- (a7);
        \draw[red, very thick, dashed] (5.5*360/15:1.5) -- (a4);
        
        \node at (-160:0.5) {$s$};
        \node[red] at (15:1.8) {$T$};
        
        \end{tikzpicture}
    \caption{The main step in the proof of Theorem~\ref{theorem:NC_planar_degree_6}.}
    \label{fig:planar_NC}
\end{figure}
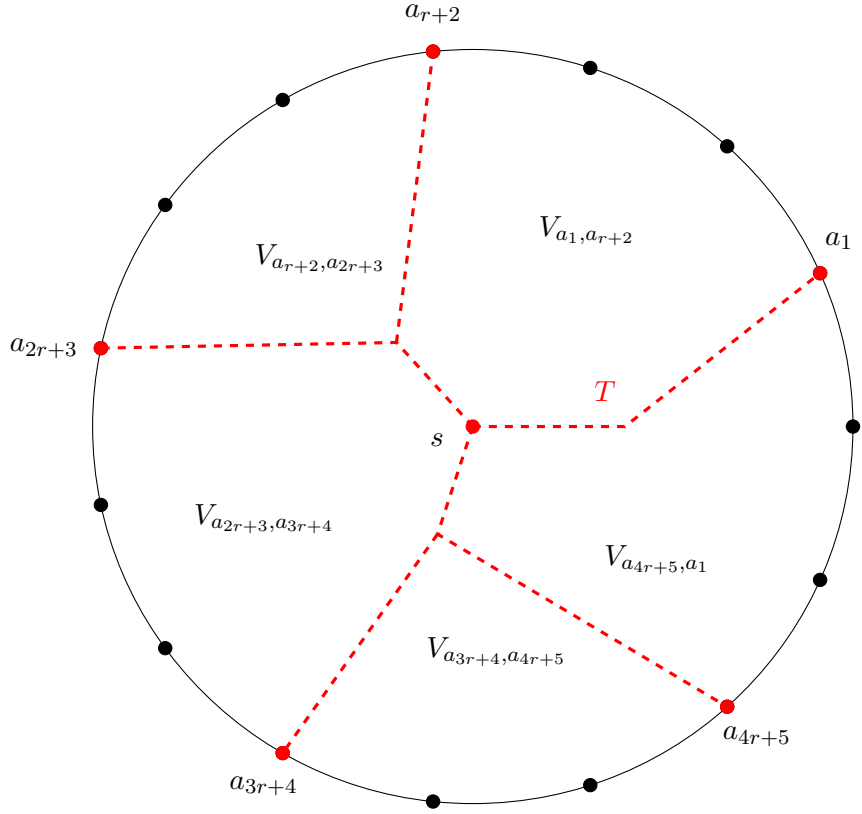

Consider a pair $a, b$ of consecutive marked vertices. 
Then there is a (unique) path $P_{a, b}$ linking $a$ and $b$ in $T$.   
Furthermore, $P_{a, b}$ contains at most $48r+1$ vertices. 
Let $V_{a, b}$ be the set of vertices of $G'$ enclosed by the cycle $B_{a, b}$ defined as the union of $P_{a, b}$ and the path from $a$ to $b$ on the (oriented) cycle $C'$, including vertices in the cycle $B_{a, b}$ itself.  
We also denote by $G'_{a, b}$ the subgraph of $G'$ with vertex set $V_{a, b}$ and edge set all the edges drawn in the closed disc bounded by $B_{a, b}$.
In other words, $G'_{a, b}$ is obtained from $G'[V_{a, b}]$ by removing the chords of $B_{a, b}$ drawn outside $B_{a, b}$ in $G'$. 
Let $A'_{a, b}$ be the union of $V(P_{a, b})$ and   
all vertices in $A'\cap X$ on the $(a,b)$-path in $C'$ going from $a$ to $b$ on $C'$ in clockwise order. 
Observe that $|A'_{a, b}| \leq 49r+1$ and that all vertices in $A'_{a, b}$ lie on the outerface of $G'_{a, b}$.    
Thus, by Corollary~\ref{corollary:bound_profiles_outer_face},
$|\Pi_{r,G'_{a, b}}[V_{a, b}\to A'_{a, b}]| \leq (r+1)^3(49r+1)^3$.

Now, let $Q$ be the set of pairs $(a,b)$ of consecutive marked vertices. 
Note that $|Q| = \left\lceil \frac{|A'\cap X|}{r+1}\right\rceil \leq 2 \frac{|A'\cap X|}{r+1}$. 
Hence, 
\[
\begin{split}
|\Pi_{r,G'[X]}[X \to A'\cap X]| 
&\leq \sum_{(a,b)\in Q}|\Pi_{r,G'_{a, b}}[V_{a, b} \to A'_{a, b}]| \\
&\leq |Q| (r+1)^3(49r+1)^3 \\
&\leq 2 \times 49^3 (r+1)^5 |A' \cap X|. \\
\end{split}
\]

\medskip

Thus, in both cases $|\Pi_{r,G'[X]}[X \to A'\cap X]| \leq 2\times 49^3 (r+1)^5 |A' \cap X|$ holds for every $X \in \mathcal{X}$, as claimed. 

It follows that 
\[
\def\arraystretch{1.5}
\begin{array}{r l l}
|\Pi_{r,G}[V(G) \to A]| 
&\leq |\Pi_{r,G'}[V(G') \to A']| & \text{by (iv)} \\
&\leq \sum_{X \in \mathcal{X}} |\Pi_{r,G'[X]}[X \to A' \cap X]| & \text{by Lemma~\ref{lemma:covers_and_cuts}} \\
&\leq \sum_{X \in \mathcal{X}} 2 \times 49^3 (r+1)^5 |A' \cap X| & \\
&\leq 2 \times 49^3 (r+1)^5 18|A'| & \\
&\leq 49^3 \times 36 (r+1)^5 2(2r+1)|A| & \text{by (i)}\\
&\leq 10^8 (r+1)^6 |A|
\end{array}
\]
which concludes the proof of the theorem. 
(Note that we made no attempt to optimize the constant $10^8$ in the proof.)
\end{proof}

\subsection{Third proof}

We will now improve the bound in Theorem~\ref{theorem:NC_planar_degree_6} from $\bigO(r^6 |A|)$ to $\bigO(r^4 |A|)$ using the following theorem of Li and Parter~\cite{li_parter_stoc_2019}. 

\begin{theorem}[Li and Parter~\cite{li_parter_stoc_2019}]\label{theorem:base_bound_Li_Parter}
    Let $G$ be a plane graph, let $A$ be a nonempty set of consecutive vertices on the boundary of a face of $G$, and let $r$ be a nonnegative integer. 
    Then $|\Pi_{G,r}[V(G) \to A]| \in \bigO(r|A|^3)$. 
\end{theorem}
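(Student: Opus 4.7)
The plan is to tighten the bound $\bigO(r^3 |A|^3)$ of Corollary~\ref{corollary:bound_profiles_outer_face} by a factor of $r^2$, using the additional structure that $A$ consists of consecutive vertices of a single face rather than merely being contained in the outerface.

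First I would normalize the setup. By redrawing $G$ so that the face containing $A$ becomes the outerface, and by adding the appropriate portion of the facial cycle through $A$ (subdividing other facial edges if necessary), I may assume that $A = a_1, a_2, \ldots, a_n$ lies in cyclic order on the outerface boundary and that $a_i a_{i+1}$ is an edge for every consecutive pair. This reduction only changes $|A|$ by a constant factor, so the bound is unaffected. After the reduction, the map $i \mapsto \dist_G(v, a_i)$ is $1$-Lipschitz (in the cyclic index $i$) for every vertex $v$, and any profile at distance $r$ is $1$-Lipschitz as a function $[n] \to \{0, \ldots, r, +\infty\}$ once we treat $+\infty$ as the value immediately above $r$.

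Second, I would stratify the profiles by their minimum finite value. For each $k \in \{0, 1, \ldots, r\}$, let $\Pi_k$ denote the set of profiles $\pi \in \Pi_{r,G}[V(G) \to A]$ with $\min_{a \in A} \pi(a) = k$. Every profile in $\Pi_{r,G}[V(G) \to A]$ lies in exactly one of the $r+1$ layers $\Pi_0, \ldots, \Pi_r$, so it is enough to show $|\Pi_k| = \bigO(|A|^3)$ for each $k$.

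The third and main step is to bound each $|\Pi_k|$ by $\bigO(|A|^3)$ using planarity in a structured way. The key geometric input is that when all the sources $a_1, \ldots, a_n$ lie on a single face of a plane graph, one can choose shortest-path trees $T_1, \ldots, T_n$ (rooted at the respective $a_i$) that are pairwise non-crossing with respect to the planar embedding. This non-crossing property yields a Monge-type inequality for distances to $A$, which in turn forces the cyclic sequence $(\pi(a_i))_{i \in [n]}$ to have only $\bigO(1)$ \emph{turning indices}, that is, positions at which the qualitative regime of the profile changes (from decreasing to increasing, from finite to $+\infty$, etc.). Together with the $1$-Lipschitz condition and the known minimum value $k$, specifying $\bigO(1)$ indices in $[n]$ determines the profile completely; the number of such tuples of indices is $\bigO(|A|^3)$, yielding the claim and hence the overall bound $\bigO(r |A|^3)$.

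The main obstacle is making the third step rigorous: namely, converting the planar non-crossing property of multiple-source shortest-path trees into a proof that each profile is specified by $\bigO(1)$ indices. This is precisely where the metric-compression machinery of Li and Parter enters; it relies on a careful analysis of the interaction between non-crossing shortest paths and the planar dual, and is significantly more delicate than the generic VC-dimension argument used to prove Corollary~\ref{corollary:bound_profiles_outer_face}.
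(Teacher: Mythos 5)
This theorem is not proved in the paper at all: it is imported verbatim from Li and Parter~\cite{li_parter_stoc_2019} and used as a black box in the third proof of the planar bound. So there is no internal proof to compare against, and your proposal has to stand on its own as a proof. It does not. Your first two steps are fine but do essentially nothing: since $A$ is a contiguous arc of a facial walk, consecutive elements of $A$ are already joined by facial edges, so the $1$-Lipschitz property is immediate, and stratifying by the minimum finite value only accounts for the factor $r$. All of the content is in your third step, and there you both defer to ``the metric-compression machinery of Li and Parter'' (which is circular, since that machinery \emph{is} the theorem you are asked to prove) and, more importantly, assert a structural claim that is false.

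Concretely, the claim that planarity forces the cyclic sequence $(\pi(a_i))_i$ to have $\bigO(1)$ turning indices fails. Take $a_1,\dots,a_n$ forming a path on the outerface, a vertex $v$ below them, and internally disjoint paths of length $L$ from $v$ to each odd-indexed $a_i$; this is a plane graph with $A$ consecutive on the outerface, yet $\dist(v,a_i)$ alternates between $L$ and $L+1$, giving $\Theta(n)$ local minima. So no single profile is determined by $\bigO(1)$ indices plus its minimum, and the theorem cannot be proved profile-by-profile in the way you describe: the bound $\bigO(r|A|^3)$ is a \emph{global} counting statement about how many distinct profiles can coexist in one plane graph, and Li and Parter's argument (via non-crossing shortest paths and a pattern/compression analysis of difference vectors $\dist(u,a_{i+1})-\dist(u,a_i)$) is of a genuinely different nature. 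Note also that a $1$-Lipschitz function on $[n]$ with prescribed minimum is far from determined by $\bigO(1)$ data in general --- there are exponentially many such functions --- so without a correct planarity input your counting collapses entirely.
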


A {\em near-triangulation} is a plane graph whose outerface is bounded by a cycle of the graph, and all of whose inner faces are bounded by triangles of the graph. 
We will use the following well-known variation on a lemma of Sperner, see e.g. \cite{AZ18}.  

\begin{lemma}[Sperner's Lemma]\label{lemma:sperner}
    Let $G$ be a near-triangulation and suppose that the cycle bounding its outerface is partitioned into three non empty paths $P_1,P_2,P_3$.
    Let $\phi: V(G) \to \{1,2,3\}$ be a coloring of the vertices of $G$ such that $\phi(u)=i$ for every $u \in V(P_i),i=1,2,3$.
    Then there is an inner face bounded by a triangle $xyz$ in $G$ with $\phi(x)=1,\phi(y)=2,\phi(z)=3$.
\end{lemma}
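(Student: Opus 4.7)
The plan is to carry out the classical parity argument behind Sperner's Lemma. Define a \emph{door} to be an edge of $G$ whose two endpoints are colored $1$ and $2$. I will count, modulo~$2$, the quantity
\[
N \;=\; \sum_{F} \big|\{\text{doors on the boundary of } F\}\big|,
\]
where $F$ ranges over the inner (triangular) faces of $G$, in two different ways and conclude that $N$ is odd, forcing the existence of a rainbow triangle.

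For the first count, I examine each inner face individually. An inner triangular face with color multiset $\{1,2,3\}$ (a \emph{rainbow} triangle) contains exactly one door; a face with color multiset $\{1,1,2\}$ or $\{1,2,2\}$ contains exactly two doors; every other face contains none. Hence $N$ is congruent modulo~$2$ to the number of rainbow triangles, which is the quantity we want to show is nonzero.

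For the second count, I regroup the contributions by edge. Each interior edge of $G$ lies in exactly two inner faces---both triangular, since $G$ is a near-triangulation---so every interior door contributes $2 \equiv 0 \pmod 2$ to $N$, while each edge of the outer cycle lies in exactly one inner face. Therefore $N$ is congruent modulo~$2$ to the number of doors on the outer cycle, and it remains to show that this number is odd. This is the only delicate step: since each $P_i$ is a nonempty path and the vertex sets $V(P_1),V(P_2),V(P_3)$ partition the vertex set of the outer cycle, the three color classes occupy three disjoint contiguous arcs of the cycle with exactly three transitions between consecutive arcs. Edges internal to some $P_i$ are monochromatic and hence not doors; among the three transition edges, exactly one joins $V(P_1)$ with $V(P_2)$, the other two involving color~$3$. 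So the outer cycle carries exactly one door, hence $N \equiv 1 \pmod 2$, and at least one inner face must be a rainbow triangle.
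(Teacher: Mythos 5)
Your parity argument is correct and complete: the face-by-face count, the regrouping by edges (using that interior edges lie on two inner faces and outer-cycle edges on exactly one), and the verification that the outer cycle carries exactly one $\{1,2\}$-colored edge are all handled properly. The paper does not prove this lemma itself but cites it as a standard variant of Sperner's Lemma, and yours is precisely the classical door-counting proof that the cited source gives.
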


Given a vertex subset $R$ of a graph $G$ and a rooted forest $F$ in $G$, we say that {\em $F$ is rooted in $R$} if $R$ coincides with the set of roots of the trees in $F$. 
The {\em height} of a rooted forest $F$ is the maximum length of a path from a root to a leaf in $F$. 
The proof strategy for the following lemma is adapted from ideas in~\cite{PS21, dujmovic_planar_2020}, in particular it relies on a variant of the so-called `tripod decomposition' from~\cite{ dujmovic_planar_2020}. 

\begin{lemma}\label{lemma:structural_lemma_best_bound_planar}
Let $r$ be a positive integer. 
Let $G$ be a connected plane graph whose outerface is bounded by a cycle $C$. 
Let $\widetilde{G}$ be a plane near-triangulation with $V(\widetilde{G})=V(G)$ obtained from $G$ by triangulating every inner face of $G$ arbitrarily. 
Suppose that $G$ has a spanning forest $F$ rooted in $V(C)$ whose height is at most $r$. 
Then, there exists a subgraph $H$ of $\widetilde{G}$ satisfying: 
\begin{enumerate}
    \item $C$ is contained in $H$; 
    \item $H$ has at most $3|V(C)|/r+1$ inner faces, and
    \item \label{prop:inner_face_lemma}
    every inner face of $H$ is bounded by a cycle of $H$ of length at most $6r$ containing at most three edges not in $G$. 
\end{enumerate}
\end{lemma}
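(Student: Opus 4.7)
My plan is to adapt the tripod-decomposition technique underlying the proof of Theorem~\ref{theorem:product_structure_planar} (see~\cite{dujmovic_planar_2020}) and combine it with repeated applications of Sperner's lemma (Lemma~\ref{lemma:sperner}). A \emph{tripod} in $\widetilde{G}$ will be a subgraph consisting of an inner triangular face $xyz$ of $\widetilde{G}$ together with the three paths $P_x, P_y, P_z$ in $F$ joining $x, y, z$ to their respective roots in $V(C)$. Since $F$ has height at most $r$, each of the three arms of a tripod has length at most $r$, and its triangle contributes at most three non-$G$ edges.

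I would construct $H$ incrementally. Set $H_0 := C$ and iteratively let $H_{i+1} := H_i \cup T_{i+1}$ for a carefully chosen tripod $T_{i+1}$, until every inner face of $H_i$ is bounded by a cycle of length at most $6r$ with at most three non-$G$ edges. At each iteration I would pick an inner face $f$ of $H_i$ that is still ``bad,'' partition the boundary cycle of $f$ into three contiguous pieces, and extend this to a $3$-coloring of all vertices of $\widetilde{G}$ lying in the closure of $f$ by coloring an interior vertex $v$ according to the piece containing the boundary-endpoint of $v$'s path in $F$ (restricted to $\overline{f}$). Sperner's lemma applied to this colored near-triangulation produces an inner triangular face $xyz$ of $\widetilde{G}$ whose three vertices all receive distinct colors, and the corresponding tripod has arms landing in three different pieces of $\partial f$, so it splits $f$ into three sub-faces, each bounded by one of those pieces together with two $F$-arms (of length $\leq r$) and a single edge of the triangle $xyz$.

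For the counting, I would pre-mark about $3|V(C)|/(2r)$ vertices along $C$, partitioning $V(C)$ into short arcs that serve as the ``targets'' for the endpoints of tripod arms, and arrange each tripod so that its three arms anchor in three distinct marked arcs. Each iteration then consumes a predictable amount of $C$-length and increases the total number of inner faces by at most two, so starting from a single face in $H_0$ we obtain at most $2 \cdot \tfrac{3|V(C)|}{2r} + 1 = \tfrac{3|V(C)|}{r} + 1$ final inner faces. The perimeter bound $6r$ and the at-most-three non-$G$-edge bound then follow by a direct inspection of the sub-face structure: each face of $H$ will inherit the three triangle edges of the (at most three) tripods bounding it, together with three forest-path segments each of length at most $r$ and short portions of $C$ of total length at most $r$.

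The main obstacle is the joint bookkeeping: one must simultaneously maintain that every inner face has at most three non-$G$ boundary edges and total perimeter at most $6r$ throughout the iteration, while also showing that the Sperner setup remains well-defined after several tripods have been added. The subtle point is that once pieces of previously added tripod arms and triangle edges lie on $\partial f$, the forest $F$ restricted to $\overline{f}$ no longer has its roots only on $C$, so the coloring needs to be re-anchored to the current boundary of $f$. Designing the iteration (tripod order, partition of $\partial f$, extension of the coloring through $F$) so that the three invariants hold simultaneously, and that the Sperner-produced tripod's arms stay strictly inside $f$ without crossing previously added tripods, is the technical heart of the argument.
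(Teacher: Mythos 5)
Your high-level strategy (tripods built from an inner triangle of $\widetilde{G}$ plus vertical paths of $F$, iterated Sperner inside the remaining bad faces, recoloring anchored to the current face boundary) is exactly the paper's, but two pieces of your argument do not work as stated. First, the face count: adding a tripod to a face produces \emph{four} new faces (the three side regions plus the central triangle, which is itself an inner face of $H$ since it bounds a face of $\widetilde{G}$), so the increment is $+3$ per tripod, not $+2$. Your arithmetic $2\cdot\tfrac{3|V(C)|}{2r}+1$ only coincidentally lands on the right bound; with the correct increment you need the number of tripods to be at most $|V(C)|/r$. Second, and more seriously, your mechanism for bounding the number of tripods --- pre-marking arcs of $C$ and ``arranging'' each tripod so its three arms anchor in three distinct marked arcs --- is not something Sperner lets you arrange: the lemma hands you a trichromatic triangle with no control over where its arms land relative to pre-chosen arcs, and arms of distinct tripods may share vertices (the tripods in this construction genuinely overlap), so distinct tripods need not consume distinct arcs. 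The paper's device is different: it tracks only \emph{one} distinguished foot per tripod and chooses the $3$-coloring of a bad face so that the color-$3$ class consists of the ``middle'' of the $C$-portion of the boundary (the first $r$ and last $r$ vertices of that portion get colors $2$ and $1$ respectively, matching the two tripod branches on the boundary); the new distinguished foot is then automatically at distance at least $r$ along $C$ from every previously distinguished foot, giving at most $|V(C)|/r$ tripods.

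The invariant maintenance you flag as ``the technical heart'' is also where your sketch would break if pushed naively: splitting a face into three pieces bounded by ``a piece of $\partial f$ plus two $F$-arms plus one triangle edge'' does not give perimeter at most $6r$ when the inherited piece of $\partial f$ is long, so bad faces persist and you must guarantee they keep a very specific shape (boundary equal to a path on $C$ concatenated with a single tripod side) for the next Sperner step to apply and for the non-$G$-edge count to stay at one on such faces. The paper encodes this as an explicit trichotomy on inner faces (short faces with at most two non-$G$ edges, faces of the form $C$-path plus tripod side, and tripod centers), proves it is preserved by each tripod insertion, and runs the argument by taking a collection of tripods maximizing $|\mathcal{Y}|$ rather than an explicit loop. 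Without these invariants your induction has no usable hypothesis at the next step, so the proposal as written is incomplete even granting the corrected counting.
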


\begin{proof}
    We start by introducing some terminology used in the proof.  
    A {\em vertical path} in $F$ is a subpath of a path between a vertex and a root in $F$. Vertical paths are naturally oriented towards the root; the {\em source} and {\em sink} of a vertical path are defined according to this orientation.  
    A \emph{tripod} $Y$ is the union of three pairwise disjoint vertical paths $P_1,P_2,P_3$ whose sources form a triangle $t$ bounding a face in $\widetilde{G}$.
    We call $P_1,P_2,P_3$ the \emph{branches} of $Y$, $t$ the \emph{center} of $Y$, and the sinks of $P_1,P_2,P_3$ the \emph{feet} of $Y$. 
    The path obtained by taking the union of two branches and the edge of the center linking their sources is called a {\em side} of the tripod. 
    (Thus, every tripod has three sides.) 
    
    We maintain a collection $\mathcal{Y}$ of tripods, a distinguished foot $f_Y$ of $Y$ for every tripod $Y \in \mathcal{Y}$, and a subgraph $H$ of $\widetilde{G}$ satisfying the following properties: 
    \begin{enumerate}
        \item \label{prop:1}
        every face of $H$ is bounded by a cycle of $H$ (i.e.\ $H$ is $2$-connected);
        \item \label{prop:2} $C$ is contained in $H$, and thus $C$ bounds the outerface of $H$;
        \item \label{prop:3} $V(H) = V(C) \cup \bigcup_{Y \in \mathcal{Y}} V(Y)$; 
        \item \label{prop:4} for every inner face $f$ of $H$, the cycle $C_f$ of $H$ bounding $f$ satisfies at least one of the following three properties: 
        \begin{enumerate}
        \item \label{prop:a} $C_f$ has length at  most $6r$ and contains at most two edges in $E(\widetilde{G}) \setminus E(G)$, 
        \item \label{prop:b} the intersection of $C_f$ with $C$ is a path, and moreover 
        $C_f$ is the union of two internally disjoint paths $P_f, Q_f$ where $P_f = C_f\cap C$ and $Q_f$ is a side of some tripod in $\mathcal{Y}$,
        \item \label{prop:c} $C_f$ is the center of a tripod $Y$ in $\mathcal{Y}$,
        \end{enumerate}
        \item \label{prop:5} the number of inner faces of $H$ is at most $3|\mathcal{Y}|+1$, and
        \item \label{prop:6}
        for every tripod $Y \in \mathcal{Y}$, $f_Y$ belongs to $C$, and the elements of $(f_Y)_{Y \in \mathcal{Y}}$ are pairwise at distance at least $r$ on $C$.
    \end{enumerate}
    For the reader familiar with the tripod decomposition from~\cite{ dujmovic_planar_2020}, let us emphasize that here the tripods in $\mathcal{Y}$ are allowed to intersect, and they always have three branches. 

    First, we show that objects $\mathcal{Y}, (f_Y)_{Y \in \mathcal{Y}},H$ satisfying properties \ref{prop:1}--\ref{prop:6} do exist. 
    This can be seen as follows.     
    Partition $C$ into three non-empty paths $P_1, P_2, P_3$, and define a coloring $\phi$ of $V(\widetilde{G})$ as follows:
    \begin{itemize}
        \item for every $i=1,2,3$, for every $v \in V(P_i)$, let $\phi(v)=i$, and
        \item every vertex of $\widetilde{G}$ not in $C$ receives the color of its parent in $F$.
    \end{itemize} 
    Observe that the coloring is well defined, since all roots of $F$ are colored in the first step.      
    By Lemma~\ref{lemma:sperner}, there is a triangle $uvw$ in $\widetilde{G}$ such that $\phi(u)=1, \phi(v)=2$ and $\phi(w)=3$.
    Let $Y$ be the tripod with center $uvw$ and whose branches are the three vertical paths starting at respectively $u,v,w$ and ending on $C$ (including their endpoints on $C$), 
    and let $f_Y$ be the sink of one (arbitrarily chosen) branch of $Y$.
    Then the triple $\mathcal{Y} = \{Y\}, (f_Y), H=C \cup Y$ satisfies properties \ref{prop:1}--\ref{prop:6}. 
    Then the triple $\mathcal{Y} = \{Y\}, (f_Y), H=C \cup Y$ satisfies properties \ref{prop:1}--\ref{prop:6}:
    \ref{prop:1}, \ref{prop:2}, \ref{prop:3}, \ref{prop:5} and \ref{prop:6} are clear from the definition, and for every inner face $f$ of $H$, either \ref{prop:b} or \ref{prop:c} holds.

    We now consider a choice for $\mathcal{Y}, (f_Y)_{Y \in \mathcal{Y}},H$ satisfying properties \ref{prop:1}--\ref{prop:6} and maximizing $|\mathcal{Y}|$. 
    We claim that every inner face $f$ of $H$ satisfies property~\ref{prop:inner_face_lemma} of the lemma, that is: 

\begin{itemize}
    \item[(*)] $f$ is bounded by a cycle $C_f$ of $H$ of length at most $6r$ and $C_f$ contains at most three edges not in $G$. 
\end{itemize}
    To see this, suppose for a contradiction that this does not hold for some inner face $f$ of $H$.     
    Then the cycle $C_f$ does not satisfy properties~\ref{prop:a} and~\ref{prop:c}, and thus it must satisfy property~\ref{prop:b}, that is,  $C_f$ is the union of two internally disjoint paths $P_f, Q_f$ where $P_f = C_f\cap C$ and $Q_f$ is a side of some tripod in $Y^* \in \mathcal{Y}$. 
    Since $F$ is a subgraph of $G$,
    this implies that $C_f$ contains at most {\em one} edge not in $G$, namely the unique edge of the center of $Y^*$ in $C_f$ (if this edge is not in $G$).
    Since $C_f$ does not satisfy property~\ref{prop:a}, it follows that $C_f$ has length at least $6r+1$. 
    Observe that $Q_f$ has length at most $2r+1$, since the rooted forest $F$ has height at most $r$. 
    Thus, $P_f$ has length at least $6r+1 - (2r+1) = 4r$. 
    
    Let $\widetilde{G}_f$ be the subgraph of $\widetilde{G}$ consisting of all the vertices and edges of $\widetilde{G}$ that are in the closed disc defined by $C_f$.     
    Let $Q_{f,1}$ and $Q_{f,2}$ be the two branches of $Y^*$ that are contained in $C_f$. 
    We orient $C_f$ clockwise. 
    Starting from $P_f$ and going along $C_f$ in this order, we may assume that we meet $Q_{f,1}$ before $Q_{f,2}$. Note that the orientation of $C_f$ induces an orientation of $P_f$, which we use below.

    Let $\phi:V(\widetilde{G}_f) \to \{1,2,3\}$ be the coloring of the vertices of $\widetilde{G}_f$ defined as follows:
    \begin{itemize}
        \item vertices in $Q_{f,1}$ and the last $r$  vertices of $P_f$ are colored $1$;
        \item vertices in $Q_{f,2}$ and the first $r$ vertices of $P_f$ are colored $2$; 
        \item the remaining uncolored vertices of $P_f$ are colored $3$, and
        \item every vertex in the proper interior of $f$ receives the color of its parent in $F$.
    \end{itemize}
    Note that the coloring of a vertex $v$ considered in the last item above is defined inductively, on the distance from $v$ to the root $r$ of $F$ of the corresponding tree in $F$. It is well defined since $r$ is not in the proper interior of $f$ and thus the vertical path from $v$ to $r$ in $F$ intersects $C_f$, which is already colored.

    Observe that there are vertices colored $3$ on $C_f$ since $|V(P_f)| \geq 4r \geq 2r+1$. 
    Thus, by Lemma~\ref{lemma:sperner}, there exists a triangle $uvw$ in $\widetilde{G}_f$ with $\phi(u)=1,\phi(v)=2,\phi(w)=3$.
    Let $Y'$ be the tripod with center $uvw$ and whose three branches are the three vertical paths starting at respectively $u,v,w$ and ending in $C_f$ (including their endpoints in $C_f$), see Figure~\ref{fig:best_bound_planar}.
    \begin{figure}[ht]
        \centering
        \includegraphics{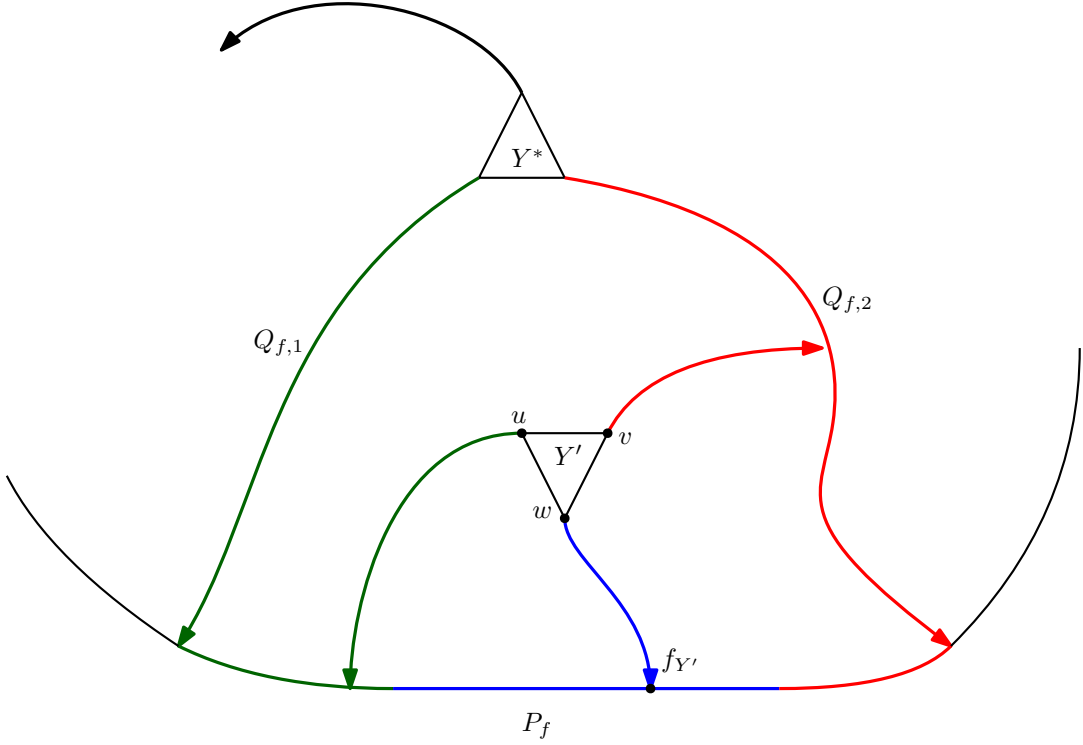}
        \caption{Illustration of the proof of Lemma~\ref{lemma:structural_lemma_best_bound_planar}.}
        \label{fig:best_bound_planar}
    \end{figure}

    We now define $\mathcal{Y'}:=\mathcal{Y} \cup \{Y'\}$, we let $f_{Y'}$ be the sink of the branch of $Y'$ starting at $w$, and let $H'$ be obtained from $H$ by adding the tripod $Y'$ to $H$ (that is, $H':=H \cup Y'$).
    We now check that $\mathcal{Y'},(f_Y)_{Y \in \mathcal{Y'}},H'$ satisfy properties \ref{prop:1}--\ref{prop:6}:
    \begin{enumerate}
        \item Every face of $H'$ is bounded by a cycle of $H'$. 
        \item $C$ is contained in $H'$.
        \item $V(H') = V(C) \cup \bigcup_{Y \in \mathcal{Y'}} V(Y)$.  
        \item The three inner faces of $H'$ that were not faces of $H$ were created by adding the tripod $Y'$ inside the face $f$ of $H$.
        First the face bounded by the  center of $Y'$ satisfies \ref{prop:c}. We now consider the other created faces.
        Each of these inner faces is bounded by a cycle of $H'$.
        Observe that if such a cycle has a non-empty intersection with $C$ then this intersection is a path contained in $C$. 
        Now, among these three cycles, the cycle that corresponds to colors $1$ and $2$ contains at most $6r$ vertices, since there are at most $3r$ vertices colored $1$ ($2r$ on $C_f$ plus $r$ from $Y'$) and the same for color $2$. 
        This cycle contains at most two edges that are possibly not in $G$, namely one edge from the center of $Y^*$ and the edge $uv$ from the center of $Y'$. 
        Thus, this cycle satisfies property~\ref{prop:a}. 
        By construction, the other two cycles corresponding to the pairs of colors $(1,3)$ and $(2,3)$ respectively, satisfy property~\ref{prop:b}, since each such cycle is the union of a path $P$ contained in $C_f$ with a side of $Y'$, and $P$ intersects $P_f$ in a subpath of $P_f$. 
        \item The number of faces of $H'$ is at most $3|\mathcal{Y}|+1 + 4-1 = 3|\mathcal{Y'}|+1$.
        \item In order to verify this property, one only needs to check $f_{Y'}$: It is in $C$ and moreover it is at distance at least $r$ in $C$ from all the vertices in $\{f_{Y}\}_{Y \in \mathcal{Y}}$ since $f_{Y'}$ was colored with color $3$. (Indeed, observe that every path in $C$ from $f_{Y'}$ to some $f_Y$ with $Y \in \mathcal{Y}$ either contains the  last $r$ vertices of $P_f$, colored $1$, or the first $r$ vertices of $P_f$, colored $2$.) 
    \end{enumerate}
    However, this shows that the triple $\mathcal{Y'},(f_Y)_{Y \in \mathcal{Y'}},H'$ is a better choice than $\mathcal{Y}, (f_Y)_{Y \in \mathcal{Y}},H$, which contradicts the maximality of $|\mathcal{Y}|$. 
    
    We deduce that every inner face $f$ of $H$ satisfies property~\ref{prop:inner_face_lemma} of the lemma. 
    Moreover, it follows from property~\ref{prop:6} that $|\mathcal{Y}| \leq |V(C)|/r$, implying that $H$ has at most $3|\mathcal{Y}|+1 \leq 3|V(C)|/r+1$ inner faces. 
    Therefore, $H$ satisfies all the required properties from the lemma. 
\end{proof}

We turn to the $\bigO(r^4)$ bound on the profile complexity of planar graphs, Theorem~\ref{thm:neighborhood_complexity_planar}. 
It follows from the following theorem. 

\begin{theorem}\label{theorem:NC_planar_degree_4}
    Let $G$ be a planar graph, let $A$ be a nonempty set of vertices of $G$, and let $r$ be a nonnegative integer. 
    Then, $|\Pi_{r,G}[V(G) \to A]| \in \bigO(r^4|A|)$.
\end{theorem}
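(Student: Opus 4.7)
The plan is to combine the three tools developed earlier in this section: Sokołowski's reduction to a set $A'$ lying on a single face (Lemma~\ref{lemma:reduction_outerface}), the tripod-based structural decomposition (Lemma~\ref{lemma:structural_lemma_best_bound_planar}), and the Li--Parter bound (Theorem~\ref{theorem:base_bound_Li_Parter}). First, I would discard vertices at distance more than $r$ from $A$ and work one connected component at a time, so that we may assume $G$ is connected and every vertex of $G$ is within distance $r$ from $A$. Applying Lemma~\ref{lemma:reduction_outerface} yields a connected plane graph $G'$ whose outerface is bounded by a cycle $C'$ with $V(C')=A'$, $|A'|\leq 2(2r+1)|A|$, and $|\Pi_{r,G}[V(G)\to A]|\leq |\Pi_{r,G'}[V(G')\to A']|$. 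Moreover, every vertex of $G'$ remains within distance $r$ of $V(C')$, so a BFS from $V(C')$ produces a spanning forest $F$ of $G'$ rooted in $V(C')$ of height at most $r$.

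Next, I would triangulate the inner faces of $G'$ arbitrarily to obtain a near-triangulation $\widetilde{G'}$, and apply Lemma~\ref{lemma:structural_lemma_best_bound_planar} to $G'$, $\widetilde{G'}$, $F$, and $C'$. This produces a subgraph $H\subseteq \widetilde{G'}$ containing $C'$ with at most $3|V(C')|/r+1=\bigO(|A'|/r)$ inner faces, each bounded by a cycle $C_f$ in $\widetilde{G'}$ of length at most $6r$ containing at most three edges not in $G'$. For each such inner face $f$, let $V_f$ be the vertex set of the closed disc bounded by $C_f$ in the plane. Since $C_f$ is a Jordan curve, $V(C_f)$ is a vertex separator in $G'$ between $V_f\setminus V(C_f)$ and $V(G')\setminus V_f$, so Lemma~\ref{lemma:covers_and_cuts} applies with the cover $\{V_f\}\cup\{V(C')\}$ and separators $S_f=V(C_f)$, $S_{C'}=V(C')$, giving
\begin{equation*}
|\Pi_{r,G'}[V(G')\to A']|\leq \sum_f |\Pi_{r,G'[V_f]}[V_f\to V(C_f)]| + |A'|.
\end{equation*}

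The heart of the argument is to show that $|\Pi_{r,G'[V_f]}[V_f\to V(C_f)]|=\bigO(r^4)$ for each inner face $f$. For this, I would build an auxiliary plane graph $G''_f$ from $G'[V_f]$ by replacing each of the (at most three) missing edges $v_iv_{i+1}$ of $C_f$ with an internally-disjoint path of length $r+1$ through $r$ new subdivision vertices, drawn inside the outer face of $G'[V_f]$ in the region near where the absent diagonal $v_iv_{i+1}$ lay in $\widetilde{G'}$. By construction, the outer face of $G''_f$ is bounded by a single cycle $C^*_f$ containing $V(C_f)$ and having at most $6r+3r=9r$ vertices. Moreover, any path in $G''_f$ that uses at least one new subdivision vertex has length at least $r+1$, so distances at most $r$ between vertices of $V_f$ coincide in $G''_f$ and in $G'[V_f]$. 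Consequently $|\Pi_{r,G'[V_f]}[V_f\to V(C_f)]|\leq |\Pi_{r,G''_f}[V(G''_f)\to V(C^*_f)]|$, and since $V(C^*_f)$ is exactly the boundary of a face of the plane graph $G''_f$, Theorem~\ref{theorem:base_bound_Li_Parter} yields
\begin{equation*}
|\Pi_{r,G''_f}[V(G''_f)\to V(C^*_f)]|=\bigO(r\cdot(9r)^3)=\bigO(r^4).
\end{equation*}

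Combining the pieces, the total is $|\Pi_{r,G}[V(G)\to A]|\leq \bigO(|A'|/r)\cdot \bigO(r^4)+|A'|=\bigO(r^3|A'|)=\bigO(r^4|A|)$, as required. The main obstacle I anticipate is the planar-embedding bookkeeping for $G''_f$: one must verify that the up-to-three added subdivision paths can be routed in the plane so that $C^*_f$ is truly the boundary of a single face of $G''_f$, which uses that a missing edge of $C_f$ is a diagonal of some face of $G'$ and can be re-routed nearby without conflicting with the other added paths, together with a careful check that no subdivision creates an unintended shortcut between original vertices at distances below $r$.
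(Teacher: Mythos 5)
Your proposal follows the paper's own (third) proof almost step for step: the same reduction via Lemma~\ref{lemma:reduction_outerface}, the same BFS forest, the same application of Lemma~\ref{lemma:structural_lemma_best_bound_planar}, the same subdivision trick to restore a bounding cycle of length $\bigO(r)$, and the same invocation of Theorem~\ref{theorem:base_bound_Li_Parter} followed by Lemma~\ref{lemma:covers_and_cuts}. The arithmetic at the end is correct. However, there is one genuine gap, and it is not the one you flag at the end. The obstacle is not the routing of the added subdivision paths but the \emph{chords of $C_f$ drawn outside the face $f$}. The cycle $C_f$ bounds an inner face of $H$, but in $G'$ two non-consecutive vertices of $C_f$ may be joined by an edge drawn in a \emph{different} face of $H$; such an edge belongs to the induced subgraph $G'[V_f]$ (both endpoints lie in $V_f$) but is not drawn inside the disc bounded by $C_f$. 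Your argument cannot accommodate both of your claims simultaneously: if $G''_f$ is built from the full induced subgraph $G'[V_f]$, then these outside chords prevent $C^*_f$ from bounding a face of $G''_f$ (they cannot in general be re-routed into the disc without crossings, e.g.\ when a path of $G'$ inside the disc separates the chord's endpoints along $C_f$), so Theorem~\ref{theorem:base_bound_Li_Parter} does not apply; if instead $G''_f$ contains only the edges drawn in the closed disc, then your claim that distances at most $r$ coincide in $G''_f$ and $G'[V_f]$ is false, since a shortest path in $G'[V_f]$ may use an outside chord as a shortcut.

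The repair is local and is exactly what the paper does. Define $G''_f$ using only the edges drawn in the closed disc (plus the subdivision paths), so that Li--Parter applies, and then observe that for $v \in V_f$ and $c \in V(C_f)$ with $\dist_{G'[V_f]}(v,c) \le r$ one has
\begin{equation*}
\dist_{G'[V_f]}(v,c) \;=\; \min_{c' \in V(C_f)} \bigl(\dist_{G''_f}(v,c') + \dist_{G'[V_f]}(c',c)\bigr),
\end{equation*}
because any path in $G'[V_f]$ from $v$ must reach a vertex of $C_f$ before it can use an edge not drawn in the disc (such an edge necessarily has both endpoints on $C_f$). Hence the profile of $v$ on $V(C_f)$ in $G'[V_f]$ is still determined by its profile on $V(C_f)$ in $G''_f$, which gives $|\Pi_{r,G'[V_f]}[V_f\to V(C_f)]|\le |\Pi_{r,G''_f}[V(G''_f)\to V(C^*_f)]|$ and restores your chain of inequalities. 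With this correction (and the harmless observation that one may assume $r\ge 1$ before invoking Lemma~\ref{lemma:structural_lemma_best_bound_planar}), your argument is the paper's proof.
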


\begin{proof}
The theorem trivially holds for $r=0$, so we may assume $r \geq 1$ (which is needed below when using Lemma~\ref{lemma:structural_lemma_best_bound_planar}). 
If there is a vertex at distance at least $r+1$ from $A$ in $G$, then we can remove it without decreasing $|\Pi_{r,G}[V(G) \to A]|$.
Thus, we may assume that every vertex of $G$ is at distance at most $r$ from $A$. 
By Lemma~\ref{lemma:reduction_outerface}, there is a plane graph $G'$ whose outerface is bounded by a cycle such that $|\Pi_{r,G}[V(G) \to A]| \leq |\Pi_{r,G'}[V(G') \to A']|$ and $|A'| \leq 2(2r+1)|A|$, where $A'$ is the set of vertices on the outerface of $G'$. 

Our goal is to bound $|\Pi_{r,G'}[V(G') \to A']|$ from above. 
Again, we may assume that every vertex in $G'$ is at distance at most $r$ from $A'$, and thus there is a spanning rooted forest $F$ in $G'$ whose set of roots coincide with $A'$ and with height at most $r$. 
By Lemma~\ref{lemma:structural_lemma_best_bound_planar}, there is a subgraph $H$ of a near triangulation $\widetilde{G}$ obtained from $G'$ by triangulating every inner face of $G'$ such that $H$ has at most $3|A'|/r+1$ inner faces, 
and every inner face of $H$ is bounded by a cycle of length at most $6r$ and contains at most $3$ edges in $E(\widetilde{G})\setminus E(G')$. 

Let $f_1, f_2, \dots, f_q$ be the inner faces of $H$, and for every $i\in [q]$ let  $C_{f_i}$ denote the cycle of $H$ bounding $C_{f_i}$. 
For every $i\in [q]$, let $\widehat{G}_i$ be the union of the cycle $C_{f_i}$ with the subgraph of $G'$ drawn inside the face $f_i$ of $H$.  
(Note that $\widehat{G}_i$ is not necessarily a subgraph of $G'$, since $C_{f_i}$ could contain edges not in $G'$; also, note that in $G'$ the cycle  $C_{f_i}$ could have chords drawn outside $f_i$, which are then not included in $\widehat{G}_i$.)  
We denote by $V_i$ the vertex set of $\widehat{G}_i$.
Also, we let $E_i$ be the set of edges in $C_{f_i}$ that are not in $E(G')$. 
Thus, $|E_i| \leq 3$. 

Let $i\in [q]$. 
For each edge $uv\in E_i$ (if any), replace in $\widehat{G}_i$ the edge $uv$ by a path $P_i$ between $u$ and $v$ of length $r+1$, where the inner vertices are new vertices, and drawn where the edge $uv$ was drawn in $\widetilde{G}$. 
This results in a plane graph $G_i$ whose outerface is bounded by a cycle $C_i$ of length at most $9r$. 
Moreover, one can check that 
for every $v \in V_i$ and every $c \in V(C_{f_i})$, if $\dist_{G'[V_i]}(v,c) \leq r$, then
\[
\dist_{G'[V_i]}(v,c) = \min_{c'\in V(C_{f_i})} (\dist_{G_i}(v,c')+\dist_{G'[V_i]}(c',c)).
\]
(We remark that we do not simply have $\dist_{G'[V_i]}(v,c) = \dist_{G_i}(v,c)$ because in $G'[V_i]$ some chords of $C_{f_i}$ could be drawn outside $f_i$, and those chords are not included in $G_i$.) 
Moreover, if $\dist_{G'[V_i]}(v,c) > r$, then $\dist_{G_i}(v,c) > r$.
Hence
\[
\pi_{r,G'[V_i]}(v,c) = \mathrm{Cap}_r \circ \min_{c'\in V(C_{f_i})} (\pi_{r,G_i}(v,c')+\dist_{G'[V_i]}(c',c)).
\]
We deduce that 
\[
|\Pi_{r,G'[V_i]}[V_i \to V(C_{f_i})]| 
\leq |\Pi_{r,G_i}[V(G_i) \to V(C_{f_i})]| 
\leq |\Pi_{r,G_i}[V(G_i) \to V(C_i)]| 
\in \bigO(r^4), 
\] 
where the rightmost bound comes from  Lemma~\ref{theorem:base_bound_Li_Parter} applied on the pair $G_i,V(C_i)$.  

Applying Lemma~\ref{lemma:covers_and_cuts} with $S_i=V(C_{f_i})$ for every $i\in [q]$, we conclude that
\[
|\Pi_{r,G'}[V(G') \to A]| \leq \sum_{i= 1}^q |\Pi_{r,G[V_i]}[V_i \to V(C_{f_i})]| \in \bigO(r^4 |A'|/r)
\]
and we conclude that $|\Pi_{r,G}[V(G) \to A]| \leq |\Pi_{r,G'}[V(G') \to A'] \in \bigO(r^4|A|)$.
This concludes the proof of the theorem.
\end{proof}

\section{Graphs on surfaces}
\label{section:surfaces}

In this section, we bound the profile complexity of graphs on surfaces. 
We start by recalling some definitions and results about graphs on surfaces. 
We assume some familiarity of the reader with this topic,  see the textbook of Mohar and Thomassen~\cite{mohar_thomassen_graphs_on_surfaces} for background on this topic, and for undefined terms. 
The {\em Euler genus} of the sphere with $g$ handles added is $2g$, and that of the sphere with $g$ crosscaps added is $g$. 
The {\em Euler genus} of a graph $G$ is the minimum Euler genus of a surface in which $G$ can be embedded into. 
A graph $G$ is {\em cellularly embedded} in a surface if every face of the embedding is homeomorphic to an open disk. 
A cycle $C$ in a graph embedded in a surface $\Sigma$ is said to be {\em noncontractible} if $C$ is noncontractible in  $\Sigma$ when seen as a closed curve. 
The cycle $C$ is {\em separating} if removing $C$ from $\Sigma$ separates $\Sigma$ in two pieces, and {\em nonseparating} otherwise. 
The cycle $C$ is {\em $1$-sided} if its neighborhood on the surface is homeomorphic to the Möbius band, and {\em $2$-sided} otherwise, in which case its neighborhood is homeomorphic to the cylinder. 

We need two lemmas about the standard operation of `cutting along' a cycle in a graph embedded in a surface. We refer the reader to~\cite[Section~4.2]{mohar_thomassen_graphs_on_surfaces} for the definition of this operation. 
Here, we limit ourselves to recalling informally the three different situations than can occur when cutting along a cycle $C$: (1) If $C$ is separating, then cutting along $C$ produces two graphs, each having a copy of the cycle $C$. 
(2) If $C$ is nonseparating and $2$-sided, then cutting along $C$ produces one graph, with two vertex-disjoint cycles $C_1$ and $C_2$ that are copies of $C$. 
(3) If $C$ is nonseparating and $1$-sided, then cutting along $C$ produces one graph, with one cycle $C_1$ corresponding to $C$ but having twice the length of $C$. 

\begin{lemma}[{\cite[Proposition 4.2.1]{mohar_thomassen_graphs_on_surfaces}}]
\label{lemma:separating_cycle}
If $G$ is a graph cellularly embedded in a surface of Euler genus $g$, and $C$ is a noncontractible separating cycle $C$, then cutting along $C$ gives two graphs $G_1$ and $G_2$ of Euler genera $g_1$ and $g_2$, respectively, such that $g_1< g$, $g_2< g$, and $g_1 + g_2=g$. 
\end{lemma}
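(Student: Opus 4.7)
The plan is to prove this via Euler's formula applied to the two pieces after capping their boundary circles with disks. Since the lemma is cited from Mohar--Thomassen as a standard fact, I would present a short self-contained proof along classical lines.

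First I would set up notation. Let $v, e, f$ denote the number of vertices, edges, and faces of the cellular embedding of $G$ in $\Sigma$, so Euler's formula gives $v - e + f = 2 - g$. Write $k = |V(C)| = |E(C)|$. When we cut along $C$, each vertex and each edge of $C$ is duplicated (one copy for each ``side'' of $C$), so the two resulting graphs $G_1, G_2$ (embedded in surfaces $\Sigma_1, \Sigma_2$ with one boundary circle each) satisfy $v_1 + v_2 = v + k$ and $e_1 + e_2 = e + k$. To obtain closed surfaces $\widehat{\Sigma}_1, \widehat{\Sigma}_2$ of Euler genera $g_1, g_2$ in which $G_1, G_2$ are cellularly embedded, I glue a disk onto each boundary circle; this adds exactly one new face to each $G_i$, so $f_1 + f_2 = f + 2$.

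Second, I would add up the two instances of Euler's formula. We get
\[
(2 - g_1) + (2 - g_2) \;=\; (v_1 - e_1 + f_1) + (v_2 - e_2 + f_2) \;=\; (v + k) - (e + k) + (f + 2) \;=\; 2 - g + 2,
\]
which rearranges to $g_1 + g_2 = g$. This is the additivity claim.

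Third, I would verify the strict inequalities $g_i < g$. The key observation is that if $g_i = 0$, then $\widehat{\Sigma}_i$ is the sphere, so the disk that was capped onto $\Sigma_i$ is bounded in $\widehat{\Sigma}_i$ by a simple closed curve on a sphere; removing its interior gives $\Sigma_i$ as a disk, meaning that $C$ bounds a disk on that side of $\Sigma$, contradicting the assumption that $C$ is noncontractible. Hence $g_1 \geq 1$ and $g_2 \geq 1$, and combined with $g_1 + g_2 = g$ this yields $g_1 \leq g - 1 < g$ and $g_2 \leq g - 1 < g$.

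There is no real obstacle here; the only subtle point is making the cutting-and-capping operation precise so that counting vertices, edges, and faces is unambiguous, and justifying that a genus-$0$ side would force $C$ to bound a disk on that side (which is exactly the definition of contractibility on a surface). Since these are textbook facts about surface topology, citing Mohar--Thomassen for the cutting construction itself would suffice.
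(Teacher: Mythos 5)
Your proof is correct, and it is the standard Euler--characteristic argument; the paper itself gives no proof of this lemma but simply cites \cite[Proposition 4.2.1]{mohar_thomassen_graphs_on_surfaces}, whose proof proceeds exactly as you describe (cut, cap with disks, sum the two Euler formulas, and use noncontractibility to rule out a genus-$0$ side). The only caveat worth recording is that your computation yields the Euler genera of the capped surfaces $\widehat{\Sigma}_1,\widehat{\Sigma}_2$; the induced cellular embeddings of $G_1,G_2$ in these surfaces need not be genus-minimal, so for the graphs themselves one strictly gets $g_1+g_2\leq g$ with $g_i<g$ --- which is the form actually used in the genus induction of Theorem~\ref{theorem:bounded_genus}, so nothing is lost.
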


\begin{lemma}[{\cite[Proposition 4.2.4]{mohar_thomassen_graphs_on_surfaces}}]
\label{lemma:nonseparating_cycle}
If $G$ is a graph cellularly embedded in a surface of Euler genus $g$, and $C$ is a noncontractible nonseparating cycle $C$, then cutting along $C$ gives one graph $\Tilde{G}$ with Euler genus $\Tilde{g} < g$. 
\end{lemma}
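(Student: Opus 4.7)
The plan is to carry out the topological operation of \emph{cutting along} $C$ explicitly and track how the Euler characteristic of the embedding surface changes. Since the Euler genus of a surface $\Sigma$ equals $2 - \chi(\Sigma)$, any operation that strictly increases $\chi$ will strictly decrease the Euler genus, so this is really a computation in disguise.

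The first step is to pick a small tubular neighborhood $N$ of $C$ in $\Sigma$. There are exactly two possibilities for the topology of $N$: either $C$ is $2$-sided, in which case $N$ is homeomorphic to an annulus with $C$ as its core, or $C$ is $1$-sided, in which case $N$ is homeomorphic to a Möbius band with $C$ as its core. In both cases $\chi(N)=0$. Next I would check that $\Sigma \setminus \mathrm{int}(N)$ is connected: in the $2$-sided case this is precisely the assumption that $C$ is nonseparating, and in the $1$-sided case it is automatic, since a $1$-sided simple closed curve on a surface is never separating (a transverse loop crossing $C$ exactly once cannot bound).

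The second step is the combinatorial cut-and-cap. If $C$ is $2$-sided, then $\partial N$ consists of two disjoint circles, which appear in $\Sigma \setminus \mathrm{int}(N)$ as two cycles $C_1,C_2$ of length $|C|$; if $C$ is $1$-sided, then $\partial N$ is a single circle, appearing as one cycle $C_1$ of length $2|C|$. In either case, one caps each new boundary circle with a disk (a new face) to obtain a closed surface $\Tilde{\Sigma}$ in which the cut graph $\Tilde{G}$ is cellularly embedded, the cap-disks being the new faces.

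The third step is the Euler characteristic computation. Using additivity along common boundary circles (which have $\chi=0$), one gets $\chi(\Sigma) = \chi(\Sigma \setminus \mathrm{int}(N)) + \chi(N) = \chi(\Sigma \setminus \mathrm{int}(N))$, and capping $k \in \{1,2\}$ boundary circles with disks adds $k$ to $\chi$. Hence $\chi(\Tilde{\Sigma}) = \chi(\Sigma) + k$, and therefore $\Tilde{g} = g - k < g$. The main technical point to be careful about is ensuring that the cap-disks really do extend the graph embedding into a cellular one on $\Tilde{\Sigma}$; this is essentially built into the construction, since each new face is bounded by the newly created cycle $C_1$ (or by $C_1$ and $C_2$), but it is worth stating explicitly because later applications of this lemma in Section~\ref{section:surfaces} will need a cellular embedding of $\Tilde{G}$ to recurse on.
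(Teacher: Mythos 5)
Your proof is correct. The paper does not prove this lemma itself---it cites it as Proposition 4.2.4 of Mohar and Thomassen---and your argument is exactly the standard one from that source: cut out a tubular neighborhood of $C$ (annulus or M\"obius band, both of Euler characteristic $0$), note that the complement is connected (by the nonseparating hypothesis in the $2$-sided case, and automatically in the $1$-sided case), cap the $k\in\{1,2\}$ boundary circles with disks, and conclude $\chi(\Tilde{\Sigma})=\chi(\Sigma)+k$, i.e.\ $\Tilde{g}=g-k<g$. The two points you flag---that a $1$-sided simple closed curve never separates, and that the capped embedding remains cellular because the old faces are untouched and each new face is an open disk bounded by $C_1$ (respectively by $C_1$, and by $C_2$)---are handled correctly, so there is nothing to add.
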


A {\em geodesic} in a graph $G$ is a path in $G$ that is shortest among all paths connecting its two endpoints. 
We will need the following well-known lemma, which follows from Proposition 4.3.1(a) in \cite{mohar_thomassen_graphs_on_surfaces}. 

\begin{lemma}[{\cite[Proposition 4.3.1(a)]{mohar_thomassen_graphs_on_surfaces}}]
\label{lemma:noncontractible_cycle_formed_of_geodesics}
If $G$ is a graph cellularly embedded in a surface of Euler genus $g>0$, then there exists
a noncontractible cycle $C$ that is the union of two geodesics in $G$ having the same endpoints. 
\end{lemma}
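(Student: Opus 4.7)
My plan is to prove the lemma by taking $C$ to be a shortest noncontractible cycle in $G$---such a cycle exists because $g>0$---and then showing that $C$ naturally decomposes into two geodesics sharing their endpoints. Fix an arbitrary vertex $u \in V(C)$ and let $v \in V(C)$ be a vertex at distance $\lfloor \ell(C)/2 \rfloor$ from $u$ along $C$. Then $C$ is the union of two internally disjoint $(u,v)$-paths $P_1$ and $P_2$, with $\ell(P_1) \leq \ell(P_2) \leq \ell(P_1)+1$, and it suffices to show that both $P_1$ and $P_2$ are geodesics in $G$.

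I would proceed by contradiction. Suppose, say, that $P_1$ is not a geodesic, and let $Q$ be a shortest $(u,v)$-path in $G$, so $\ell(Q) \leq \ell(P_1)-1 < \ell(P_2)$. Consider the two closed walks $W_1 := P_1 \cdot Q^{-1}$ and $W_2 := Q \cdot P_2^{-1}$, both viewed as loops based at $u$ in the host surface. Their concatenation $W_1 \cdot W_2 = P_1 \cdot Q^{-1} \cdot Q \cdot P_2^{-1}$ is freely homotopic to $P_1 \cdot P_2^{-1} = C$, so since $C$ is noncontractible at least one of $W_1, W_2$ must be noncontractible. Their lengths satisfy $\ell(W_1) = \ell(P_1) + \ell(Q) < 2\ell(P_1) \leq \ell(C)$ and $\ell(W_2) = \ell(Q) + \ell(P_2) < \ell(P_1) + \ell(P_2) = \ell(C)$, so in either case we obtain a noncontractible closed walk strictly shorter than $C$.

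To finish, I would invoke the following standard shortcutting fact: every noncontractible closed walk $W$ in a graph embedded in a surface contains a noncontractible cycle of length at most $\ell(W)$. This is proved by induction on $\ell(W)$: if $W$ is not already a simple cycle, split $W$ at a repeated vertex into two shorter closed walks whose homotopy classes multiply to that of $W$, so at least one of them remains noncontractible, and apply induction. Applying this to whichever of $W_1, W_2$ is noncontractible yields a noncontractible cycle in $G$ of length strictly less than $\ell(C)$, contradicting the minimality of $C$. Hence $P_1$ and $P_2$ are both geodesics, and $C = P_1 \cup P_2$ is the desired cycle. The main obstacle I anticipate is formalizing the homotopy manipulation cleanly, in particular justifying that the class of $C$ factors as the product of the classes of $W_1$ and $W_2$ so that at least one factor is nontrivial; once this is handled, the antipodal choice of $v$ and the shortcutting induction are essentially bookkeeping.
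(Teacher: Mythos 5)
The paper does not actually prove this lemma (it cites Proposition~4.3.1(a) of Mohar--Thomassen), so your argument has to stand on its own. Its overall shape --- shortest noncontractible cycle, the homotopy-product (``3-path'') argument showing that one of $W_1,W_2$ is noncontractible, and the shortcutting of a noncontractible closed walk to a noncontractible cycle --- is the standard route and those steps are fine. The gap is in ``Suppose, say, that $P_1$ is not a geodesic'': the two cases are \emph{not} symmetric, and the one you skip is exactly the one that cannot be handled. If $P_1$ is a geodesic but $P_2$ is not, then $\ell(Q)=\dist_G(u,v)=\ell(P_1)$, so $\ell(W_2)=\ell(Q)+\ell(P_2)=\ell(C)$ and $\ell(W_1)=2\ell(P_1)$, which equals $\ell(C)$ when $\ell(C)$ is even and $\ell(C)-1$ when it is odd. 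If the noncontractible walk among $W_1,W_2$ happens to be $W_2$, you get no cycle shorter than $C$ and no contradiction. Moreover this bad case is unavoidable whenever the shortest noncontractible cycle has odd length: then $\ell(P_2)=\ell(P_1)+1>\dist_G(u,v)$, so $P_2$ is simply not a geodesic, and no choice of $u,v$ on $C$ fixes this, since two geodesics with the same endpoints have equal length and their union has even length.

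In fact the statement you are asked to prove is false as literally written: $K_6$ triangulates the projective plane (a cellular embedding in a surface of Euler genus $1$) and, being nonplanar, has noncontractible cycles there; but every geodesic in $K_6$ is a single edge, so a cycle that is the union of two geodesics with common endpoints would have length $2$, which is impossible in a simple graph. What your argument does correctly establish is that the shorter arc $P_1$ is a geodesic, hence that there is a noncontractible cycle of length at most $2\dist_G(u,v)+1$ --- i.e.\ the union of two geodesics with common endpoints plus at most one extra edge. That is the version Proposition~4.3.1(a) actually delivers (there via fundamental cycles of a BFS tree) and the version needed in the application in Section~\ref{section:surfaces}; if you restate the lemma with that extra ``$+1$'', your proof goes through once you make the case analysis explicit.
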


We may now bound the number of profiles for graphs embedded in a fixed surface.

\begin{theorem}\label{theorem:bounded_genus}
Let $h:\mathbb{N} \to \mathbb{N}$ be a function such that $|\Pi_{r,G}[V(G) \to A]| \leq h(r)|A|$ for every planar graph $G$, every nonempty set $A$ of vertices of $G$, and every nonnegative integer $r$. 
Then, there exists a function $f:\mathbb{N} \to \mathbb{N}$ such that for every nonnegative integer $g$, every graph $G$ of Euler genus $g$, every nonempty set $A$ of vertices of $G$, and every nonnegative integer $r$,  
\[
|\Pi_{r,G}[V(G) \to A]| \leq f(g)h(r)(r+|A|);
\]
in particular, 
\[
|\Pi_{r,G}[V(G) \to A]| \in \bigO_g(r^4(r+|A|))
\]
using the bound $h(r)\in \bigO(r^4)$ from Theorem~\ref{theorem:NC_planar_degree_4}. 
\end{theorem}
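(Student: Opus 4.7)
The plan is to proceed by induction on the Euler genus $g$. The base case $g = 0$ is immediate: $G$ is planar, so the hypothesis yields $|\Pi_{r,G}[V(G) \to A]| \leq h(r)|A| \leq h(r)(r+|A|)$, and I set $f(0)=1$. For the inductive step $g \geq 1$, I may assume that $G$ is connected and cellularly embedded in a surface of Euler genus exactly $g$, since a non-cellular embedding sits in a surface of smaller Euler genus (and disconnected components are handled one at a time, the bounds summing appropriately). I may also discard every vertex at distance more than $r$ from $A$ without changing the profile count, so every vertex of $G$ lies within distance $r$ of $A$.

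Next, by Lemma~\ref{lemma:noncontractible_cycle_formed_of_geodesics}, there exists a noncontractible cycle $C = P_1 \cup P_2$ that is the union of two geodesics with common endpoints. I cut $G$ along $C$ using Lemma~\ref{lemma:separating_cycle} (if $C$ is separating) or Lemma~\ref{lemma:nonseparating_cycle} (otherwise), obtaining a graph $\Tilde G$ whose connected components have Euler genera summing to a value strictly less than $g$. The cycle $C$ is replaced by one or two copies $\Tilde C$; each vertex of $V(G) \setminus V(C)$ has a unique lift in $\Tilde G$, and every $(v,a)$-path in $G$ either avoids $V(C)$ (and then lifts uniquely to a path in a single component of $\Tilde G$) or passes through some $c \in V(C) \cap N^r_G[v]$.

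The central observation is that, since each $P_i$ is a geodesic, the inequality $|k-j| = \dist_G(v_j,v_k) \leq \dist_G(v_j,v)+\dist_G(v,v_k) \leq 2r$ forces the shadow $N^r_G[v] \cap V(P_i)$ to consist of vertices of $P_i$ whose indices lie in an interval of length at most $2r$, whence $|N^r_G[v] \cap V(C)| \leq 4r+2$. As a consequence, $\pi_{r,G}[v \to a]$ is determined by (i) the profile of a lift of $v$ on the image $\Tilde A$ of $A$ in $\Tilde G$ (which accounts for $(v,a)$-paths avoiding $V(C)$ and can be bounded by the induction hypothesis applied to $\Tilde G$), together with (ii) the values $\dist_G(v,c)$ for $c$ in the short shadow $N^r_G[v] \cap V(C)$, which, combined with the fixed constants $\dist_G(c,a)$, handle paths crossing $V(C)$.

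The main obstacle, and the key technical step, is packaging this shadow information into an inductive bound without incurring a cost depending on $|V(C)|$ (which is uncontrolled). My plan is to cover each geodesic $P_i$ by overlapping intervals of length $\Theta(r)$ stepped by $\Theta(r)$, and then to use the resulting intervals as ``guards'' in the spirit of Lemma~\ref{lem:guarding_family_imply_small_NC} or Lemma~\ref{lemma:covers_and_cuts}, so that every vertex's shadow is confined to a single interval pair of total size $\bigO(r)$. Applying the inductive hypothesis to $\Tilde G$ with $\Tilde A$ augmented by the $\bigO(r)$ vertices of the interval pair, and summing over the \emph{relevant} pairs, should give the bound---where the crucial point is that, under the reduction ensuring every vertex is within distance $r$ of $A$, an interval can be relevant only if it contains a vertex within distance $2r$ of $A$, so the total count of relevant interval pairs is $\bigO(|A|+r)$ rather than polynomial in $|V(C)|$. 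Iterating across the at-most-$g$ cuts then yields $|\Pi_{r,G}[V(G)\to A]| \leq f(g)h(r)(r+|A|)$ with $f(g)$ single-exponential in $g$.
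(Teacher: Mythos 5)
There is a genuine gap in your inductive step, and it sits exactly where you locate ``the main obstacle.'' Cutting along an \emph{arbitrary} noncontractible cycle $C$ supplied by Lemma~\ref{lemma:noncontractible_cycle_formed_of_geodesics} cannot be made to work with the claimed bound, because $C$ may be long: even after reducing to the case where every vertex is within distance $r$ of $A$, the two geodesics can have length $\Theta(r|A|)$ (spacing sample vertices $4r$ apart along a geodesic and assigning each to a nearby element of $A$ gives $|V(C)| = \bigO(r|A|)$, and nothing better in general). Your interval device correctly confines each shadow $N^r_G[v]\cap V(C)$ to an interval pair of size $\bigO(r)$, and the number of realized pairs is indeed $\bigO(|A|)$; the problem is the cost \emph{per pair}. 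To recover $\pi_{r,G}[v\to A]$ for $v$ in the piece $V_{I,J}$ you need not only the profile of the lift $\tilde v$ on $\tilde I\cup \tilde J$ in $\Tilde{G}$ (a set of size $\bigO(r)$, fine) but also the profile of $\tilde v$ on all of $\tilde A$, to account for $(v,a)$-paths avoiding $C$. Hence each summand in $\sum_{(I,J)}|\Pi_{r,\Tilde{G}}[\tilde V_{I,J}\to \tilde A\cup\tilde I\cup\tilde J]|$ is only controlled by the inductive hypothesis as $f(g-1)h(r)(\bigO(r)+|\tilde A|)=\bigOmega(f(g-1)h(r)|A|)$, and summing over $\Theta(|A|)$ pairs gives a bound quadratic in $r+|A|$, which breaks the induction. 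The alternative of invoking the hypothesis once on $\Tilde{G}$ with target set $\tilde A\cup V(\Tilde{C})$ costs $\Theta(r|A|)$ extra vertices, i.e.\ a multiplicative factor $r$ per genus step, yielding only $\bigO_g(r^{4+g}|A|)$ --- still polynomial for fixed $g$, but not the stated $f(g)h(r)(r+|A|)$ and not the $\bigO_g(r^5)$ corollary.

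The paper's proof avoids this via a dichotomy that your plan is missing. If $G$ contains a noncontractible cycle of length $\bigO_g(r)$, it cuts along that one; the copies of the cycle then add only $\bigO_g(r)$ vertices to the target set, so the induction closes with an additive loss. If instead every noncontractible cycle is longer than $4(c_g+1)r$, no cutting is performed at all: Lemma~\ref{lemma:noncontractible_cycle_formed_of_geodesics}, applied in the contrapositive inside any induced subgraph of radius $\bigO_g(r)$, shows such subgraphs are planar, and an $(r,\bigO_g(r),\bigO_g(1))$-sparse cover (Theorem~\ref{theorem:construct_X_minor_closed}) splits the profile count into planar pieces, each charged $h(r)|A\cap X|$ with every element of $A$ charged $\bigO_g(1)$ times. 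In other words, the two-geodesic structure is used to certify local planarity, not to organize a cut along a long cycle. To repair your argument you would need to replace the ``cut along an arbitrary two-geodesic cycle'' step by this short-cycle versus locally-planar case analysis, or by some other mechanism keeping the additive cost of each cut at $\bigO_g(r)$.
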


The proof works as follows: 
While there exists a noncontractible cycle of size $\bigO(r)$, we cut it,
which decreases the genus. Finally, when there is no more such short noncontractible
cycle, the resulting graph is locally planar, and we can apply Theorem~\ref{theorem:NC_planar_degree_4}.

\begin{proof}[Proof of Theorem~\ref{theorem:bounded_genus}]
Graphs of Euler genus $g$ forbid $K_t$ as a minor for some $t \in \bigO(\sqrt{g})$ (see e.g.\ \cite{mohar_thomassen_graphs_on_surfaces}). 
By Theorem~\ref{theorem:construct_X_minor_closed}, there are thus constants $c_g, c'_g$ with $c_g \in \bigO(g)$ such that, for every nonnegative integer $r$, graphs of  Euler genus $g$ have $(r, c_g r, c'_g)$-sparse covers.  
These sparse covers will be used in Case~2 of the proof below. 

We prove the theorem with the function $f(g)$ defined as follows: 
\begin{align*}
f(0) &:=1 \\
f(g) &:= \max\{12(c_g + 1)f(g-1), c'_g\}  \quad \quad \forall g \geq 1. 
\end{align*}

Let $G$ be a graph of Euler genus $g$ and consider an embedding of $G$ in a surface $\Sigma$ of Euler genus $g$. 
We prove the theorem by induction on the Euler genus $g$. 
We may assume $g>0$, since the case $g=0$ follows from Theorem~\ref{theorem:NC_planar_degree_4}. 
Note that $G$ is cellularly embedded in $\Sigma$ (otherwise, one could embed $G$ in a surface of smaller Euler genus).  

\medskip

{\bf Case 1 (induction step): There exists a noncontractible cycle $C$ of length at most $4(c_g+1)r$ in $G$.}
First suppose that $C$ is separating. 
Let $G_1$ and $G_2$ be the two graphs resulting from cutting along $C$, and for $i=1,2$ let $C_i$ denote the copy of $C$ in $G_i$, and let $g_i$ be the Euler genus of $G_i$. 
We also denote by $A_i$ the set $A \cap V(G_i)$ for every $i=1,2$.
Then $g_1<g$ and $g_2<g$ with $g_1+g_2=g$, by Lemma~\ref{lemma:separating_cycle}. 
By Lemma~\ref{lemma:covers_and_cuts} and the induction hypothesis
\[
\begin{split}
|\Pi_{r,G}[V(G) \to A]| 
&\leq |\Pi_{r,G_1}[V(G_1) \to A_1 \cup V(C_1)]| + |\Pi_{r,G_2}[V(G_2) \to A_2 \cup V(C_2)] \\
&\leq f(g-1)h(r)(8(c_g+1)r+|A|) \\
&\leq f(g)h(r)(r+|A|),
\end{split}
\]
as desired. 

Next, assume that $C$ is nonseparating.  
Let $\Tilde{G}$ be the graph obtained by cutting along $C$. 
If $C$ is $2$-sided, let $C_1$ and $C_2$ be the two resulting copies of $C$ in $\Tilde{G}$. 
If $C$ is $1$-sided, let $C_1$ be the cycle in $\Tilde{G}$ corresponding to $C$ (which thus has twice the length of $C$), and define $C_2$ as being empty (to help uniformize the discussion below). 
By Lemma~\ref{lemma:nonseparating_cycle}, $\Tilde{G}$ has Euler genus at most $g-1$. 

For every vertex $w \in V(C)$, we denote by $w_1,w_2$ the two corresponding vertices in $\Tilde{G}$ (one in $C_1$ and another in $C_2$ if $C$ is $2$-sided, or the two in $C_1$ if $C$ is $1$-sided). 

Let $A':=(A\setminus V(C)) \cup V(C_1) \cup V(C_2)$. 
We prove the following:
\begin{equation}\label{eq:boundG'}
|\Pi_{r,G}[V(G) \to A]| \leq |\Pi_{r,\Tilde{G}}[V(\Tilde{G}) \to  A']| + |V(C)|
\end{equation}
To see this, let $v \in V(G) \setminus V(C)$ and $a \in A$. 
If $a \not\in V(C)$ then
\[
\dist_{G}(v,a) = \min\left( 
\min_{w \in V(C)}
\min_{i=1,2}(\dist_{\Tilde{G}}(v,w_i) + \dist_{G}(w,a)),
\dist_{\Tilde{G}}(v,a)\right)
\]
and as a consequence
\[
\pi_{r,G}[v\to a] = \mathrm{Cap}_r \circ \min\left( \min_{w \in V(C)}
\min_{i=1,2}(\pi_{r,\Tilde{G}}[v \to w_i] +
\dist_{G}(w,a)), \pi_{r,\Tilde{G}}[v \to a]\right).
\]
Similarly, if $a \in V(C)$ then
\[
\pi_{r,G}[v\to a] = \mathrm{Cap}_r \circ 
\min_{w \in V(C)}
\min_{i=1,2}(\pi_{r,\Tilde{G}}[v \to w_i] + \dist_{G}(w,a)).
\]
Finally, note that we trivially have $|\Pi_{r,G}[V(C) \to A]| \leq |V(C)|$. 

Combining the above observations, we obtain that $|\Pi_{r,G}[V(G) \to A]| \leq  |\Pi_{r,\Tilde{G}}[V(\Tilde{G}) \to A']| + |V(C)|$, which proves \eqref{eq:boundG'}.
Using the induction hypothesis on $\Tilde{G}$, we deduce that
\[
\begin{split}
|\Pi_{r,G}[V(G) \to A]|
&\leq |\Pi_{r,\Tilde{G}}[V(\Tilde{G}) \to A']| + |V(C)|  \\
&\leq f(g-1)h(r)(8(c_g+1)r+|A|) + 4(c_g+1)r \\
&\leq f(g)h(r)(r+|A|).
\end{split}
\]

\medskip

{\bf Case 2 (base case): Every noncontractible cycle in $G$ has length more than $4(c_g+1)r$.} 
Observe that in every induced subgraph $H$ of $G$ with radius at 
most $(c_g+1)r$, geodesics have length at most $2(c_g+1)r$, thus by
Lemma~\ref{lemma:noncontractible_cycle_formed_of_geodesics}, $H$ has no noncontractible
cycle, and hence $H$ is planar. 

Now apply Theorem~\ref{theorem:construct_X_minor_closed} to $G$ to obtain a $(r, c_g r, c'_g)$-sparse cover $\mathcal{X} \subseteq \mathcal{P}(V(G))$ of $G$.  

Let $X \in \mathcal{X}$ and let $v \in V(G)$ be such that every vertex of $X$ is at distance at most $c_g r$ from $v$ in $G[X]$. 
Observe that every vertex at distance at most $r$ from $A\cap X$ in $G$ is at distance at most $(c_g+1) r$ from $v$ in $G$. 
It follows that 
\[
\Pi_{r,G}[V(G) \to A\cap X] = \Pi_{r,G[N^{(c_g+1)r}[v]]}[N^{(c_g+1)r}[v] \to A\cap X].
\]
But as $G[N^{(c_g+1)r}[v]]$ is an induced subgraph of $G$ with radius at most $(c_g+1)r$,
it is planar. Thus, 
\[
|\Pi_{r,G}[V(G) \to  A\cap X]| \leq h(r)|A\cap X|.
\]
Summing up this inequality for all $X \in \mathcal{X}$, we obtain 
\[
\begin{split}
|\Pi_{r,G}[V(G) \to A]| 
&\leq \sum_{X \in \mathcal{X}, A \cap X \neq \emptyset} 
    h(r)|A \cap X| \\
&\leq h(r)\sum_{X \in \mathcal{X}, A \cap X \neq \emptyset}|A \cap X|\\
&\leq c'_g h(r)|A| \\
&\leq f(g)h(r)(r+|A|).
\end{split}
\]
This concludes the proof of Theorem~\ref{theorem:bounded_genus}.
\end{proof}

Using that $r^4(r+|A|) \leq r^5|A|$, we obtain the following corollary from Theorem~\ref{theorem:bounded_genus}. 

\begin{corollary}    \label{cor:neighborhood_complexity_surfaces} 
    For every nonnegative integer $g$, the profile complexity of graphs of Euler genus $g$ is in $\bigO_g(r^5)$. 
\end{corollary}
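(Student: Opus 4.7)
The plan is to derive the corollary as a direct consequence of Theorem~\ref{theorem:bounded_genus}, which already does all the heavy lifting (cutting along short noncontractible cycles to reduce the Euler genus and, in the base case, exploiting sparse covers of graphs on surfaces together with the planar bound from Theorem~\ref{theorem:NC_planar_degree_4}). The only remaining task is to massage the bound supplied by Theorem~\ref{theorem:bounded_genus} into the form required by the definition of profile complexity, that is, a bound of the shape $f(r)|A|$ rather than $f(r)(r+|A|)$.

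First I would invoke Theorem~\ref{theorem:bounded_genus} with the planar profile complexity bound $h(r) \in \bigO(r^4)$ from Theorem~\ref{theorem:NC_planar_degree_4}. This yields a function $f:\mathbb{N}\to\mathbb{N}$ such that for every graph $G$ of Euler genus $g$, every nonempty $A\subseteq V(G)$, and every $r\geq 0$,
\[
|\Pi_{r,G}[V(G)\to A]| \leq f(g)\cdot h(r)\cdot (r+|A|).
\]
Second, I would absorb the additive $r$ inside $(r+|A|)$ into a multiplicative factor by using that $A$ is nonempty, so $|A|\geq 1$ and hence $r+|A|\leq (r+1)|A|$. Substituting this back gives
\[
|\Pi_{r,G}[V(G)\to A]| \leq f(g)\cdot h(r)\cdot (r+1)\cdot |A| \in \bigO_g(r^5)\cdot |A|,
\]
which is exactly the claim that graphs of Euler genus $g$ have profile complexity in $\bigO_g(r^5)$.

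There is no genuine obstacle to overcome: the conceptual work has been done in Theorem~\ref{theorem:bounded_genus}, and the only subtle point is the elementary inequality $r+|A|\leq (r+1)|A|$, which is valid precisely because the definition of profile complexity restricts to nonempty sets $A$. One could equally well handle $r=0$ separately (where the bound $|\Pi_{0,G}[V(G)\to A]|\leq |A|$ is immediate) and use $r+|A|\leq 2r|A|$ for $r\geq 1$; either route gives the desired $\bigO_g(r^5)$ factor with a slightly different constant.
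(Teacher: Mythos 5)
Your proposal is correct and matches the paper's own derivation, which likewise obtains the corollary from Theorem~\ref{theorem:bounded_genus} by absorbing the additive $r$ via an inequality of the form $r^4(r+|A|)\leq r^5|A|$. Your variant $r+|A|\leq(r+1)|A|$ is if anything slightly cleaner, since it holds for all $r\geq 0$ and $|A|\geq 1$.
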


\section{Lower bounds}\label{section:lower_bounds}

So far we have seen that classes of graphs excluding a minor have polynomial
profile and neighborhood complexity. In this section, we give a lower bound on the degree
of this polynomial for graphs of treewidth at most $t$, and show that
$1$-planar graphs, which are some of the simplest graphs excluding no minor,
have super polynomial profile and neighborhood complexity.

\subsection{Bounded treewidth}

Let $t,r$ be positive integers with $r$ even. 
Consider the graph formed by an independent set $A = \{a_1, \dots, a_t\}$,
and for every $x=(x_1, \dots, x_t) \in [r/2,r]^t$, add a new vertex $v_x$
and link it to $a_i$ via a path of length $x_i$, for every $i \in [t]$.
One can easily check that $G$ has treewidth at most $t$ and that
the profile of $v_x$ on $A$ is $x$ for every $x \in [r/2,r]^t$.
This simple construction gives an example of a graph with treewidth $t$ and 
with a number of profiles that is at least $(r/2)^t = \bigOmega_t(r^t|A|)$.\footnote{Note that if one wants to have $|A| \gg t$ then it suffices to take many disjoint copies of this construction.}
The following theorem improves this lower bound to $\bigOmega_t(r^{t+1}|A|)$.

\begin{theorem}\label{theorem:construction_bounded_treewidth}
For every positive integers $t,r$ such that $2^t(t+1)$ divides $r$, there exists a graph $G$ of treewidth at most
$t$ and a set $A \subseteq V(G)$ such that
\[
|\Pi_{r,G}[V(G) \to A]| \geq \tfrac{1}{(2(t+1))^{t+1}} \cdot r^{t+1}|A| = \bigOmega_t(r^{t+1}|A|^{t+1}).
\]
\end{theorem}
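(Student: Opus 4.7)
The plan is to construct, for every $t \geq 1$ and every $r$ with $2^t(t+1)\mid r$, a ``gadget'' graph $\mathcal{H}_t=\mathcal{H}_t(a_0,\ldots,a_t)$ with $t+1$ designated \emph{anchor} vertices, of treewidth at most $t$, containing $\bigOmega_t(r^{t+1})$ vertices whose profiles on the anchor set are pairwise distinct. Taking a disjoint union of $N$ vertex-disjoint copies of $\mathcal{H}_t$ and letting $A$ be the union of their anchor sets would then yield a graph of treewidth at most $t$ with $|A|=N(t+1)$ and $|\Pi_{r,G}[V(G)\to A]|=\bigOmega_t(r^{t+1}|A|)$, as required.

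For the base case $t=1$, I take a path from $a_0$ to $a_1$ of appropriate length and, at each internal vertex $u$, attach a pendant path of every length $\ell$ in a suitable interval. Since the map $(u,\ell)\mapsto (\dist(u,a_0)+\ell,\dist(u,a_1)+\ell)$ is injective on its range, the pendant endpoints realize $\bigOmega(r^{2})$ distinct profiles on $\{a_0,a_1\}$, and the graph is a tree.

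For the inductive step, $\mathcal{H}_t(a_0,\ldots,a_t)$ is obtained by taking a spine path $a_0=u_0,u_1,\ldots,u_n=a_1$ and, at each internal vertex $u_k$, attaching a fresh copy $\mathcal{H}_{t-1}^{(k)}$ of $\mathcal{H}_{t-1}(u_k,a_2,\ldots,a_t)$; different copies are vertex-disjoint apart from the shared ``global'' anchors $a_2,\ldots,a_t$. To control the treewidth I would prove inductively the strengthened statement that $\mathcal{H}_t$ admits a tree decomposition in which every bag consists of $\{a_2,\ldots,a_t\}$ together with two further vertices (so of size at most $t+1$). This is preserved by using main bags $B_k=\{u_{k-1},u_k,a_2,\ldots,a_t\}$ along the spine and attaching, at each $B_k$, the sub-gadget's tree decomposition after extending each of its bags by the single vertex $a_2$; by the inductive strengthened statement, those sub-gadget bags already contain $\{a_3,\ldots,a_t\}$ plus two more vertices, so inserting $a_2$ keeps them of size at most $t+1$, and the connectivity of each anchor's subtree of bags is easily checked.

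The main obstacle is showing that the profiles at the recursively-defined test vertices are pairwise distinct. For a test vertex $w$ lying inside $\mathcal{H}_{t-1}^{(k)}$, the desired identities are $\dist(w,a_0)=k+\dist_{\mathcal{H}_{t-1}^{(k)}}(w,u_k)$, $\dist(w,a_1)=(n-k)+\dist_{\mathcal{H}_{t-1}^{(k)}}(w,u_k)$, and $\dist(w,a_j)=\dist_{\mathcal{H}_{t-1}^{(k)}}(w,a_j)$ for $j\geq 2$. Together with the inductive hypothesis applied to the sub-gadget, these identities would let one recover the pair $(k,\dist_{\mathcal{H}_{t-1}^{(k)}}(w,u_k))$ from the first two coordinates (via sum and difference) and then pinpoint $w$ inside its sub-gadget from the remaining coordinates. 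The first two identities could fail only through a shortcut going from $w$ to some shared anchor $a_j$ ($j\geq 2$) and then back to the spine via a different sub-gadget $\mathcal{H}_{t-1}^{(k')}$. To rule this out, I would restrict the admissible spine positions $k$ to a middle sub-interval and impose correspondingly smaller scales on the internal distances used inside the sub-gadgets; such a restriction is imposed at each of the $t+1$ levels of the recursion, and the resulting loss of a factor of roughly $1/(2(t+1))$ per level accounts for both the denominator $(2(t+1))^{t+1}$ and the divisibility condition $2^t(t+1)\mid r$ in the statement. Once the distance identities are established, counting gives $\bigOmega(r)$ valid spine positions times $\bigOmega_t(r^t)$ test vertices per sub-gadget, for a total of $\bigOmega_t(r^{t+1})$ distinct profiles, completing the argument.
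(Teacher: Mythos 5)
Your construction is essentially the paper's, just unrolled recursively: the paper takes the tree on $[0,\ell-1]^{t+1}$ with $\ell = r/(2(t+1))$, rooted at $\mathbf{0}$ with parent map ``decrement the last nonzero coordinate'', which is precisely your spine-of-spines (coordinate $0$ is the main spine, coordinate $i$ indexes the level-$i$ sub-spines, the last coordinate plays the role of your pendant lengths), and each anchor $a_i$ is a hub attached to the far ends of all level-$(i-1)$ spines; injectivity of the profile map is then read off from an explicit linear system in $(x_0,\dots,x_t)$. The treewidth arguments also coincide: deleting the $t-1$ shared anchors leaves a tree, which is the same as adding them to every bag of a width-$1$ decomposition. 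The only substantive difference is at your self-identified ``main obstacle'': instead of calibrating a separate scale at each of the $t+1$ levels to exclude shortcuts through the shared anchors, the paper attaches each $a_i$ ($i\ge 1$) by paths of length exactly $r/2$ while keeping all distances within the tree below $r/2$, so any walk passing through some anchor costs at least $r$ and no shortcut analysis is needed. Since that multi-scale calibration is the one step you have not actually carried out (and it is where the constant $(2(t+1))^{-(t+1)}$ must be extracted), you would do well to adopt the uniform $r/2$ padding instead.
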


Graphs of treewidth at most $2$ are planar. 
Hence, there are planar graphs whose numbers of profiles are at least $\bigOmega(r^3|A|)$. 
This is illustrated on Figure~\ref{fig:construction_bounded_treewidth}.

\begin{proof}[Proof of Theorem~\ref{theorem:construction_bounded_treewidth}] 
Let $\ell = \frac{r}{2(t+1)}$. We define a tree $T$ as follows:
\begin{itemize}
    \item $V(T) = [0,\ell-1]^{t+1}$,
    \item $\mathbf{0} \in V(T)$ is the root of $T$, and
    \item the parent of $x_0  \dots x_i 0 \dots 0$ with $x_i \neq 0$
        is $x_0 \dots (x_i - 1)0 \dots 0$.
\end{itemize}

Set $A = \{a_0, a_1, \dots, a_t\}$ with $a_0=0 \in V(T)$, and $a_i \not\in V(T)$
for every $i \in [t]$ (that is, $a_1, \dots, a_t$ are new vertices not in $T$). 
Finally, for every $i \in [t]$, for every $x=(x_0, \dots, x_{i-1}) \in [0,\ell-1]^i$,
link $a_i$ to $x_0 \dots x_{i-2} (\ell-1)0\dots 0$ via a path of length $r/2$. 
This gives the desired graph $G$, see Figure~\ref{fig:construction_bounded_treewidth}.

Observe that $G - \{a_2, \dots, a_t\}$ is a tree, so
$G$ has treewidth at most $1+(t-1)=t$ as wanted.

Let $x=x_0 \dots x_t \in V(T)$, $d_0 = \dist_G(x,a_0)$ and
$d_i = \dist_G(x,a_i) - r/2$ for every $i \in \{1, \dots, t\}$.
One can easily check the following relations:
\begin{equation}\label{eq:system_construction}
\left\{
\begin{array}{r c l l}
d_0 &=& \sum_{i=0}^t x_i \leq r/2 & \\
d_i &=&  (\ell-1-x_{i-1}) + \sum_{j=i}^t x_j \leq r/2, &\forall i \geq 1 \\
\end{array}
\right.
\end{equation}
In particular, $\dist_G(x,a_i) \leq r$ for every $x \in V(T)$ and
$i\in\{0,\dots,t\}$.

We claim that this system is injective, that is, it has at most one solution
$x_0, \dots, x_{t}$ when $d_0, \dots, d_t$ is fixed.
Indeed, we can recover $x_0, \dots,x_t$ inductively using the following
relations:
\[
\begin{array}{r c l l}
    x_{i+1} &=& \frac{1}{2}(\ell -1 - d_{i+2} +d_0-x_0-\dots-x_i) & \quad \textrm{ for } i \in [-1,t-1]  \\
\end{array}
\]
As a consequence, $|\Pi_{r,G}[V(G) \to A]| \geq |V(T)| = \ell^{t+1}
= \tfrac{1}{(2(t+1))^{t+1}} \cdot r^{t+1}$, and the result follows as $|A| = t+1$.
\end{proof}

Observe that this construction is tight for $A$ of constant size.
Indeed, Corollary~\ref{corollary:bound_polynomial_in_A_profiles} implies
that if $G$ has treewidth at most $t$, then
$|\Pi_{r,G}[V(G) \to A]| = \bigO(r^{t+1}|A|^{t+1})$
(because $G$ is then $K_{t+2}$-minor-free).

In terms of neighborhood complexity, we deduce the following corollary from the above theorem using
Lemma~\ref{lemma:from_balls_to_profiles}.

\begin{corollary}
Let $t$ be a positive integer. 
Then, for every nonnegative integer $r$ such that $2^t(t+1)$ divides $r$, there exists a graph $G$ of treewidth at most
$t$ and $A \subseteq V(G)$ such that
\[
|\{A \cap N^r[v] \mid v \in V(G)\}| 
\geq \tfrac{1}{2(2(t+1))^{t+1}} \cdot r^{t}|A|
= \bigOmega_t(r^t |A|).
\]
\end{corollary}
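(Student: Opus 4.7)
The plan is to derive this neighborhood-complexity lower bound from the profile-complexity lower bound of Theorem~\ref{theorem:construction_bounded_treewidth} by reusing the construction from the proof of Lemma~\ref{lemma:from_balls_to_profiles} in reverse. The key observation is that the padding argument in that proof gives an injection from profiles on $A$ in $G$ into ball-intersections on an enlarged set $A'$ in a slightly larger graph $G'$.

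First, I would apply Theorem~\ref{theorem:construction_bounded_treewidth} to obtain a graph $G$ of treewidth at most $t$ and a nonempty subset $A\subseteq V(G)$ such that
\[
|\Pi_{r,G}[V(G) \to A]| \;\geq\; \tfrac{1}{(2(t+1))^{t+1}} \, r^{t+1}\,|A|.
\]
Second, following the construction in Lemma~\ref{lemma:from_balls_to_profiles}, I would build $G'$ from $G$ by attaching to each vertex $a\in A$ a fresh path $a_0a_1\cdots a_r$ with $a_r=a$, and set $A'=\{a_i : a\in A,\, 0\le i\le r\}$. Then $|A'|=(r+1)|A|$, and since attaching pendant paths to a graph never raises its treewidth above $\max(t,1)$, the graph $G'$ still has treewidth at most $t$ (the case $t=0$ is trivial because then the claimed bound is $0$).

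Third, I would invoke the central claim proved inside Lemma~\ref{lemma:from_balls_to_profiles}: for all $u,v\in V(G)$, if $N^r_{G'}[u]\cap A'=N^r_{G'}[v]\cap A'$ then $\pi_{r,G}[u\to A]=\pi_{r,G}[v\to A]$. This provides a well-defined surjection from ball-intersections $N^r_{G'}[v]\cap A'$ (for $v\in V(G)$) onto profiles $\pi_{r,G}[v\to A]$, hence
\[
\big|\{N^r_{G'}[v]\cap A' : v\in V(G')\}\big| \;\geq\; |\Pi_{r,G}[V(G)\to A]| \;\geq\; \tfrac{1}{(2(t+1))^{t+1}}\,r^{t+1}|A|.
\]
Substituting $|A|=|A'|/(r+1)$ and using $r^{t+1}/(r+1)\geq r^{t}/2$, which holds since the divisibility condition forces $r\geq 2^{t}(t+1)\geq 2$, yields the required lower bound $\tfrac{1}{2(2(t+1))^{t+1}}\,r^{t}|A'|$.

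There is no genuine obstacle here; the whole argument is a direct transfer of the profile-complexity lower bound through the standard padding gadget, and the main thing to verify carefully is that the gadget preserves the treewidth-$t$ property (which it does, as adding pendant paths only introduces bags of size $2$ in any tree decomposition).
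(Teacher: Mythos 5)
Your proposal is correct and is exactly the route the paper intends: the corollary is stated as following from Theorem~\ref{theorem:construction_bounded_treewidth} via the padding gadget of Lemma~\ref{lemma:from_balls_to_profiles}, and you have spelled out that deduction faithfully, including the treewidth check and the factor-of-two loss from $|A'|=(r+1)|A|$. (The only nit: $r=0$ is divisible by $2^t(t+1)$ but makes the claimed bound $0$, so it is trivial rather than excluded by your "forces $r\geq 2$" remark.)
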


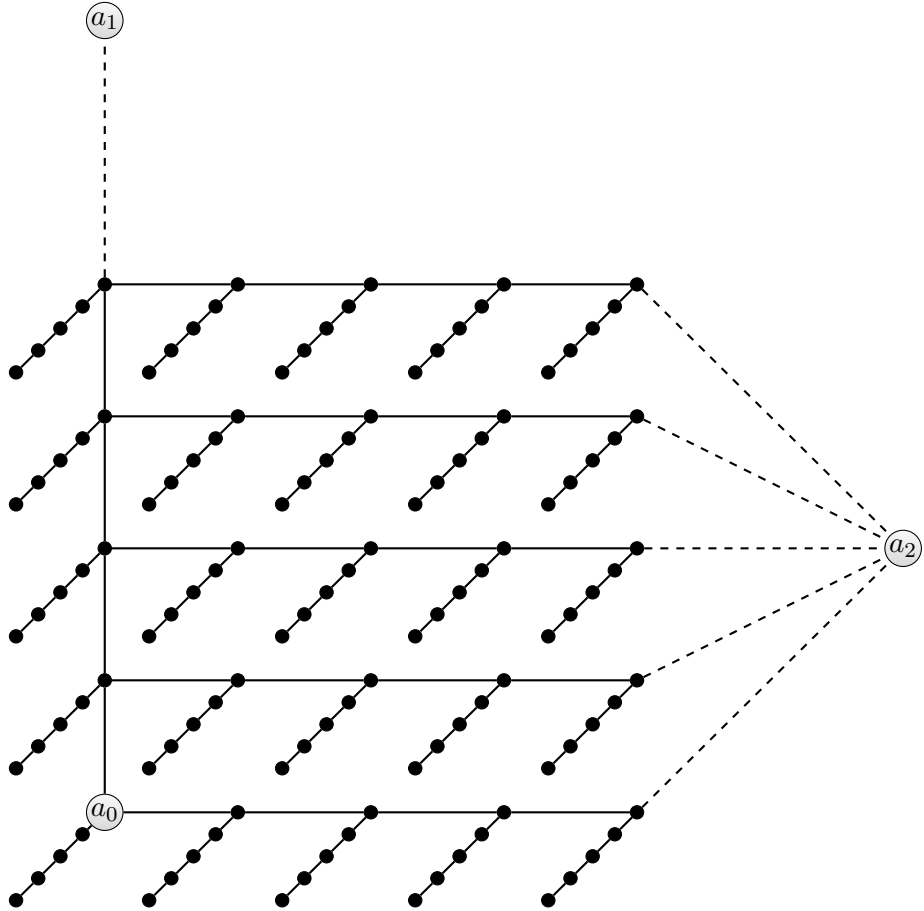
\begin{figure}[ht]
    \centering
    \begin{tikzpicture}[scale=1.75]
        \renewcommand\r{4} 

        \node (v000) at (0,0) [vertex] {$a_0$};
        \foreach \z[evaluate={\zm=int(\z-1);}] in {1,...,\r} 
        {
            \node (v00\z) at (-\z/\r/3*2, -\z/\r/3*2) [vertexverysmall]{};
            \draw[edge] (v00\zm) -- (v00\z);
        }
        \foreach \y[evaluate={\ym=int(\y-1);}] in {1,...,\r}
        {
            \node (v0\y0) at (0,\y) [vertexverysmall] {};
            \draw[edge] (v0\ym0) -- (v0\y0);
            \foreach \z[evaluate={\zm=int(\z-1);}] in {1,...,\r} 
            {
                \node (v0\y\z) at (-\z/\r/3*2, \y-\z/\r/3*2) [vertexverysmall]{};
                \draw[edge] (v0\y\zm) -- (v0\y\z);
            }
        }
        \foreach \x[evaluate={\xm=int(\x-1);}] in {1,...,\r}
        {
            \foreach \y in {0,...,\r}
            {
                \node (v\x\y0) at (\x,\y) [vertexverysmall] {};
                \draw[edge] (v\xm\y0) -- (v\x\y0);
                \foreach \z[evaluate={\zm=int(\z-1);}] in {1,...,\r} 
                {
                    \node (v\x\y\z) at (\x-\z/\r/3*2, \y-\z/\r/3*2) [vertexverysmall]{};
                    \draw[edge] (v\x\y\zm) -- (v\x\y\z);
                }
            }
        }


        \node (a1) at (0,1.5*\r) [vertex] {$a_1$};
        \draw[edge, dashed] (v0\r0) -- (a1);

        \node (a2) at (1.5*\r, \r/2) [vertex] {$a_2$};
        \foreach \y in {0,...,\r}
        {
            \draw[edge, dashed] (v\r\y0) -- (a2);
        }
    \end{tikzpicture}
    \caption{The construction of Theorem~\ref{theorem:construction_bounded_treewidth}
        for $t=2$ and $\ell=5$. The dashed edges represent paths of length $r/2$, and the central tree is a subgraph of an $\ell \times \ell \times \ell$-grid where $\ell = \frac{r}{2(t+1)}$.}
    \label{fig:construction_bounded_treewidth}
\end{figure}

\subsection{\texorpdfstring{$1$}{1}-planar graphs}
Given a nonnegative integer $k$, a graph $G$ is said to be \emph{$k$-planar} if it can be drawn in the plane so that
every edge of $G$ crosses at most $k$ other edges. Note that $0$-planar graphs are
exactly planar graphs.
However, for $k\geq 1$, the class of $k$-planar excludes no graph as a minor.

In this subsection, we show that there is no polynomial function $f$ such that the neighborhood complexity of every $1$-planar graph on a set $A$ of vertices at distance $r$ is at most $f(r)|A|$.
This also shows that the existence of a product structure is not enough to guarantee polynomial neighborhood complexity, despite Theorem~\ref{theorem:guarding_product_general_form} 
which reduces to the case where $|A|$ is bounded by a polynomial in $r$.
Indeed, Dujmović, Morin and Wood~\cite{dujmovic_graph} proved that every $k$-planar graph is a subgraph of $H \boxtimes P \boxtimes K_{18k^2+48k+30}$ for some path $P$, and some graph $H$ of treewidth at most $\binom{k+4}{3}-1$.

\begin{theorem}\label{theorem:construction_1_planar}
For every positive integer $r$ such that $r+1$ is a perfect square, there exists a $1$-planar
graph $G$ and $A \subseteq V(G)$ such that $|\{N^r[v] \cap A \mid v \in V(G)\}|
\geq \frac{2^{\sqrt{r+1}}}{\sqrt{r+1}}|A|$.
\end{theorem}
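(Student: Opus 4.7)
The plan is to construct, for each $r$ with $k := \sqrt{r+1}$ an integer, a $1$-planar graph $G$ and a set $A$ such that the number of distinct intersections $N^r[v] \cap A$ is at least $(2^k/k)\,|A|$. The strategy is to design a ``selector gadget'' $H$: a $1$-planar graph containing $k$ special vertices $A_H = \{a_1, \dots, a_k\}$, together with about $2^k$ further vertices $\{v_x : x \in \{0,1\}^k\}$ realizing pairwise distinct $r$-neighborhoods $N^r[v_x] \cap A_H = \{a_i : x_i = 1\}$. This alone yields the ratio $2^k / k$ on a single gadget, and for larger $|A|$ one simply takes disjoint copies of $H$: taking $m$ copies gives $|A| = mk$ vertices and at least $m \cdot 2^k = (2^k/k)|A|$ distinct intersections, matching the desired bound.

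Combinatorially, the gadget attaches to each $a_i$ a ``wire'' $P_i$ of length $r$ (a path $a_i = u_{i,0}, u_{i,1}, \dots, u_{i,r}$), and joins each $v_x$ to $a_i$ through $P_i$ by a path of length at most $r$ exactly when $x_i = 1$. If the wires $P_i$ are vertex-disjoint apart from their endpoints $a_i$, and each $v_x$ only reaches $a_i$'s through its designated wires, then $\dist_G(v_x, a_i) \leq r$ precisely when $x_i = 1$, giving the required profile $N^r[v_x]\cap A_H = \{a_i : x_i = 1\}$. The budget $r = k^2 - 1$ is exactly what is needed to accommodate routing paths of total length $O(k^2)$ required by the construction.

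The main obstacle is realizing the gadget as a $1$-planar graph. Naively there are about $k\cdot 2^{k-1}$ selector-to-wire incidences, far exceeding the $\bigO(n)$ edge bound for $1$-planar graphs. To control crossings, the selectors are organized along a balanced binary tree of depth $k$, with the $i$-th branching level determining the bit $x_i$; all selectors with $x_i = 1$ share the same initial ``tap'' into $P_i$ and diverge only further down the tree. By drawing the binary tree and the wires in separate regions of the plane and confining crossings to dedicated ``crossing boxes'' each involving exactly two sub-edges, one verifies by induction on $k$ that the resulting drawing is $1$-planar. Once $1$-planarity and the distance conditions are verified, the $\bigOmega(2^k/k)$ realized intersections per gadget, concatenated across disjoint copies, give the stated lower bound.
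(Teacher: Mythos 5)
Your high-level plan---shatter a set $A$ of size $k=\sqrt{r+1}$ using $2^k$ ``selector'' vertices, route them through binary trees, and subdivide edges so that each crossing is absorbed by its own sub-edge---is the same strategy as the paper's. But the gadget you describe has a genuine gap, in fact two. First, the routing is not well defined: you say ``all selectors with $x_i=1$ share the same initial tap into $P_i$,'' yet for $i\ge 2$ those selectors are scattered over $2^{i-1}$ different subtrees of your single balanced tree, so they cannot share one tap; the construction is never pinned down enough to check. Second, and more seriously, because your selectors are leaves of one shared tree, two selectors $v_x,v_y$ differing in a single bit are at distance $2$ from each other, hence $|\dist(v_x,a_i)-\dist(v_y,a_i)|\le 2$ for every $i$. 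The threshold $r$ must therefore split designated from non-designated distances inside a window of width $2$, and you neither calibrate the wire/tap lengths to achieve this nor rule out shortcuts in which a selector with $x_i=0$ reaches $a_i$ by hopping through a sibling's routing path. Together with the fact that the $1$-planarity claim is deferred to an unspecified ``induction on $k$ with crossing boxes,'' the two substantive verifications of the theorem are both asserted rather than proved.

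The paper's construction sidesteps the calibration problem entirely by keeping the selectors pairwise far apart: it takes $\ell=\sqrt{r+1}$ \emph{disjoint} complete binary trees $T_1,\dots,T_\ell$, each of depth $\ell$ and rooted at $a_i$, with the $2^\ell$ leaves of $T_i$ labelled $a_i^X$ for $X\subseteq[\ell]$; the selector $v_X$ is a new vertex adjacent exactly to the leaves $a_i^X$ with $i\in X$. Then $\dist(v_X,a_i)=\ell+1$ when $i\in X$, while any route to $a_i$ with $i\notin X$ must exit one tree through another selector and costs at least $\ell+5$, so the separation is robust. Drawing the $\ell$ trees in parallel makes every edge cross at most $\ell-1$ others, and subdividing each edge $\ell-2$ times yields a $1$-planar graph in which all distances scale by $\ell-1$, giving the radius $(\ell+1)(\ell-1)=r$ exactly. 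If you want to salvage your single-tree gadget you would have to supply the explicit tap positions and wire lengths, verify the $\pm 2$ margin for every pair $(v_x,a_i)$, and give the actual $1$-planar drawing; adopting the disjoint-trees layout is considerably simpler.
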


\begin{proof}
Let $\ell = \sqrt{r+1}$, and let $A = \{a_1, \dots, a_\ell\}$ be a set of $\ell$ vertices. 
For every $i \in [\ell]$, we consider a complete binary tree $T_i$ of depth $\ell$, and
with leaves $\{a_i^X\}_{X \subseteq [\ell]}$ (this indexing is possible because
a complete binary tree of depth $h$ has $2^h$ leaves).
For every $X \subseteq [\ell]$, we add a vertex $v_X$ with neighborhood
$N(v_X) = \{a_i^X \mid i \in X\}$.
This gives an $(\ell-1)$-planar graph $G_\ell$, see Figure~\ref{fig:construction_1_planar}.
Moreover, $\dist(a_i^X,a_i)=\ell$ for every $i \in [\ell]$ and $X \subseteq [\ell]$,
hence $\dist(v_X,a_i) \leq \ell+1$ if and only if $i \in X$.
It follows that for every $X\subseteq [\ell]$, $N^{\ell+1}[v_X] \cap A = \{a_i \mid i \in X\}$,
and so $|\{N^{\ell+1}[v] \cap A \mid v \in V(G_\ell)\}| \geq 2^\ell$.

To convert this graph $G_\ell$ into a $1$-planar graph, we subdivide every edge
$\ell-2$ times, and the new distance is now $(\ell+1)(\ell-1)=r$. 
Therefore, we obtain a $1$-planar graph $G$ such that 
$|\{N^r[v] \cap A \mid v \in V(G)\}| \geq 2^\ell =
\frac{2^{\sqrt{r+1}}}{\sqrt{r+1}}|A|$.
\end{proof}

Observe that in this construction, $A$ is shattered by the balls of radius $r$.
Hence this shows that there are $1$-planar graphs whose hypergraph of $r$-balls
has VC-dimension at least $\sqrt{r+1}$, whereas for $K_t$-minor-free graphs
this hypergraph has VC-dimension at most $t-1$ (which is independent of $r$)
by Theorem~\ref{theorem:VC_dim_Kt_minor_free}.




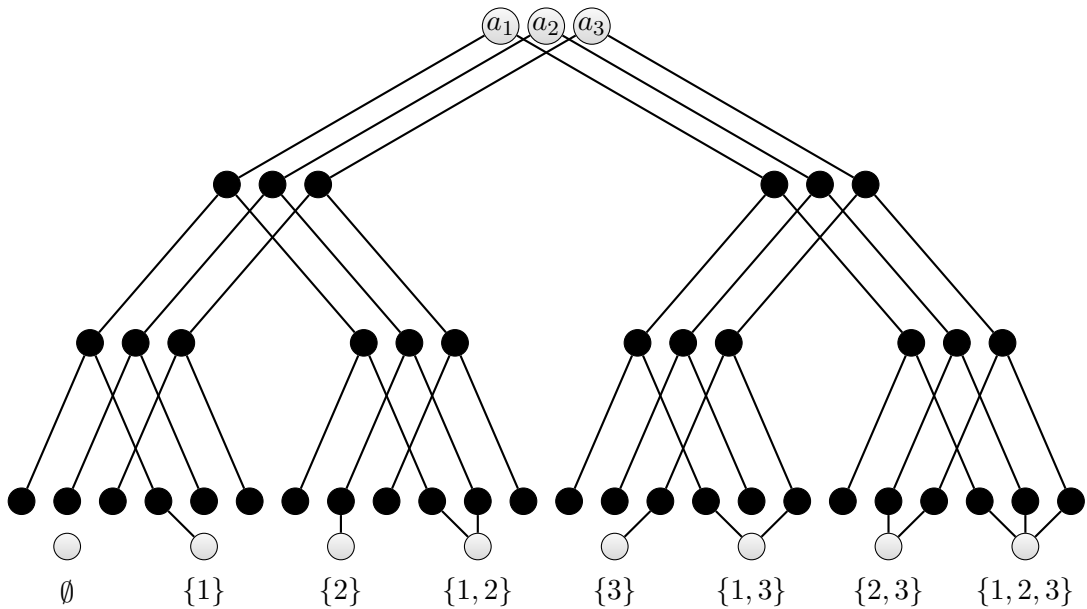
\begin{figure}[ht]
    \centering
    \begin{tikzpicture}[scale=0.6]
        \newcommand\deltay{3.5}
        \node (a1) at (-1,0) [vertex] {$a_1$};
        \node (a2) at (0,0) [vertex] {$a_2$};
        \node (a3) at (+1,0) [vertex] {$a_3$};
        
        \node (a10) at ($(a1)+(-6,-\deltay)$) [vertexsmall] {};
        \node (a20) at ($(a2)+(-6,-\deltay)$) [vertexsmall] {};
        \node (a30) at ($(a3)+(-6,-\deltay)$) [vertexsmall] {};
        \draw[edge] (a1) -- (a10);
        \draw[edge] (a2) -- (a20);
        \draw[edge] (a3) -- (a30);
            \node (a100) at ($(a10)+(-3,-\deltay)$) [vertexsmall] {};
            \node (a200) at ($(a20)+(-3,-\deltay)$) [vertexsmall] {};
            \node (a300) at ($(a30)+(-3,-\deltay)$) [vertexsmall] {};
            \draw[edge] (a10) -- (a100);
            \draw[edge] (a20) -- (a200);
            \draw[edge] (a30) -- (a300);
                \node (a1000) at ($(a100)+(-1.5,-\deltay)$) [vertexsmall] {};
                \node (a2000) at ($(a200)+(-1.5,-\deltay)$) [vertexsmall] {};
                \node (a3000) at ($(a300)+(-1.5,-\deltay)$) [vertexsmall] {};
                \draw[edge] (a100) -- (a1000);
                \draw[edge] (a200) -- (a2000);
                \draw[edge] (a300) -- (a3000);
                
                \node (a1001) at ($(a100)+(1.5,-\deltay)$) [vertexsmall] {};
                \node (a2001) at ($(a200)+(1.5,-\deltay)$) [vertexsmall] {};
                \node (a3001) at ($(a300)+(1.5,-\deltay)$) [vertexsmall] {};
                \draw[edge] (a100) -- (a1001);
                \draw[edge] (a200) -- (a2001);
                \draw[edge] (a300) -- (a3001);
            \node (a101) at ($(a10)+(3,-\deltay)$) [vertexsmall] {};
            \node (a201) at ($(a20)+(3,-\deltay)$) [vertexsmall] {};
            \node (a301) at ($(a30)+(3,-\deltay)$) [vertexsmall] {};
            \draw[edge] (a10) -- (a101);
            \draw[edge] (a20) -- (a201);
            \draw[edge] (a30) -- (a301);
                \node (a1010) at ($(a101)+(-1.5,-\deltay)$) [vertexsmall] {};
                \node (a2010) at ($(a201)+(-1.5,-\deltay)$) [vertexsmall] {};
                \node (a3010) at ($(a301)+(-1.5,-\deltay)$) [vertexsmall] {};
                \draw[edge] (a101) -- (a1010);
                \draw[edge] (a201) -- (a2010);
                \draw[edge] (a301) -- (a3010);
                
                \node (a1011) at ($(a101)+(1.5,-\deltay)$) [vertexsmall] {};
                \node (a2011) at ($(a201)+(1.5,-\deltay)$) [vertexsmall] {};
                \node (a3011) at ($(a301)+(1.5,-\deltay)$) [vertexsmall] {};
                \draw[edge] (a101) -- (a1011);
                \draw[edge] (a201) -- (a2011);
                \draw[edge] (a301) -- (a3011);
        
        \node (a11) at ($(a1)+(6,-\deltay)$) [vertexsmall] {};
        \node (a21) at ($(a2)+(6,-\deltay)$) [vertexsmall] {};
        \node (a31) at ($(a3)+(6,-\deltay)$) [vertexsmall] {};
        \draw[edge] (a1) -- (a11);
        \draw[edge] (a2) -- (a21);
        \draw[edge] (a3) -- (a31);
            \node (a110) at ($(a11)+(-3,-\deltay)$) [vertexsmall] {};
            \node (a210) at ($(a21)+(-3,-\deltay)$) [vertexsmall] {};
            \node (a310) at ($(a31)+(-3,-\deltay)$) [vertexsmall] {};
            \draw[edge] (a11) -- (a110);
            \draw[edge] (a21) -- (a210);
            \draw[edge] (a31) -- (a310);
                \node (a1100) at ($(a110)+(-1.5,-\deltay)$) [vertexsmall] {};
                \node (a2100) at ($(a210)+(-1.5,-\deltay)$) [vertexsmall] {};
                \node (a3100) at ($(a310)+(-1.5,-\deltay)$) [vertexsmall] {};
                \draw[edge] (a110) -- (a1100);
                \draw[edge] (a210) -- (a2100);
                \draw[edge] (a310) -- (a3100);
                
                \node (a1101) at ($(a110)+(1.5,-\deltay)$) [vertexsmall] {};
                \node (a2101) at ($(a210)+(1.5,-\deltay)$) [vertexsmall] {};
                \node (a3101) at ($(a310)+(1.5,-\deltay)$) [vertexsmall] {};
                \draw[edge] (a110) -- (a1101);
                \draw[edge] (a210) -- (a2101);
                \draw[edge] (a310) -- (a3101);
            \node (a111) at ($(a11)+(3,-\deltay)$) [vertexsmall] {};
            \node (a211) at ($(a21)+(3,-\deltay)$) [vertexsmall] {};
            \node (a311) at ($(a31)+(3,-\deltay)$) [vertexsmall] {};
            \draw[edge] (a11) -- (a111);
            \draw[edge] (a21) -- (a211);
            \draw[edge] (a31) -- (a311);
                \node (a1110) at ($(a111)+(-1.5,-\deltay)$) [vertexsmall] {};
                \node (a2110) at ($(a211)+(-1.5,-\deltay)$) [vertexsmall] {};
                \node (a3110) at ($(a311)+(-1.5,-\deltay)$) [vertexsmall] {};
                \draw[edge] (a111) -- (a1110);
                \draw[edge] (a211) -- (a2110);
                \draw[edge] (a311) -- (a3110);
                
                \node (a1111) at ($(a111)+(1.5,-\deltay)$) [vertexsmall] {};
                \node (a2111) at ($(a211)+(1.5,-\deltay)$) [vertexsmall] {};
                \node (a3111) at ($(a311)+(1.5,-\deltay)$) [vertexsmall] {};
                \draw[edge] (a111) -- (a1111);
                \draw[edge] (a211) -- (a2111);
                \draw[edge] (a311) -- (a3111);
    
    \node (v000) at ($(a2000)+(0,-1)$) [vertex] {};
    \node at ($(v000) + (0,-1)$) {$\emptyset$};

    \node (v001) at ($(a2001)+(0,-1)$) [vertex] {};
    \draw[edge] (v001) -- (a1001);
    \node at ($(v001) + (0,-1)$) {$\{1\}$};

    \node (v010) at ($(a2010)+(0,-1)$) [vertex] {};
    \draw[edge] (v010) -- (a2010);
    \node at ($(v010) + (0,-1)$) {$\{2\}$};

    \node (v011) at ($(a2011)+(0,-1)$) [vertex] {};
    \draw[edge] (v011) -- (a1011);
    \draw[edge] (v011) -- (a2011);
    \node at ($(v011)+(0,-1)$) {$\{1,2\}$};

    \node (v100) at ($(a2100)+(0,-1)$) [vertex] {};
    \draw[edge] (v100) -- (a3100);
    \node at ($(v100) + (0,-1)$) {$\{3\}$};

    \node (v101) at ($(a2101)+(0,-1)$) [vertex] {};
    \draw[edge] (v101) -- (a1101);
    \draw[edge] (v101) -- (a3101);
    \node at ($(v101)+(0,-1)$) {$\{1,3\}$};

    \node (v110) at ($(a2110)+(0,-1)$) [vertex] {};
    \draw[edge] (v110) -- (a2110);
    \draw[edge] (v110) -- (a3110);
    \node at ($(v110)+(0,-1)$) {$\{2,3\}$};

    \node (v111) at ($(a2111)+(0,-1)$) [vertex] {};
    \draw[edge] (v111) -- (a1111);
    \draw[edge] (v111) -- (a2111);
    \draw[edge] (v111) -- (a3111);
    \node at ($(v111)+(0,-1)$) {$\{1,2,3\}$};
    \end{tikzpicture}
    \caption{The graph $G_\ell$ of Theorem~\ref{theorem:construction_1_planar}
    for $\ell=3$. This graph is $(\ell-1)$-planar and has neighborhood complexity on
    $A=\{a_1,a_2,\dots, a_\ell\}$ at distance $\ell+1$ at least $2^\ell/\ell$. }
    \label{fig:construction_1_planar}
\end{figure}

\section{Application to metric dimension}
\label{sec:metric_dimension}

In a connected graph $G$, a \emph{resolving set} $S$ is a set of vertices such that no two vertices have the same profile on $S$ at distance $\diam(G)$, where $\diam(G)$ denotes the diameter of $G$.
The \emph{metric dimension} of $G$ is the smallest size of a resolving set. 
Clearly, if $G$ has metric dimension $k$ and diameter $d$, then the number $n$ of vertices of $G$ is bounded as follows:
\[
n\leq d^k+k.
\]
In~\cite{beaudou_bounding_2018}, the authors improved this upper bound for several particular graph classes.
\begin{theorem}[Beaudou, Foucaud, Dankelmann, Henning, Mary, and Parreau~\cite{beaudou_bounding_2018}]
Let $G$ be an $n$-vertex graph with diameter $d$ and metric dimension $k$.
\begin{enumerate}
    \item If $G$ is a tree, then $n \in \bigO(d^2k)$,
    \item if $G$ is $K_t$-minor free, then $n \in \bigO((dk)^{t-1})$,
    \item if $G$ has treewidth $t$, then $n \in \bigO(d^{3t+3} k)$.
\end{enumerate}
\end{theorem}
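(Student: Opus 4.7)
The plan is to derive all three bounds uniformly by reducing the problem to bounding profile complexity, and then applying the profile complexity results established earlier in the paper. The central observation is that if $S$ is a resolving set of size $k$ in a connected graph $G$ of diameter $d$, then by definition of a resolving set the map $v \mapsto \pi_{d,G}[v \to S]$ is injective on $V(G)$; moreover, since $G$ has diameter at most $d$, no vertex has the ``all $+\infty$'' profile on $S$. Therefore
\[
n = |V(G)| \leq |\Pi_{d,G}[V(G) \to S]| + 1,
\]
and from this point each part becomes an immediate application of the corresponding profile complexity bound with $A = S$ and $r = d$.

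For part~(ii), on a $K_t$-minor-free graph $G$, Corollary~\ref{corollary:bound_polynomial_in_A_profiles} gives $|\Pi_{d,G}[V(G) \to S]| \leq (d+1)^{t-1} k^{t-1}$, hence $n \in \bigO((dk)^{t-1})$ as required. This step ultimately rests on the Bousquet--Thomass\'e VC-dimension bound (Theorem~\ref{theorem:VC_dim_Kt_minor_free}), the Sauer--Shelah lemma, and the ``balls-to-profiles'' trick of Lemma~\ref{lemma:from_balls_to_profiles}, all of which are already in place. For part~(iii), when $G$ has treewidth at most $t$, applying Corollary~\ref{corollary:NC_bounded_treewidth} in exactly the same way yields $n \in \bigO_t(d^{2t} k)$, which is strictly stronger than the stated $\bigO(d^{3t+3}k)$ but of course implies it. Part~(i) is then the special case $t=1$ and gives $n \in \bigO(d^2 k)$, matching the tree bound exactly.

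The argument contains essentially no new ingredients beyond what has already been developed in Sections~\ref{section:proper_minor_closed} and~\ref{section:bounded_treewidth}: the guarding-set construction via weak coloring numbers on a bounded-width tree decomposition, together with the Sauer--Shelah machinery for classes with bounded VC-dimension of balls, does all the real work. The only mildly delicate point is the ``$+1$'' in the reduction, handled by noting that the diameter assumption precludes the identically-$+\infty$ profile; I expect this to be the most harmless possible obstacle, and so the bulk of the ``proof'' is really the act of recognizing that a resolving set is precisely an instance of the profile complexity setup specialized to $r = \diam(G)$.
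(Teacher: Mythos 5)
Your reduction is correct and is exactly the paper's own approach: the statement you were asked to prove is actually quoted from Beaudou et al.\ as prior work, and the paper's contribution in Section~\ref{sec:metric_dimension} (Theorem~\ref{thm:metric_dimension}) is precisely the observation you make, namely that a resolving set $S$ forces $n \leq |\Pi_{d,G}[V(G) \to S]|$, after which the profile complexity bounds apply verbatim. Your derivation in fact recovers strictly stronger bounds for parts~(i) and~(iii) (as the paper's Theorem~\ref{thm:metric_dimension} does), and the handling of the excluded all-$+\infty$ profile via connectivity and the diameter bound is fine.
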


By observing that $n \leq |\Pi_{d,G}[V(G) \to S]|$ if $S$ is a resolving set, we obtain the following bounds.
\begin{theorem}
\label{thm:metric_dimension}
Let $G$ be a connected $n$-vertex graph with diameter $d$ and metric dimension $k$.
\begin{enumerate}
    \item If $G$ is in a fixed class $\mathcal{C}$ of bounded expansion, then $n \leq f(d)k$ for some function $f$ depending only on $\mathcal{C}$ (by~\cite{RSS19} and Lemma~\ref{lemma:from_balls_to_profiles}).
    \item If $G$ is planar, then $n \in \bigO(d^4k)$ (by Theorem~\ref{theorem:NC_planar_degree_4}).
    \item If $G$ has Euler genus $g$, then $n \in \bigO_g(d^4(d+k))$ (by Theorem~\ref{theorem:bounded_genus}).
    \item If $G$ is $K_t$-minor-free, then $n \in \bigO_t(d^{t^2-1} k)$ (by Theorem~\ref{thm:NC_Kt_Minor_free}).
    \item If $G$ has treewidth $t \geq 1$, then $n \in \bigO(d^{2t} k)$ (by Corollary~\ref{corollary:NC_bounded_treewidth}).
\end{enumerate}
\end{theorem}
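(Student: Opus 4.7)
The plan is to obtain each item as a direct corollary of the corresponding profile complexity bound already proved earlier in the paper, via a single elementary observation that converts any resolving set into a set on which profiles distinguish all vertices.

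The starting point is the inequality $n \leq |\Pi_{d,G}[V(G) \to S]|$ for any resolving set $S$, pointed out just before the statement. I would justify it as follows. Let $S$ be a resolving set of size $k$ in $G$. By the definition of metric dimension, the map $v \mapsto \pi_{d,G}[v \to S]$ is injective on $V(G)$. Moreover, since $G$ is connected with diameter $d$, we have $\dist_G(v,s) \leq d$ for every $v \in V(G)$ and every $s \in S$, so no vertex has the all-$+\infty$ profile on $S$. Hence all $n$ profiles lie in $\Pi_{d,G}[V(G) \to S]$, which gives the desired inequality.

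Given this reduction, items 2--5 are immediate: set $A := S$ so that $|A|=k$, and apply the relevant profile bound with $r := d$. Specifically, item 2 follows from Theorem~\ref{theorem:NC_planar_degree_4} (which yields $\bigO(d^4 k)$), item 3 from Theorem~\ref{theorem:bounded_genus} (which yields $\bigO_g(d^4(d+k))$), item 4 from Theorem~\ref{thm:NC_Kt_Minor_free} (which yields $\bigO_t(d^{t^2-1} k)$), and item 5 from Corollary~\ref{corollary:NC_bounded_treewidth} (which yields $\bigO_t(d^{2t} k)$). In each case the bound on $n$ follows by combining $n \leq |\Pi_{d,G}[V(G) \to S]|$ with the quoted estimate.

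For item 1, the only extra step is to translate the characterization of bounded expansion classes by Reidl, S\'anchez~Villaamil and Stravopoulos~\cite{RSS19}, which is phrased in terms of neighborhood complexity, into a profile bound using Lemma~\ref{lemma:from_balls_to_profiles}. Since the class $\mathcal{C}$ is a standard class of bounded expansion (and we may pass to its closure under adding pendant vertices if needed, preserving bounded expansion), Lemma~\ref{lemma:from_balls_to_profiles} provides a function $f'$ with $|\Pi_{d,G}[V(G) \to S]| \leq f'(d)\,k$, and the conclusion $n \leq f'(d) k$ follows. There is no real obstacle in the proof; the only mild subtlety is checking the hypotheses of Lemma~\ref{lemma:from_balls_to_profiles} in item 1, and noting the diameter argument that rules out the all-$+\infty$ profile so that our $\Pi_{d,G}$-based count actually dominates $n$.
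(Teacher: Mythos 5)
Your proposal is correct and matches the paper's argument exactly: the paper derives all five items from the single observation that $n \leq |\Pi_{d,G}[V(G) \to S]|$ for a resolving set $S$, then applies the corresponding profile-complexity bounds with $A=S$ and $r=d$. Your justification of that inequality (injectivity from the definition of resolving set, plus connectivity and diameter $d$ to exclude the all-$+\infty$ profile) and your handling of the hypotheses of Lemma~\ref{lemma:from_balls_to_profiles} in item 1 are exactly the details the paper leaves implicit.
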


\section{Conclusion}
\label{sec:conclusion}

We conclude the paper with two open problems. 
The first one is about getting a tight bound on the profile complexity of planar graphs, since the existing lower and upper bounds are tantalizing close to each other. 

\begin{problem}
Determine the profile and neighborhood complexities of planar graphs up to a constant factor. 
For profile complexity, it is known to be in $\Omega(r^3)$ (Theorem~\ref{theorem:construction_bounded_treewidth}) and in $\bigO(r^4)$ (Theorem~\ref{thm:profile_complexity_planar}).  
For neighborhood complexity, it is known to be in $\Omega(r^2)$ (Theorem~\ref{theorem:construction_bounded_treewidth} and Lemma~\ref{lemma:from_balls_to_profiles}) and in $\bigO(r^4)$ (Theorem~\ref{thm:profile_complexity_planar}).  
\end{problem}

The second open problem is about getting better bounds for $K_t$-minor-free graphs.  

\begin{problem}
Are the profile and neighborhood complexities of $K_t$-minor-free graphs in $r^{\bigO(t)}$?
\end{problem}

\section*{Acknowledgments}
We are grateful to Jacob Holme, Erik Jan van Leeuwen, and Marcin Pilipczuk for their feedback on an earlier version of the paper, which lead to the improved bound of $\bigO(r^4)$ on the neighborhood complexity of planar graphs presented in Section~\ref{section:planar}.  
We also thank Stéphan Thomassé for helpful discussions, and the two anonymous referees for their very helpful remarks on an earlier version of the paper.

\bibliographystyle{abbrv}
\bibliography{biblio}

\end{document}